\let\origsection=\section \def\section{\@ifstar{\origsection*}{\mysection}} 
\def\mysection{\@startsection{section}{1}\z@{.7\linespacing\@plus\linespacing}{.5\linespacing}{\normalfont\scshape\centering\S}}
\renewcommand{\PrintDOI}[1]{\doi{#1}}
\numberwithin{equation}{section}
\numberwithin{figure}{section}
\let\polishlcross=\l
\def\l{\ifmmode\ell\else\polishlcross\fi}
\def\paragraph#1{%
  \noindent\textbf{#1.}\enspace}
\let\emptyset=\varnothing
\let\setminus=\smallsetminus
\def\moverlay{\mathpalette\mov@rlay}
\def\mov@rlay#1#2{\leavevmode\vtop{   \baselineskip\z@skip \lineskiplimit-\maxdimen
   \ialign{\hfil$\m@th#1##$\hfil\cr#2\crcr}}}
\newcommand{\charfusion}[3][\mathord]{
    #1{\ifx#1\mathop\vphantom{#2}\fi
        \mathpalette\mov@rlay{#2\cr#3}
      }
    \ifx#1\mathop\expandafter\displaylimits\fi}
\DeclareFontFamily{U}  {MnSymbolC}{}
\DeclareSymbolFont{MnSyC}         {U}  {MnSymbolC}{m}{n}
\DeclareFontShape{U}{MnSymbolC}{m}{n}{
    <-6>  MnSymbolC5
   <6-7>  MnSymbolC6
   <7-8>  MnSymbolC7
   <8-9>  MnSymbolC8
   <9-10> MnSymbolC9
  <10-12> MnSymbolC10
  <12->   MnSymbolC12}{}
\DeclareMathSymbol{\powerset}{\mathord}{MnSyC}{180}
\let\epsilon=\varepsilon
\let\rho=\varrho
\let\theta=\vartheta
\theoremstyle{plain}
\newtheorem{thm}{Theorem}[section]
\newtheorem{lemma}[thm]{Lemma}
\newtheorem{corollary}[thm]{Corollary}
\newtheorem{case}{Case}
\newtheorem{proposition}[thm]{Proposition}
\newtheorem{thm-intro}{Theorem}[]
\newtheorem*{claim*}{Claim}
\theoremstyle{definition}
\newtheorem{example}[thm]{Example}
\let\phi=\varphi
\newcommand{\no}[1]{}
\def\?#1{\vadjust{\vbox to 0pt{\vss\vskip-8pt\leftline{%
     \llap{\hbox{\vbox{\pretolerance=-1
     \doublehyphendemerits=0\finalhyphendemerits=0
     \hsize16truemm\tolerance=10000\small
     \lineskip=0pt\lineskiplimit=0pt
     \rightskip=0pt plus16truemm\baselineskip8pt\noindent
     \hskip0pt        %(without this, the first word is never hyphenated!)
     {\tiny #1 }\endgraf}\hskip7truemm}}}\vss}}}
\begin{document}

\author[K.~Heuer]{Karl Heuer}
\address{Karl Heuer, Institute of Software Engineering and Theoretical Computer Science, Technische Universit\"{a}t Berlin, Ernst-Reuter-Platz 7, 10587 Berlin, Germany}
\email{\tt karl.heuer@tu-berlin.de}

\author[D.~Sarikaya]{Deniz Sarikaya}
\address{Deniz Sarikaya, Department of Mathematics, University of Hamburg, Bundesstra{\ss}e 55, 20146 Hamburg, Germany}
\email{\tt deniz.sarikaya@uni-hamburg.de}

\title[]{Forcing Hamiltonicity in locally finite graphs via forbidden induced subgraphs I: nets and bulls}
\subjclass[2010]{05C63, 05C45, 05C75}
\keywords{Hamiltonicity, forbidden induced subgraphs, locally finite graphs, ends of infinite graphs, Freudenthal compactification}

\begin{abstract}
In a series of papers, of which this is the first, we study sufficient conditions for Hamiltonicity in terms of forbidden induced subgraphs and extend such results to locally finite infinite graphs.
For this we use topological circles within the Freudenthal compactification of a locally finite graph as infinite cycles.
In this paper we focus on conditions involving claws, nets and bulls as induced subgraphs.
We extend Hamiltonicity results for finite claw-free and net-free graphs by Shepherd to locally finite graphs.
Moreover, we generalise a classification of finite claw-free and net-free graphs by Shepherd to locally finite ones.
Finally, we extend to locally finite graphs a Hamiltonicity result by Ryj\'{a}\v{c}ek involving a relaxed condition of being bull-free.
\end{abstract}

\maketitle

\section{Introduction}\label{sec:Introduction}

The question whether certain graphs have a Hamilton cycle, i.e., a cycle through all vertices of the graph, and the search for necessary as well as sufficient conditions forcing Hamiltonicity is a prominent subject within graph theory.
Most results in this area focus on finite graphs, also for the reason that it is not clear what a Hamilton cycle in an infinite graphs should be.
Although two-way infinite paths might be the canonical choice for such objects, considering only them limits the class of potential Hamiltonian graphs immensely: only graphs with at most two \emph{ends} have a chance of being Hamiltonian.
The ends of a graph are the equivalence classes of \emph{rays}, i.e.~one-way infinite paths, under the relation of being inseparable by finitely many vertices.

Nevertheless, the study of Hamiltonicity has quite successfully been transferred to infinite graphs, especially for \emph{locally finite} ones, i.e.~graphs where each vertex has finite degree.
For a locally finite connected graph $G$ the Freudenthal compactification $|G|$ is considered.
This is a topological space arising from $G$ seen as a $1$-complex by adding additional points `at infinity'.
These additional points are precisely the ends of $G$ and the corresponding topology is defined in such a way that each ray converges to the end it is contained in.
For more on the space $|G|$ see~\cites{Diestel.Buch, Diestel.Arx, Freud-Equi}.
Following the topological approach by Diestel and K\"{u}hn~\cites{inf-cyc-1, inf-cyc-2}, the notion of cycles of a graph $G$ is extended to \emph{circles} in $|G|$, which are homeomorphic images of the unit circle $S^1 \subseteq \mathbb{R}^2$ in $|G|$.
This definition now allows a rather big variety of infinite cycles.
We call a circle a \textit{Hamilton circle} of $G$ if it contains all vertices of $G$.
Since Hamilton circles are closed subspaces of $|G|$, they also contain all ends of $G$.

Several Hamiltonicity results have been extended to locally finite infinite graphs so far, although not always completely, but with additional requirements~\cites{bruhn-HC, Chan.2015, Georgakopoulos.2009, Hamann.et.al.2016, Heuer.2015, Heuer.2016, Heuer.2018, Lehner.2014, Li-arx_1, Li-arx_2, Pitz.2018}.
For finite graphs many sufficient conditions guaranteeing Hamiltonicity exist which make use of global assumptions such as for example degree conditions involving the total number of vertices.
To locally finite infinite graphs, however, such conditions do not seem to be easily transferable.
For this reason we focus on sufficient conditions forcing Hamiltonicity with a local character in this series of papers, namely ones in terms of forbidden induced subgraphs.
The specific subgraphs we are focusing on in this first paper out of the series are the \emph{claw}, the \emph{net} and the \emph{bull}, which are depicted in Figure~\ref{fig:subgraphs}.
Specifically for the bull, we shall refer to its vertices $b_1, b_2$ of degree $1$ as the \emph{horns} of the bull.
In general, given two graphs $G$ and $H$ we shall call $G$ a \emph{$H$-free} graph if $G$ does not contain an induced subgraph isomorphic to~$H$.
So far sufficient conditions for Hamiltonicity in terms of forbidden induced conditions have not been analysed very much in the context of infinite graphs, although some results on claw-free graphs with additional constraints exist~\cites{Hamann.et.al.2016, Heuer.2015, Heuer.2016}.

\begin{figure}[htbp]
\centering
\includegraphics[width=13cm]{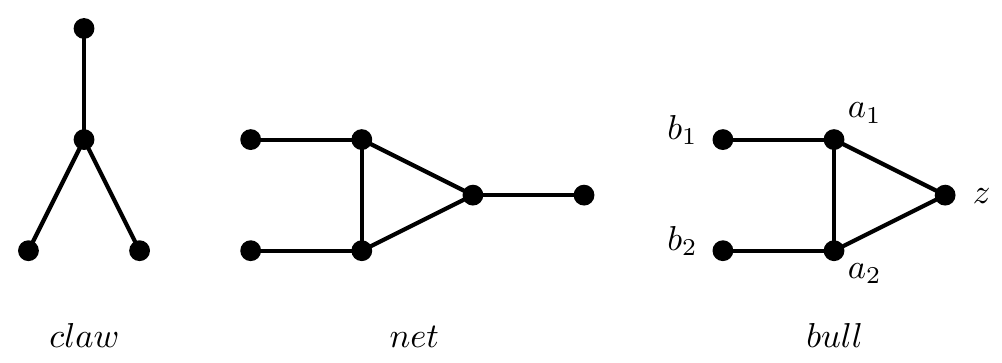}
\caption{The induced subgraphs considered in this paper.}
\label{fig:subgraphs}
\end{figure}

Our main results in this paper are centered around the following theorem by Shepherd.
In order to state it, we have to give one additional definition.
A finite graph $G$ is called \emph{$k$-leaf-connected} if $|V(G)| > k \in \mathbb{N}$ and given any vertex set $S \subseteq V(G)$ with $|S| = k$, then $G$ has a spanning tree whose set of leaves is precisely $S$.

\begin{restatable}{thm}{finShep}\cite{Shepherd.1991}*{Thm.\ 2.9}
\label{thm:finShep2.9}
Let $G$ be a finite graph. If $G$ is claw-free and net-free, then
\begin{enumerate}
	\item $G$ is connected implies $G$ has a Hamilton path. \label{thm:finShep2.9.1}
	\item $G$ is $2$-connected implies $G$ is Hamiltonian.  \label{thm:finShep2.9.2}
	\item For $k \geq 2, G$ is $(k+1)$-connected if and only if $G$ is $k$-leaf-connected.
	\label{thm:finShep2.9.3}
\end{enumerate}
\end{restatable}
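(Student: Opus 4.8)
The plan is to handle all three assertions through a single local mechanism that claw-freeness provides and that net-freeness sharpens. The starting observation is that if $G$ is claw-free then for every vertex $v$ the neighbourhood $N(v)$ contains no independent set of size three; equivalently, $N(v)$ is covered by at most two cliques. I would record this together with its consequence for extremal paths and cycles: if $P = x_0 x_1 \cdots x_\ell$ is a path and $z \notin V(P)$ is a neighbour of an interior vertex $x_i$ with $z \not\sim x_{i-1}$ and $z \not\sim x_{i+1}$, then claw-freeness at $x_i$ forces the chord $x_{i-1} x_{i+1} \in E(G)$. The role of net-freeness is to forbid the configuration in which three such pendant vertices hang off a common triangle, and this is exactly what will turn ``cannot extend'' into an outright contradiction.

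For the Hamilton-path statement I would take a longest path $P$ and suppose some vertex lies off it. Connectivity supplies a vertex $z \notin V(P)$ adjacent to $P$; maximality forbids $z$ from being adjacent to an endpoint, so $z \sim x_i$ for an interior $x_i$, and inserting $z$ shows at once that $z \not\sim x_{i-1}, x_{i+1}$, whence $x_{i-1} \sim x_{i+1}$ by the observation above. At boundary positions $i \in \{1, \ell-1\}$ this chord already yields a longer path directly; otherwise the triangle $x_{i-1} x_i x_{i+1}$, equipped with the pendants $x_{i-2}, z, x_{i+2}$, is completed after a case analysis on the remaining adjacencies into an induced net on $\{x_{i-2}, x_{i-1}, x_i, x_{i+1}, x_{i+2}, z\}$, contradicting net-freeness. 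Hence a longest path is Hamiltonian. The Hamiltonicity statement is the same argument run on a longest cycle instead of a longest path: an off-cycle vertex $z$ adjacent to a cycle vertex $x_i$ again produces the chord $x_{i-1} x_{i+1}$ and then an induced net, with $2$-connectivity taking over the role that connectivity played in the path case.

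For the leaf-connectivity equivalence I would split the two implications. The direction from $k$-leaf-connected to $(k+1)$-connected is structural and uses neither forbidden subgraph: in any tree the internal vertices induce a subtree, so whenever $S$ is the leaf set of a spanning tree, $G - S$ is connected. Thus a separating set of size at most $k$ could be enlarged to a $k$-set $S$ keeping two components of $G - S$ nonempty (here one needs $|V(G)| \geq k+2$), and then no spanning tree could have leaf set exactly $S$ -- a contradiction. For the converse, assume $G$ is $(k+1)$-connected and fix a $k$-set $S$. Now $G - S$ is automatically connected, and since $\delta(G) \geq k+1$ each $s \in S$ has at least two neighbours outside $S$; the problem reduces to selecting a spanning tree $T_0$ of $G - S$ and attaching every $s \in S$ as a leaf so that \emph{precisely} the vertices of $S$ become leaves, i.e.\ so that every leaf of $T_0$ receives some $s$ as a child. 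I would run this as an induction on $k$ with the Hamilton-connected form of the Hamiltonicity statement ($k = 2$) as base case, designating one extra leaf at each step while preserving claw-freeness, net-freeness and the connectivity margin.

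The main obstacle is twofold. For the first two statements it is the adjacency bookkeeping that certifies an induced net: one must rule out the potential edges $x_{i-2} x_i$, $x_{i+2} x_i$, $x_{i-2} x_{i+1}$, $x_{i-1} x_{i+2}$, $x_{i-2} x_{i+2}$ and the edges from $z$ to $x_{i \pm 2}$, discharging each surviving adjacency either by a lengthening that contradicts maximality or by a further appeal to claw-freeness. For the leaf-connectivity equivalence the delicate point is the converse: connectivity hands us a connected $G - S$ for free, but forcing the leaf set to be \emph{exactly} $S$ rather than a proper superset is where the combined claw-free and net-free structure is genuinely needed, and arranging the inductive reduction so that all three hypotheses survive simultaneously is the crux.
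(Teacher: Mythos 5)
First, a point of orientation: the paper does not prove this theorem at all --- it is quoted from Shepherd, and the route the paper actually relies on (both here and for the infinite analogues) is the structural characterisation via minimal separators and distance-$2$-completeness, Theorem~\ref{finShep2.1}. Your plan for parts~(\ref{thm:finShep2.9.1}) and~(\ref{thm:finShep2.9.2}) is instead the extremal longest-path/longest-cycle argument, which is essentially the original proof of Duffus, Gould and Jacobson that the paper cites for exactly these two parts; this is a legitimate alternative that avoids any separator analysis. Be aware, though, that the ``case analysis on the remaining adjacencies'' you defer is the whole proof: an edge such as $x_{i-2}x_i$ is not always discharged by a single lengthening or a single claw --- it typically forces a further chord via claw-freeness, a rerouting of the path, and then a second round of the same analysis on the rerouted path --- so as written this portion is a plan rather than a proof.

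The genuine gap is in the converse of part~(\ref{thm:finShep2.9.3}). Your reduction to a base case at $k=2$ is sound in outline, and in fact the inductive step is easier than the ``attach every $s\in S$ so that every leaf of $T_0$ is covered'' mechanism you describe (which is circular, since it requires controlling the leaves of $T_0$ in advance): simply delete one designated vertex $s\in S$, observe that $G-s$ remains sufficiently connected, claw-free and net-free, obtain by induction a spanning tree of $G-s$ with leaf set $S\setminus\{s\}$, and reattach $s$ as a pendant at one of its neighbours that is internal in that tree --- such a neighbour exists because $s$ has more neighbours than there are leaves. But the base case, namely that every $3$-connected claw-free and net-free graph is Hamilton-connected, is itself a substantive theorem. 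It does not follow from part~(\ref{thm:finShep2.9.2}), and it is not delivered by your longest-path machinery, because a longest $u$--$v$ path with prescribed endpoints cannot be extended at its ends, so the maximality arguments you lean on (no neighbour of $z$ at an endpoint, no two consecutive neighbours of $z$ on the path) break down. You either need a separate extremal argument for paths with prescribed ends or Shepherd's structural theorem; as it stands the proposal assumes its own base case. (A minor boundary issue you correctly flag: the forward direction needs $|V(G)|\ge k+2$; the complete graph $K_{k+1}$ shows this case must be excluded by the connectivity convention.)
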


Note that statement~(\ref{thm:finShep2.9.1}) and (\ref{thm:finShep2.9.2}) of Theorem~\ref{thm:finShep2.9} were already proven in~\cite{Duffus.et.al.1981}.

We shall extend all three statements of this theorem to infinite locally finite graphs.
Before that, we analyse the structure of infinite locally finite claw-free and net-free graphs, and give examples of such graphs in~Section~\ref{sec:Structure}.
Especially, we shall prove that such graphs have at most two ends.

In contrast to this, we consider locally finite graphs with potentially up to $2^{\aleph_0}$ many ends in the second paper of this series~\cite{HC_sub_2}, where we focus on the \emph{paw}, i.e. the graph obtained by attaching an edge to a triangle, and a slightly relaxed forbidden induced subgraph condition.

Regarding the first two statements of Theorem~\ref{thm:finShep2.9} we shall prove the following theorems.

\begin{restatable}{thm}{infShepOne}\label{infShep2.9.i}
For an infinite locally finite connected graph $G$ that is claw-free and net-free, precisely one of the following statements holds:
\begin{enumerate}
    \item $G$ has only one end and admits a spanning ray.
    \item $G$ has only two ends and admits a spanning double ray.
\end{enumerate}
\end{restatable}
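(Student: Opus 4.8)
The plan is to reduce the Hamiltonicity statement to the finite case Theorem~\ref{thm:finShep2.9}(\ref{thm:finShep2.9.1}) by a compactness argument, and to use the end structure established in Section~\ref{sec:Structure} both to separate the two alternatives and to control the limiting object.

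First I would settle the exclusivity. By the structural results of Section~\ref{sec:Structure}, $G$ has at most two ends, and being infinite, connected and locally finite it has at least one; hence $G$ has exactly one or exactly two ends. Moreover, if $G$ had a spanning ray $R = v_0 v_1 v_2 \cdots$, then deleting any finite initial segment $\{v_0, \dots, v_{k-1}\}$ would leave the connected tail $v_k v_{k+1} \cdots$, which omits only finitely many vertices; so $G - S$ would have a single infinite component for every finite $S \subseteq V(G)$, i.e.\ $G$ would be one-ended. Thus a spanning ray is incompatible with having two ends, and the two alternatives are mutually exclusive. It therefore remains to produce a spanning ray when $G$ is one-ended and a spanning double ray when $G$ is two-ended.

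For the construction I would fix an exhaustion $V_1 \subsetneq V_2 \subsetneq \cdots$ with $\bigcup_n V_n = V(G)$ such that each $G_n := G[V_n]$ is finite and connected, which is possible since $G$ is connected and locally finite. Being induced subgraphs, the $G_n$ are again claw-free and net-free, so Theorem~\ref{thm:finShep2.9}(\ref{thm:finShep2.9.1}) supplies a Hamilton path $P_n$ of each $G_n$. I would then run a K\"onig-type compactness argument: for every fixed $m$ the trace $E(P_n) \cap E(G_m)$ is a linear forest on $V(G_m)$, and there are only finitely many such forests, so the Infinity Lemma yields a coherent nested family whose union is a spanning subgraph $H \subseteq G$ with maximum degree at most $2$ and without finite cycles. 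Consequently every component of $H$ is a (possibly trivial) finite path, a ray, or a double ray, and the numbers of degree-$1$ vertices and of infinite components of $H$ are governed by the behaviour of the endpoints of the $P_n$ along the extracting subsequence.

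The crux is to show that $H$ is a single ray or double ray of the type dictated by the number of ends, and this is where the endpoints of the $P_n$ must be tracked carefully. In the two-ended case I would fix a finite separator $S$ with $G - S$ having exactly two infinite components $C_1, C_2$ and argue that, after passing to the subsequence, the two endpoints of $P_n$ are eventually driven into $C_1$ and $C_2$ and escape every finite set; then no vertex of $H$ has degree less than $2$, so $H$ consists of double rays, and using that $G$ has only two ends together with the spanning property I would rule out a splitting into several double rays, leaving a single spanning double ray whose tails converge to the two distinct ends. I expect the one-ended case to be the main obstacle: here I must force exactly one endpoint of $P_n$ to remain bounded, so that $H$ acquires a single degree-$1$ vertex and becomes a ray rather than a double ray. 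This endpoint anchoring does not follow from Theorem~\ref{thm:finShep2.9}(\ref{thm:finShep2.9.1}) by itself; I would resolve it either via the structural description of one-ended claw-free and net-free graphs obtained in Section~\ref{sec:Structure}, or by arranging a prescribed start vertex to be realisable as an endpoint of each $P_n$ (for instance by invoking Theorem~\ref{thm:finShep2.9}(\ref{thm:finShep2.9.2}) on suitably chosen $2$-connected pieces and cutting the resulting Hamilton cycle), after which the free endpoint is pushed out to the unique end. The same end structure would then also be used to guarantee that the limit $H$ is connected.
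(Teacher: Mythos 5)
Your handling of the exclusivity (at most two ends via Lemma~\ref{lem:2ends:net}, and a spanning ray forcing one-endedness) is fine, but the core of your construction has a genuine gap: the K\H{o}nig-lemma limit $H$ of the Hamilton paths $P_n$ is a spanning subgraph of maximum degree at most $2$ without finite cycles, and nothing in your argument controls its number of components. The trace $E(P_n)\cap E(G_m)$ may be a linear forest with arbitrarily many components, since $P_n$ may enter and leave $V_m$ many times, and this is inherited by the limit; $H$ can perfectly well be a disjoint union of many finite paths, rays or double rays. Having only one or two ends does not exclude this --- the $2$-blow-up of a double ray is two-ended, claw- and net-free, and is spanned by two disjoint double rays --- so ``using that $G$ has only two ends together with the spanning property'' cannot rule out a splitting. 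Controlling the crossings is precisely the hard part: in the proof of Theorem~\ref{infShep2.9.ii} the authors must arrange $|E(C'_n)\cap\delta(R_n)|=2$ and force consecutive cycles to agree on $G_{n-1}$ before passing to a limit; an uncoordinated sequence of Hamilton paths need not converge to anything connected. Your fallback for the one-ended case (cutting a Hamilton cycle of a suitable piece at a prescribed vertex $v_0$) makes matters worse rather than better: the resulting paths end at a neighbour of $v_0$, so along any subsequence the limit has two degree-$1$ vertices inside the finite set $\{v_0\}\cup N(v_0)$, and a connected acyclic graph of maximum degree $2$ with two leaves is a finite path --- hence $H$ would be disconnected, not a spanning ray.

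The paper's proof takes an entirely different, constructive route: it splits into cases according to whether $G$ is not $2$-connected, is $3$-connected, or is $2$- but not $3$-connected, and in each case uses the distance-$2$-completeness supplied by Theorem~\ref{thm:infShep2.1} (plus, in the $3$-connected case, the topological $2$-leaf-connectedness from Theorem~\ref{infShep2.9.iii}) to thread a spanning ray or double ray explicitly through the distance classes, with full control of where it starts and how it leaves each finite layer. To salvage a compactness proof you would first have to show that the finite Hamilton paths can be chosen to cross each cut $\delta(R_n)$ a bounded (indeed minimal) number of times and to stabilise on each $G_m$, which in effect forces you back to exactly this structure theory.
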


\begin{restatable}{thm}{infShepTwo}\label{infShep2.9.ii}
Every locally finite, $2$-connected claw-free and net-free graph is Hamiltonian.
\end{restatable}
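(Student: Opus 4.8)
The plan is to derive the infinite statement from the finite Hamiltonicity in Theorem~\ref{thm:finShep2.9}(\ref{thm:finShep2.9.2}) through an exhaustion-and-limit argument, with Theorem~\ref{infShep2.9.i} controlling the ends. First I would invoke Theorem~\ref{infShep2.9.i}: as a $2$-connected graph is in particular connected, $G$ has either one end or exactly two ends, and carries a spanning ray or a spanning double ray. This backbone, together with the fact that there are at most two ends, will be the key structural input for the limit.

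The core of the argument is to exhaust $G$ by finite induced subgraphs to which the finite theorem applies. I would choose an increasing sequence of finite vertex sets $V_1 \subseteq V_2 \subseteq \cdots$ with $\bigcup_n V_n = V(G)$ such that each $G[V_n]$ is $2$-connected. This is possible: since $G$ is $2$-connected, any finite vertex set lies on a common $2$-connected subgraph $H$ obtained by successively adding ears and cycles, and then $G[V(H)]$, having $H$ as a spanning subgraph, is again $2$-connected because adding edges preserves $2$-connectedness. Crucially, each $G[V_n]$ is an \emph{induced} subgraph and therefore inherits claw-freeness and net-freeness from $G$, so Theorem~\ref{thm:finShep2.9}(\ref{thm:finShep2.9.2}) provides a Hamilton cycle $C_n$ of $G[V_n]$.

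Next I would pass to the limit. For each vertex $v$, once $v \in V_n$ it has exactly two incident edges in $C_n$, and by local finiteness there are only finitely many possible such pairs; a diagonal argument over the countably many vertices then yields a subsequence along which the incident pair at every vertex stabilises. Let $F$ be the set of edges lying in $C_n$ for all sufficiently large $n$ of this subsequence, and let $C$ be the closure of $F$ in $|G|$. By construction every vertex has degree exactly two in $F$, so $C$ is a standard subspace of $|G|$ in which every vertex has degree two; by the usual arguments this makes $C$ a disjoint union of circles meeting every vertex of $G$.

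It remains to show that $C$ is a single circle through every end, which I expect to be the main difficulty and where both $2$-connectedness and the at-most-two-ends structure of Theorem~\ref{infShep2.9.i} are indispensable. One must first check that every end has even incidence with $C$ and that no vertex is omitted, so that $C$ is a disjoint union of circles spanning $V(G)$. The crux is then to rule out that $C$ breaks into several disjoint circles: a finite vertex set separating one putative circle from the rest would contradict the $2$-connectedness of $G$, which forces $C$ to be connected, i.e.\ a single circle. This is the infinite analogue of the finite step, where $2$-connectedness is exactly what promotes a Hamilton path to a Hamilton cycle; by contrast, mere connectedness yields only the spanning ray or double ray of Theorem~\ref{infShep2.9.i}, whose closure is an arc rather than a circle in the two-ended case. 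The bound of at most two ends finally guarantees that the finitely many arcs of $C$ reaching infinity pair up correctly at the ends. Once $C$ is known to be a single circle meeting every vertex, it is automatically a Hamilton circle, being a closed subspace and hence containing all ends as well.
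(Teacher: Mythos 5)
Your proposal has a genuine gap at the step you yourself identify as the crux: passing from ``every vertex has degree two in the limit $F$'' to ``$\overline{F}$ is a single circle''. Two problems arise. First, the appeal to $2$-connectedness does not work: the components of a spanning $2$-regular subgraph of $G$ are not separated from one another by vertex separators of $G$, so $2$-connectedness of $G$ places no constraint on the connectedness of $F$ (already $K_6$ is $5$-connected yet has a $2$-factor consisting of two disjoint triangles). Second, and more fundamentally, the failure mode of the naive compactness argument is not disconnection of $F$ but excessive \emph{end degree} in $\overline{F}$. Indeed, the cut criterion (Lemma~\ref{top-con} together with the parity statement of Theorem~\ref{cycspace}) gives you topological connectedness and even end degrees essentially for free, since $F$ agrees with some Hamilton cycle $C_n$ on each finite cut; what it cannot give you is the upper bound of $2$ on the end degrees required by Lemma~\ref{circ}. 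A concrete counterexample to your limit step: in the $2$-blow-up of the double ray, with vertices $a_i,b_i$ ($i\in\ZZ$), the truncations $G_n$ admit the Hamilton cycles $a_{-n}a_{-n+1}\cdots a_nb_nb_{n-1}\cdots b_{-n}a_{-n}$, whose (already convergent, so subsequence-proof) limit is the two disjoint spanning double rays $\cdots a_{-1}a_0a_1\cdots$ and $\cdots b_{-1}b_0b_1\cdots$; the closure of this limit is spanning, topologically connected, and $2$-regular at every vertex, yet each end has degree $4$, so it is the union of two circles rather than a Hamilton circle. (There is also a circularity issue in invoking Theorem~\ref{infShep2.9.i}, which in this paper is derived from Theorem~\ref{infShep2.9.iii} and hence ultimately from the machinery of the present theorem; the bound of two ends you actually need is Lemma~\ref{lem:2ends:net}.)

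This is exactly why the paper does not take arbitrary Hamilton cycles of an arbitrary exhaustion and diagonalise. Instead it uses the structural Theorem~\ref{thm:infShep2.1}: fixing a minimal finite separator $S$ and $v\in S$, the graph decomposes into distance classes $L_i,R_i$ each inducing a clique, and the exhaustion $G_0\subseteq G_1\subseteq\cdots$ is taken along these classes. The cliques are then used to reroute each finite Hamilton cycle so that it crosses each cut $\delta(R_n)$ (and $\delta(L_n)$) in exactly two edges, and to extend $C_n'$ to $D_{n+1}$ \emph{without altering edges in} $G_{n-1}$. This consistency makes the limit well defined with no subsequence extraction, and the ``exactly two crossing edges'' property combined with the Jumping Arc Lemma~\ref{jumping-arc} is precisely what caps the end degrees at $2$. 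To repair your argument you would need to impose comparable control on the cycles $C_n$ near the boundary of each $V_n$; without it, the limit object need not be a circle.
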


For statement~(\ref{thm:finShep2.9.3}) of Theorem~\ref{thm:finShep2.9} it might not entirely be clear at first sight how to phrase an extension of the theorem.
Note that $k$-leaf-connectivity has to be replaced in the statement.
To see this, observe that for finite graphs $3$-leaf-connectivity coincides with \emph{Hamilton connectivity}, i.e. the existence of a Hamilton path with any two previously chosen vertices as endpoints.
Within an infinite graph, an ordinary infinite path can never meet this condition.
So we define a topological analogue, called \emph{topological $k$-leaf-connectedness}, whose definition can be found in Section~\ref{subsec:top}.
We obtain the following extension of statement~(\ref{thm:finShep2.9.3}) of Theorem~\ref{thm:finShep2.9}.

\begin{restatable}{thm}{infShepThree}\label{infShep2.9.iii}
Let $G$ be a locally finite, connected, claw-free and net-free graph, and let $k \in \mathbb{N}$ satisfy $k \geq 2$.
Then $G$ is $(k+1)$-connected if and only if $G$ is topologically $k$-leaf-connected.
\end{restatable}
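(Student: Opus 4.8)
The plan is to prove the two implications separately, leaning on the results already established earlier in the paper. Since $k \geq 2$, a $(k+1)$-connected graph is in particular $2$-connected, hence Hamiltonian by Theorem~\ref{infShep2.9.ii}, and therefore has at most two ends by Theorem~\ref{infShep2.9.i}; so throughout I may assume $G$ has exactly one or exactly two ends and carries a spanning ray or a spanning double ray. The backward implication (topologically $k$-leaf-connected $\Rightarrow$ $(k+1)$-connected) is the softer one and will need neither the claw- nor the net-condition; the forward implication is the main content and is where Shepherd's finite statement, Theorem~\ref{thm:finShep2.9}~(\ref{thm:finShep2.9.3}), enters.

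For the backward direction I argue by contraposition. Suppose a finite vertex set $X$ with $|X| \leq k$ separates $G$, so that $G - X$ has at least two components. Choose a $k$-set $S$ with $X \subseteq S$ (possible since $G$ is infinite), and let $T$ be a topological spanning tree of $|G|$ whose leaves are precisely $S$, as furnished by topological $k$-leaf-connectedness. Then every vertex of $X$ has degree $1$ in $T$. Deleting a degree-$1$ vertex together with its half-open incident edge from an arc-connected, circle-free subspace leaves it arc-connected, so pruning the finitely many pendant vertices of $X$ one by one shows that $T - X$ is still arc-connected. On the other hand, the finite set $X$ meets every ray only finitely often, so each end of $G$ lies in a unique component of $G - X$; hence the components of $|G| - X$ are the closures of the components of $G - X$ together with the ends they capture, and $V(G) \setminus X$ is spread over at least two of them. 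As all of $V(G)\setminus X$ lies in $T - X$, this contradicts the arc-connectedness of $T - X$. Thus no separator of size $\leq k$ exists and $G$ is $(k+1)$-connected.

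For the forward direction I fix a $k$-set $S$ and aim to build a topological spanning tree with leaf set exactly $S$. I would exhaust $G$ by an increasing sequence of finite connected induced subgraphs $G_1 \subseteq G_2 \subseteq \cdots$ with $\bigcup_n G_n = G$ and $S \subseteq V(G_1)$; each $G_n$ is automatically claw-free and net-free, these conditions being hereditary under passing to induced subgraphs. The structural input is that, by Menger's theorem for ends (Halin), a $(k+1)$-connected graph admits at least $k+1$ pairwise disjoint rays to each end; combined with the near-linear structure from Section~\ref{sec:Structure} and the spanning ray/double ray of Theorem~\ref{infShep2.9.i}, this lets me arrange the exhaustion so that at each stage there is a $(k+1)$-connected claw-free net-free finite auxiliary graph (the core, with the outside routed through the disjoint rays) on which Shepherd's statement~(\ref{thm:finShep2.9.3}) genuinely applies. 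Applying it yields a finite spanning tree $T_n$ of the core whose leaves are exactly the vertices of $S$ together with a controlled set of boundary vertices at which the tree is to be continued towards the ends; splicing in the disjoint-ray tails gives these boundary vertices degree at least $2$, so they do not survive as leaves.

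Finally I would pass to a limit: by local finiteness the $T_n$ can be made nested along a diagonal subsequence, and their union together with the ends gives a closed, arc-connected, circle-free subspace $T$ of $|G|$ containing every vertex and every end, whose only degree-$1$ vertices are those of $S$. The hard part is precisely this coordination: induced subgraphs inherit the forbidden-subgraph conditions but \emph{not} $(k+1)$-connectivity, so one must work to produce at every finite stage a $(k+1)$-connected claw-free net-free auxiliary graph to which the finite theorem truly applies, while simultaneously ensuring that no spurious leaf is created at the boundary or \emph{at infinity} and that the prescribed (possibly deep) vertices of $S$ end up with degree exactly $1$. The restriction to at most two ends, and the explicit structure it provides, is what makes this coordination tractable.
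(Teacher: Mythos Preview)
Your backward direction is essentially the paper's argument: enlarge a small separator to a $k$-set, take the promised topological spanning tree with that leaf set, and observe that an arc crossing the separator forces some leaf to have degree~$\geq 2$. Fine.

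Your forward direction, however, has a genuine gap, and it is exactly the one you flag yourself. You propose an exhaustion $G_1\subseteq G_2\subseteq\cdots$, Menger-type disjoint rays, a $(k+1)$-connected auxiliary graph at each stage, and a diagonal limit; but you never say how to build the auxiliary graph so that it is simultaneously $(k+1)$-connected \emph{and} claw- and net-free, nor how to ensure the trees $T_n$ nest. Routing ``the outside through the disjoint rays'' typically destroys claw-freeness (a vertex on one ray with neighbours on two others and inside the core gives a claw), so Shepherd's finite theorem need not apply to your auxiliary graph. The compactness step is also not free: finite spanning trees with a prescribed leaf set do not come with any canonical choice, so there is no obvious inverse system to diagonalise over.

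The paper sidesteps all of this by exploiting Theorem~\ref{thm:infShep2.1} much more directly than you do. Once a minimal finite separator $S$ and $v\in S$ are fixed, $G-(S\setminus\{v\})$ is distance-$2$-complete centred at $v$: the distance classes $R_i$ (and $L_i$) induce cliques. This has two consequences you are not using. First, for a single sufficiently large $\ell$ the finite induced subgraph $G_{\ell+1}$ \emph{is} $(k+1)$-connected, because any $x$--$y$ path in $G$ that escapes $G_{\ell+1}$ can be shortcut inside the clique $G[R_{\ell+1}]$; so Shepherd's finite theorem applies directly to $G_{\ell+1}$, with no auxiliary construction. Second, the tail $\bigcup_{i>\ell+1}R_i$ is just a chain of cliques, so the finite tree can be modified to cross $\delta(R_{\ell+1})$ in exactly two edges and then extended \emph{deterministically}, by threading a single path (or ray) through the successive cliques, exactly as in the proof of Theorem~\ref{infShep2.9.ii}. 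No limit, no diagonalisation, no coordination problem. Ends in the prescribed leaf set $B$ are handled by replacing each such end by a proxy vertex $w$ in a far-out $R_\ell$, applying the finite theorem with $w$ as a leaf, and then appending an $\omega$-ray at $w$.

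In short: drop the exhaustion-and-limit plan and instead apply the finite theorem \emph{once} to a large enough $G_{\ell+1}$, using the clique structure of the distance classes both to certify its $(k+1)$-connectivity and to extend the resulting tree to infinity by hand.
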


The key in~\cite{Shepherd.1991} to prove Theorem~\ref{thm:finShep2.9} is the following structural characterisation of the involved graphs.
In order to state this characterisation we have to give another definition first.
A graph $G$ with a vertex $v \in V(G)$ is called \emph{distance-$2$-complete centered at $v$} if $G-v$ has exactly two components and in each component $C$ and for each $i \in \mathbb{N}$, the vertices at distance $i$ from $v$ in $G[V(C) \cup \{v\}]$ induce a complete graph. 

\begin{restatable}{thm}{finShepstruc}\cite{Shepherd.1991}*{Thm 2.1}\label{finShep2.1}
A finite connected graph $G$ is claw-free and net-free if and only if for every minimal separator $S \subseteq V(G)$ and every $v \in S$, the graph $G - (S \setminus \{v \})$ is distance-$2$-complete centered at $v$.
\end{restatable}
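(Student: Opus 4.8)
The plan is to prove the two implications separately, in each case working relative to a minimal separator $S$ and translating the forbidden induced subgraphs into the failure (or success) of the ``distance levels are cliques'' property. Throughout I would use the standard structure of a minimal separator: there are two \emph{full} components $C_1,C_2$ of $G-S$ with $N(C_1)=N(C_2)=S$, so that every $s\in S$ has a neighbour in both $C_1$ and $C_2$. For the forward direction, fix such an $S$ and a vertex $v\in S$, and write $G'=G-(S\setminus\{v\})$, so that $G'-v=G-S$. First I would show $G-S$ has \emph{exactly} two components: if some $s\in S$ had neighbours in three distinct components, one neighbour from each would be an independent triple in $N(s)$, i.e.\ an induced claw; hence every $s\in S$ has neighbours only in $C_1,C_2$, and by connectedness $G-S=C_1\dcup C_2$, giving the ``exactly two components'' clause.

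For each component $C\in\{C_1,C_2\}$ let $L_i$ be the set of vertices at distance $i$ from $v$ in $G'[V(C)\cup\{v\}]$; I prove each $L_i$ is a clique by induction on $i$. For $i=1$, two non-adjacent $x,y\in L_1=N(v)\cap C$ together with a neighbour $z$ of $v$ in the \emph{other} component form an independent triple in $N(v)$, a claw. For the step, suppose $L_1,\dots,L_i$ are cliques and take non-adjacent $x,y\in L_{i+1}$. They have no common neighbour $w\in L_i$, since such a $w$ with a down-neighbour $u\in L_{i-1}$ (taking $u=v$ when $i=1$) would give the induced claw with centre $w$ and leaves $x,y,u$. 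Pick down-neighbours $w_x\in N(x)\cap L_i$ and $w_y\in N(y)\cap L_i$; they are distinct and, by induction, adjacent, and a down-neighbour $t\in L_{i-1}$ of $w_x$ must also be adjacent to $w_y$, for otherwise $w_x$ with leaves $t,w_y,x$ is a claw. Then $\{w_x,w_y,t\}$ is a triangle, and I would exhibit an induced net on it with pendants $x$ at $w_x$, $y$ at $w_y$, and a third pendant at $t$: a down-neighbour $u'\in L_{i-2}$ of $t$ when $i\ge 2$, and the cross-component neighbour $z$ of $v$ when $i=1$ (so $t=v$). All required non-edges follow because neighbours of a level-$j$ vertex lie in levels $j-1,j,j+1$, so vertices whose levels differ by at least $2$ are non-adjacent. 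This contradicts net-freeness, so $L_{i+1}$ is a clique.

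For the converse I argue by contrapositive: from an induced claw or net I produce a minimal separator $S$ and a centre $v\in S$ for which $G-(S\setminus\{v\})$ fails to be distance-$2$-complete, either because $G-S$ has more than two components or because some level $L_i$ is not a clique. For an induced claw with centre $c$ and independent leaves $a_1,a_2,a_3$, I take a minimal $a_1$--$a_2$ separator $S$; as $c$ is a common neighbour it lies in $S$, and with $v=c$ either $G-S$ has at least three components (done) or $a_3$ lands in the same component as $a_1$ or as $a_2$, placing two non-adjacent leaves in $L_1=N(c)\cap C$, so that level is not a clique. For an induced net I separate two of its degree-$1$ vertices; the two of them together with their triangle-neighbours must realise two non-adjacent vertices at a common distance from the centre, again breaking a level.

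\textbf{Main obstacle.} In the forward direction the technical heart is the explicit net in the inductive step, in particular handling the degenerate base levels (where $L_{i-1}$ or $L_{i-2}$ collapses to $\{v\}$) by using the neighbour of $v$ in the opposite component as the third pendant. The main obstacle overall, however, lies in the converse: one must choose the witnessing minimal separator so that the relevant claw/net vertices do \emph{not} themselves fall into $S$ (ensuring $a_3\notin S$ above, or that the net's pendants sit at equal distance in a single component). I expect to resolve this by an extremal choice --- taking a minimal separator whose full component containing the target vertex is inclusion-minimal, equivalently passing to a ``closest'' minimal separator $N(C')$ for a suitably shrunk connected set $C'$ --- so that the obstructing vertex is pushed out of the separator while the separation is preserved.
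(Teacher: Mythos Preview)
The paper does not give its own self-contained proof of this finite statement---it is quoted from Shepherd. However, in proving the locally finite analogue (Theorem~\ref{thm:infShep2.1}) the paper reproduces Shepherd's argument for the implication ``distance-$2$-complete for all minimal separators $\Rightarrow$ claw- and net-free'' (your converse direction), while for the other implication it simply invokes the finite theorem as a black box on finite induced subgraphs. So the only meaningful comparison is with your converse.

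Your forward direction is a clean level-by-level induction and looks correct; nothing in the paper competes with it.

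For the converse your plan differs from Shepherd's, and the obstacle you flag is real and not resolved by the extremal choice you sketch. Nothing you wrote prevents $a_3$ from lying in \emph{every} minimal $a_1$--$a_2$ separator (and symmetrically for the other two pairs), and for the net the problem is worse since several of the six vertices can be absorbed into $S$ simultaneously. Shepherd's argument, as reproduced in the paper, sidesteps this by a different mechanism: start with $S_1:=N(h_1)$, which separates the leaf $h_1$ from $h_2,h_3$ and contains a central vertex of the given claw or net, and iteratively delete vertices. At each step either the deletion still leaves the three current leaves split among components of $G-S_{i+1}$, in which case one keeps the same claw/net, or it does not---and then one shows $S_i$ contains a vertex with neighbours in three distinct components of $G-S_i$, yielding a \emph{fresh} induced claw whose leaves are again split by the smaller separator. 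When the process terminates at a minimal separator $S_\ell$, the surviving claw or net sits inside $G-(S_\ell\setminus\{c_\ell\})$ with its central vertex $c_\ell\in S_\ell$ and its leaves not all in one component of $G-S_\ell$, contradicting distance-$2$-completeness directly. The key idea you are missing is that one should be willing to \emph{replace} the original forbidden subgraph by a new claw discovered during the shrinking, rather than trying to manoeuvre the original vertices out of the separator.
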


We extend Theorem~\ref{finShep2.1} to locally finite graphs via the following result.

\begin{restatable}{thm}{infShepstruc}\label{thm:infShep2.1}
A locally finite connected graph $G$ is claw-free and net-free if and only if for every minimal finite separator $S \subseteq V(G)$ and every $v$ in $S$, the graph $G - (S \setminus \{v \})$ is distance-$2$-complete centered at $v$.
\end{restatable}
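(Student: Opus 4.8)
The plan is to prove both implications directly, exploiting that claws and nets are \emph{finite} obstructions while local finiteness makes every relevant separator finite and every distance sphere finite. I take ``minimal separator'' in the usual sense of a minimal $a$--$b$ separator, so that $G-S$ has two \emph{full} components (each with neighbourhood exactly $S$); since $G$ is locally finite, any separator contained in a single neighbourhood is automatically finite. For finite $G$ the statement is exactly Theorem~\ref{finShep2.1}, so the genuinely new content is the infinite case, but the arguments below never use finiteness of $G$ and apply uniformly.

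For the forward implication, assume $G$ is claw-free and net-free and fix a minimal finite separator $S$ with a vertex $v\in S$. First I would show $G-S$ has exactly two components: a third component, being adjacent to $S$ by connectedness, would give some $s\in S$ a neighbour in three pairwise distinct components, and three such neighbours induce a claw. Writing $A,B$ for the two components and fixing $b\in B$ with $b\sim v$, I then show that in $H:=G[V(A)\cup\{v\}]$ every breadth-first distance level $L_i$ from $v$ induces a complete graph, by induction on $i$. Suppose $x,y\in L_i$ are non-adjacent. If they share a neighbour $w\in L_{i-1}$, then $w$ together with $x,y$ and any neighbour of $w$ in $L_{i-2}$ (or the vertex $b$ when $i=1$) induces a claw. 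Otherwise I pick predecessors $x'\neq y'\in L_{i-1}$ of $x,y$, which are adjacent by the inductive hypothesis and satisfy $x\not\sim y'$ and $y\not\sim x'$; choosing lower neighbours $p\in N(x')\cap L_{i-2}$ and $q\in N(y')\cap L_{i-2}$, either $p=q$ (in particular when $i=2$, where $p=q=v$) and $\{x',y',p\}$ is a triangle carrying the horns $x,y$ together with a further lower vertex (or $b$), an induced net; or $p\neq q$, in which case $x'\not\sim q$ forces $\{y';x',q,y\}$ to be an induced claw. Each case contradicts the hypothesis, so all levels are complete and $H$ is distance-$2$-complete centred at $v$.

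For the converse I argue by contraposition: given an induced claw or net I produce a single minimal finite separator violating the conclusion. For a claw with centre $c$ and independent leaves $a_1,a_2,a_3$, I take $S$ to be a minimal subset of $N(a_1)$ separating $a_1$ from $a_2$; since $a_3\not\sim a_1$ we have $a_3\notin N(a_1)\supseteq S$, and since $c$ is adjacent to both $a_1$ and $a_2$ every such separator contains $c$. Now either $G-S$ has at least three components, already a violation, or it has exactly two and the surviving leaf $a_3$ joins $a_1$ or $a_2$ in one component, placing two non-adjacent neighbours of $c$ in the same level $L_1$. For a net with triangle $t_1t_2t_3$ and horns $h_1,h_2,h_3$, I take $S$ to be a minimal subset of $N(h_1)$ separating $h_1$ from $t_2$; induced-ness gives $t_2,t_3,h_2,h_3\notin N(h_1)\supseteq S$ while $t_1\in S$. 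Centring at $v=t_1$, the vertices $t_2,t_3,h_2,h_3$ all lie in one component, and $h_2,h_3$ sit at distance $2$ from $t_1$ and are non-adjacent, so the level $L_2$ fails to be complete.

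The main obstacle is this backward construction: a claw or net is a purely local object, and the difficulty is to realise it as the failure of a \emph{global} separator condition while keeping the offending independent vertices inside the two components rather than absorbed into the separator. The device that makes this work is to choose the separator \emph{inside a single neighbourhood} $N(a_1)$ (respectively $N(h_1)$), so that non-adjacency to the chosen vertex automatically excludes the troublesome vertices from $S$ and simultaneously forces the intended centre into $S$. On the forward side the only genuinely delicate point is the inductive step in which the predecessors $x',y'$ have disjoint lower neighbourhoods, since there one obtains a claw rather than the expected net; pinpointing the right four vertices there is what closes the induction.
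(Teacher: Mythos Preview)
Your forward implication (claw-free and net-free $\Rightarrow$ distance-$2$-complete) is correct and gives a genuinely different argument from the paper. The paper reduces to the finite result Theorem~\ref{finShep2.1} by nesting $G$ in an increasing chain of finite induced subgraphs $G_0\subseteq G_1\subseteq\cdots$ in which $S$ remains a minimal separator, whereas you argue directly by induction on the BFS levels. Your approach is more self-contained; the paper's buys brevity by leveraging the finite theorem. One expository wrinkle: your dichotomy ``$p=q$ versus $p\neq q$'' should really be ``$x'$ and $y'$ share a lower neighbour versus they do not'', since with an arbitrary choice of $p,q$ the inequality $p\neq q$ does not by itself give $x'\not\sim q$. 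Once phrased that way the induction closes exactly as you describe.

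The backward implication, however, does not establish the theorem as the paper states it. The paper works with $\subseteq$-\emph{minimal} vertex separators (this is explicit in its proof and in neighbouring statements such as Lemma~\ref{lem:bull-free_clique-sep}), not with minimal $a$--$b$ separators. Your $S$ --- a minimal subset of $N(a_1)$ (respectively $N(h_1)$) separating two specified vertices --- need not be $\subseteq$-minimal: if $G-S$ has a third component $C'$, some $S\setminus\{x\}$ may still separate $G$ (isolating $C'$) even though it no longer separates your chosen pair. In that case your sentence ``already a violation'' is empty, because an $S$ that is not $\subseteq$-minimal is simply not a witness against the hypothesis. The paper addresses precisely this obstacle with a recursive shrinking procedure: starting from $S_1=N(h_1)$ it repeatedly removes a vertex, and when the current leaf set $L_i$ ceases to be separated but $S_i\setminus\{x\}$ is still a separator, it locates a \emph{new} induced claw centred at some $v\in S_i$ adjacent to three components and replaces $H^i$ and $L_i$ accordingly. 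This replacement step --- producing fresh claws during the descent to $\subseteq$-minimality --- is exactly what your argument is missing. If $G-S$ happens to have only two components your $S$ \emph{is} $\subseteq$-minimal and your reasoning goes through, so the gap is confined to the third-component case; but that case cannot be dismissed.
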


Beside Theorem~\ref{thm:finShep2.9} we shall also extend the following theorem by Ryj\'{a}\v{c}ek, which is about claw-free graphs where induced bulls may exist, but only under an additional assumption.

\begin{restatable}{thm}{finRyja}\cite{Ryjavcek.1995}*{main theorem}
\label{finRyjacek}
Let $G$ be a finite, $2$-connected, claw-free graph.
If for every induced bull $B$ in $G$ its horns have a common neighbour in $G-B$, then $G$ is Hamiltonian.
\end{restatable}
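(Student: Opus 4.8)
The plan is to argue by contradiction through a longest-cycle argument, letting claw-freeness impose rigid local structure around a longest cycle and calling on the bull hypothesis only at the very end to lengthen that cycle. Suppose $G$ were not Hamiltonian and let $C$ be a longest cycle in $G$. Since $G$ is $2$-connected, $G - C$ is nonempty and some component $H$ of $G - C$ has at least two neighbours on $C$. The basic local facts follow at once from claw-freeness and the maximality of $C$: if $w \in H$ is adjacent to a cycle vertex $a$ with cycle-neighbours $a^{-}, a^{+}$, then $w$ is adjacent to neither $a^{-}$ nor $a^{+}$ (otherwise the single vertex $w$ could be inserted into $C$, lengthening it), and hence $a^{-}a^{+} \in E(G)$, since otherwise $\{a;\, a^{-}, a^{+}, w\}$ would induce a claw. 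So every vertex of $C$ that sees $H$ lies in a triangle $a^{-}a\,a^{+}$ of $C$, with its $H$-neighbour dangling off $a$ as a candidate horn.

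Next I would locate an induced bull with both horns inside $H$. Picking attachment vertices $a, b$ of $H$ on $C$ and $H$-neighbours $w_a \sim a$, $w_b \sim b$, the triangle produced above together with $w_a$ and $w_b$ is the natural candidate, with horns exactly $w_a$ and $w_b$. The delicate point is to certify that this five-vertex configuration is genuinely \emph{induced}: one must exclude the stray adjacencies $w_a \sim w_b$ and $w_a, w_b$ meeting the wrong triangle vertices, and ensure the two horns sit on distinct triangle vertices. The robustness of the scheme is that every such forbidden edge is itself useful: any short-cut edge among the would-be bull vertices, or any short $H$-path between $a$ and $b$, produces a detour that absorbs at least one vertex of $H$ into $C$ and contradicts maximality outright. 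Thus either a contradiction appears immediately, or a genuine induced bull $B$ with horns $w_a, w_b$ is obtained.

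Now I would invoke the hypothesis: the horns $w_a, w_b$ have a common neighbour $z \in G - B$, giving the path $a\,w_a\,z\,w_b\,b$. The clean subcase is $z \notin C$ together with $a, b$ adjacent on $C$: replacing the edge $ab$ by this path inserts the three off-cycle vertices $w_a, z, w_b$ and produces a cycle longer than $C$, the desired contradiction. In general one inserts the path along an arc between $a$ and $b$, using the forced triangle edges $a^{-}a^{+}$ to re-absorb the interior arc vertices so that none are lost; and the genuinely awkward subcase $z \in V(C)$ requires a further claw-free rerouting at $z$ and at the attachment vertices rather than a naive insertion. In every case the assumption $G - C \neq \emptyset$ collapses, so $C$ is a Hamilton cycle. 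Note that this logic also covers bull-free $G$: there the bull never materialises, the hypothesis is vacuous, and the contradiction is reached already by the direct insertions of the first two steps.

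I expect the main obstacle to be precisely the bookkeeping of the third step: pinning down an induced bull rather than a near-bull carrying one stray edge, and above all handling a common neighbour $z$ that already lies on $C$, where lengthening the cycle demands a careful claw-free rerouting that genuinely uses the triangle structure established at the outset. A cleaner but heavier alternative would be to pass to the Ryj\'{a}\v{c}ek closure $\operatorname{cl}(G)$, which is the line graph of a triangle-free graph and preserves both $2$-connectivity and (non-)Hamiltonicity, and then to translate the bull condition into the existence of a dominating closed trail in the root graph; I would keep the direct longest-cycle argument as the primary route and fall back on the closure only if the rerouting analysis grows unwieldy.
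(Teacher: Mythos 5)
First, a point of orientation: the paper does not prove this statement at all. Theorem~\ref{finRyjacek} is quoted from Ryj\'{a}\v{c}ek's 1995 paper, and the authors' own contribution is the locally finite analogue, Theorem~\ref{infRyjacek}, which they obtain by a completely different route: Theorem~\ref{bullreduct} shows that in the infinite locally finite setting the bull condition collapses to outright bull-freeness, after which the net-free machinery of Theorem~\ref{infShep2.9.ii} applies. So your attempt is competing with Ryj\'{a}\v{c}ek's original argument, not with anything carried out in this paper.

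Judged as a proof, your sketch has genuine gaps at exactly the points you yourself flag as delicate. (1) The induced bull is never actually exhibited. A bull with horns $w_a\sim a$ and $w_b\sim b$ requires $a$ and $b$ to lie in a common triangle, hence $ab\in E(G)$; for general attachment vertices of $H$ this is unavailable (consecutive attachment vertices need not be adjacent on $C$), and "the triangle produced above", namely $a^-aa^+$, contains no neighbour of $w_b$. Even in the favourable case $b=a^+$, where the forced chord $a^-a^+$ yields the triangle $a^-ab$, your claim that every stray adjacency "absorbs a vertex of $H$ and contradicts maximality outright" is false: the edge $w_ba^-$, for instance, only lets you exchange $a$ for $w_b$ on the cycle, giving a cycle of the \emph{same} length. (2) The lengthening step is not an argument. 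Replacing an $a$--$b$ arc of $C$ by the path $aw_azw_bb$ lengthens $C$ only if that arc has at most two interior vertices; the proposal to "re-absorb" the interior of a longer arc via forced triangle edges fails because those chords are forced only at attachment vertices of $G-C$, not at arbitrary cycle vertices. Most importantly, the subcase $z\in V(C)$ --- which is where essentially all of the difficulty of the theorem lives --- is dismissed with "requires a further claw-free rerouting" and no construction. Until the bull is actually produced and the case $z\in V(C)$ is handled, this is a plan rather than a proof; the fact that Ryj\'{a}\v{c}ek's published argument is substantially more involved than a direct longest-cycle insertion should be read as a warning that these gaps are not routine to close.
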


Our key to extend Theorem~\ref{finRyjacek} to locally finite graphs is the following structural result about the involved graphs.

\begin{restatable}{thm}{bullreduct}\label{bullreduct}
Let $G$ be an infinite locally finite connected claw-free graph such that for every induced bull $B \subseteq G$ the horns of $B$ have a common neighbour in $G-B$.
Then $G$ is already bull-free.
\end{restatable}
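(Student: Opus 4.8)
The plan is to argue by contradiction: assume $G$ contains an induced bull $B$, with triangle $\{a_1,a_2,a_3\}$ and horns $b_1,b_2$ attached at $a_1$ and $a_2$ respectively, so that $a_3$ is the hornless apex. By hypothesis the horns have a common neighbour $w\in G-B$, i.e.\ $w\sim b_1$ and $w\sim b_2$. The first, clean deduction is that $w\not\sim a_3$: since $a_3,b_1,b_2$ are pairwise non-adjacent in the bull, adjacency of $w$ to all three would make $\{a_3,b_1,b_2\}$ an independent triple in $N(w)$, producing an induced claw centred at $w$ and contradicting claw-freeness. This pins down the behaviour of $w$ on the apex, and it remains to analyse $w$ on $a_1,a_2$.

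I would then split according to $N(w)\cap\{a_1,a_2\}$, using the symmetry $1\leftrightarrow 2$. If $w\sim a_1$, then $\{a_1,b_1,w\}$ is a triangle and the five vertices $\{a_1,b_1,w\}\cup\{a_3,b_2\}$ — with horns $a_3$ at $a_1$ and $b_2$ at $w$ — form a \emph{fresh} induced bull (one checks there are no unexpected chords, using $w\not\sim a_3$ and the bull's own non-adjacencies); the case $w\sim a_2$ is symmetric. In the remaining case $w\not\sim a_1$ and $w\not\sim a_2$, the vertices $a_1,b_1,w,b_2,a_2$ induce a $5$-cycle, the triangle edge $a_1a_2$ closing the path $a_1b_1wb_2a_2$, with the apex $a_3$ sitting over the edge $a_1a_2$. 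In either outcome the newly exhibited bull again has, by hypothesis, a common neighbour of its horns, which by the same claw argument must avoid the relevant apex.

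The crucial point — and the reason the statement is genuinely infinitary — is that none of these cases closes off at the level of finitely many vertices. The six-vertex gadgets just described (for instance the $5$-cycle with an apex on one edge) are themselves finite, connected, claw-free, and satisfy the horn hypothesis while containing a bull; hence the finite analogue of the theorem is \emph{false}, and any correct proof must use that $G$ is infinite and locally finite. My plan is therefore to convert the ``new bull from old bull'' step into an infinite-descent/extremal argument: fix a root $r\in V(G)$ and, among all pairs consisting of an induced bull together with a common neighbour of its horns, choose one extremal for a suitable parameter (say the distance from $r$ to the hornless apex, or the sum of distances of the bull's vertices from $r$). I would then show that the bull produced by the case analysis, together with its guaranteed common neighbour, strictly improves this parameter, contradicting extremality; an alternative route is to iterate the construction so as to force a single fixed vertex to acquire infinitely many pairwise distinct common neighbours, directly violating local finiteness.

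The main obstacle I anticipate is exactly this last step: arranging the case analysis so that the chosen parameter is provably \emph{monotone} under the passage from $B$ to the new bull, since the new bull reuses some vertices of the old one and its common neighbour need not be new. Controlling this demands careful bookkeeping of which vertices recur, a verification at each turn that the exhibited five-vertex set really is an \emph{induced} bull (this is where claw-freeness and the forced non-adjacencies above are invoked repeatedly), and ruling out the degenerate possibility that the process cycles within a bounded ball — which is precisely what strict monotonicity together with local finiteness is designed to preclude.
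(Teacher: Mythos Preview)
Your opening moves are correct: the claw argument forcing $w\not\sim a_3$ is clean, and in the case $w\sim a_1$ the five vertices $\{a_1,b_1,w\}\cup\{a_3,b_2\}$ do form an induced bull $B'$ with apex $b_1$ and horns $a_3,b_2$. But the gap you flag is not a mere technicality; it is fatal to the extremal scheme as stated. In $B'$ the horns $a_3$ and $b_2$ already have $a_2$ as a common neighbour in $G-B'$, so no new vertex need appear. Running your recipe once more (common neighbour $a_2$, adjacent to the triangle vertex $a_1$) produces the triangle $\{a_1,a_3,a_2\}$ with horns $b_1$ at $a_1$ and $b_2$ at $a_2$ --- which is exactly the original bull $B$. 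Thus $B\to B'\to B$ cycles inside the six-vertex gadget, and neither a distance-to-root parameter nor an ``accumulate neighbours at a fixed vertex'' argument can gain traction: the process simply oscillates without ever leaving $V(B)\cup\{w\}$. The same happens in your $5$-cycle case, where the only induced bull on those six vertices is $B$ itself and $w$ is its designated common neighbour.

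The paper avoids this by importing the infinitary content \emph{before} the case analysis rather than hoping it emerges from iteration. It fixes a distance-increasing ray $R=r_0r_1r_2\ldots$ with respect to the finite set $V(B)\cup\{c\}$ (this exists by local finiteness and K\H{o}nig's lemma), so that $r_i$ lies at distance exactly $i$ from that set. The case split is then on the start vertex $r_0\in V(B)\cup\{c\}$, and the work consists in showing that the claw-free and common-neighbour hypotheses repeatedly force edges from low-index $r_i$'s back into $V(B)\cup\{c\}$, or force a common neighbour of some vertex in $V(B)$ and some $r_j$ with $j\geq 2$ --- either of which contradicts the distance-increasing property. The ray supplies an inexhaustible source of \emph{genuinely new} vertices $r_1,r_2,r_3,\ldots$ whose distances are pinned down in advance, which is precisely what your descent lacks. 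If you want to salvage your approach, you would need to inject a ray (or at least an infinite geodesic path escaping $V(B)\cup\{w\}$) at the outset and anchor the bulls to it.
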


While the class of claw-free and bull-free graphs is a proper subclass of the class of claw-free graphs where the horns of every induced bull have a common neighbour outside the bull for finite graphs, they now coincide for infinite locally finite graphs.
To see that these classes differ in finite graphs, just consider a bull itself with an additional vertex only adjacent to the two horns of the bull.

Now since bull-free graphs are especially net-free, we obtain the following theorem for locally finite graphs as a corollary of Theorem~\ref{infShep2.9.ii} and Theorem~\ref{bullreduct}.

\begin{restatable}{thm}{infRyja}\label{infRyjacek}
Let $G$ be a locally finite, $2$-connected, claw-free graph.
If for every induced bull $B$ in $G$ its horns have a common neighbour in $G-B$, then $G$ is Hamiltonian.
\end{restatable}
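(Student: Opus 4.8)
The plan is to derive this statement as a direct corollary of Theorem~\ref{bullreduct} together with Theorem~\ref{infShep2.9.ii}, after disposing of the finite case separately. First I would split according to whether $G$ is finite or infinite. If $G$ is finite, then the hypotheses are precisely those of Ryj\'{a}\v{c}ek's theorem, so the conclusion is immediate from Theorem~\ref{finRyjacek} and nothing further is required.

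For the infinite case I would first apply Theorem~\ref{bullreduct} to $G$: since $G$ is an infinite, locally finite, connected (being $2$-connected) claw-free graph in which the horns of every induced bull share a common neighbour outside the bull, Theorem~\ref{bullreduct} yields that $G$ is already bull-free. The single genuine observation to record is that bull-freeness forces net-freeness. This holds because every net contains an induced bull: taking the central triangle of the net together with exactly two of its three pendant vertices induces precisely a bull, namely the triangle with two horns. Hence any graph containing an induced net also contains an induced bull, so bull-free implies net-free; in particular $G$ is net-free.

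With this in hand, $G$ is an infinite, locally finite, $2$-connected graph that is simultaneously claw-free and net-free, which is exactly the hypothesis of Theorem~\ref{infShep2.9.ii}. That theorem then delivers a Hamilton circle of $G$, completing the infinite case and thereby the proof.

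I do not expect a real obstacle in this deduction itself, since the substance of the argument lies entirely in the two ingredients being combined, namely the bull-reduction of Theorem~\ref{bullreduct} and the Hamiltonicity result of Theorem~\ref{infShep2.9.ii}. The only point demanding a moment of care is the elementary verification that every net induces a bull, which underlies the passage from bull-free to net-free; beyond that, the result follows by merely checking that the hypotheses of Theorem~\ref{infShep2.9.ii} are met.
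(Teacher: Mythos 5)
Your proposal is correct and matches the paper's own argument exactly: the paper derives this theorem as a corollary of Theorem~\ref{bullreduct} (which makes $G$ bull-free in the infinite case) combined with Theorem~\ref{infShep2.9.ii}, using the observation that bull-free implies net-free, with the finite case covered by Ryj\'{a}\v{c}ek's original theorem. Nothing further is needed.
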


The structure of this paper is as follows.
In Section~\ref{sec:Preliminaries} we will first introduce the needed definitions and notation.
Furthermore, we shall state all tools which we shall use to prove our main results.
In Section~\ref{sec:Structure} we shall analyse the structure of the graphs we consider in this paper and give examples of them.
We also prove Theorem~\ref{bullreduct} and Theorem~\ref{thm:infShep2.1} in that section.
Finally, we prove our main results regarding Hamiltonicity, i.e. Theorem~\ref{infShep2.9.i}, Theorem~\ref{infShep2.9.ii}, Theorem~\ref{infShep2.9.iii} and Theorem~\ref{infRyjacek} in Section~\ref{sec:Shep}.

\section{Preliminaries}\label{sec:Preliminaries}

We will follow the graph theoretical notation and use basic facts without quoting them from \cite{Diestel.Buch}, which includes especially the topological approach to locally finite graphs in \cite{Diestel.Buch}*{Ch.\ 8.5}.
For a wider survey of topological infinite graph theory, see \cite{Diestel.Arx}.

\subsection{Basic notions}\label{subsec:basic}

All graphs which are considered in this paper are undirected and simple.
In general, we do not assume a graph to be finite.
A graph is called \textit{locally finite} if every vertex has finite degree.

For the rest of this section let $G$ denote some graph.
Later in this section, however, we shall make further assumptions on $G$.

Let $X$ be a vertex set of $G$.
We denote by $G[X]$ the induced subgraph of $G$ with vertex set $X$.
For small vertex sets, we sometimes omit the set brackets, i.e.~we write $G[a,b,c]$ as a short form for $G[\{a,b,c\}]$.
We write $G-X$ for the graph $G[V \setminus X]$.
If $H$ is a subgraph of $G$ we shall write $G-H$ instead of $G-V(H)$.
Again we omit set brackets around small vertex sets, especially for singleton sets.
We briefly denote the cut $E(X, V \setminus X)$ by $\delta(X)$.
For any $i \in \mathbb{N}$ we denote by $N_i(X)$ and $N_i(v)$ the set of vertices of distance at most $i$ in $G$ from the vertex set $X$ or from a vertex $v \in V(G)$.

For two subgraphs $G_1$ and $G_2$ of $G$ we define $G_1 \cup G_2 = G[V(G_1) \cup V(G_2)]$.

Let $C$ be a cycle of $G$ and $u$ be a vertex of $C$.
We implicitly fix an orientation of the cycle and we write $u^+$ and $u^-$ for the neighbour of $u$ in $C$ in positive and negative, respectively, direction of $C$ using a fixed orientation of $C$.
Later on we will not always mention that we fix an orientation for the considered cycle using this notation.

If $v$ and $w$ are vertices of a tree $T$, then we denote by $vTw$ the unique $v$--$w$ path in $T$.

For some $k \in \mathbb{N}$, we say that a finite graph $G'$ is \emph{$k$-leaf-connected} if $|V(G')| > k$ and given any vertex set $S \subseteq V(G')$ with $|S| = k$, then $G'$ has a spanning tree $T$ whose set of leaves is precisely $S$.
Note that for any graph $G$ being $2$-leaf-connected is equivalent to being \emph{Hamilton connected}, namely that any two distinct vertices $v, w$ of $G$ are connected via a Hamilton path with $v$ and $w$ as its endpoints.

For any $v \in V(G)$ we call $G$ \emph{distance-$2$-complete centered at $v$} if $G-v$ has exactly two components and in each component $K$ and for each $i \in \mathbb{N}$, the vertices at distance $i$ from $v$ in $G[V(K) \cup \{v\}]$ induce a complete graph.

A one-way infinite path $R$ in $G$ is called a \textit{ray} of $G$.
A subgraph of a ray $R$ is called a \emph{tail} of $R$.
The unique vertex of degree $1$ of $R$ is called the \emph{start vertex} of $R$.
For a vertex $r$ on a ray $R$, we denote the tail of $R$ with start vertex $r$ by $rR$.
A two-way infinite path in $G$ is called a \emph{double ray}.

An equivalence relation can be defined on the set of all rays of $G$ by saying that two rays in $G$ are \emph{equivalent} if they cannot be separated by finitely many vertices.
It is easy to check that this defines in fact an equivalence relation.
The corresponding equivalence classes of rays under this relation are called the \textit{ends} of $G$.
We denote the sets of ends of a graph $G$ with $\Omega(G)$.
If $R \in \omega$ for some end $\omega \in \Omega(G)$, then we briefly call $R$ an \emph{$\omega$-ray}.

Note that for any end $\omega$ of $G$ and any finite vertex set $S \subseteq V(G)$ there exists a unique component $C(S, \omega)$ that contains tails of all $\omega$-rays.
We say that a finite vertex set $S \subseteq V(G)$ \emph{separates} two ends $\omega_1$ and $\omega_2$ of $G$ if $C(S, \omega_1) \neq C(S, \omega_2)$.
Note that any two different ends can be separated by a finite vertex set.

We say that a (double) ray of $G$ is \emph{geodetic} if and only if for any two vertices on the (double) ray there is no shorter path between these two vertices in $G$ than the one on the (double) ray.

Let $R$ be a ray in $G$ and $X \subseteq V(G)$ be finite.
We call $R$ \emph{distance increasing w.r.t. $X$} if $|V(R) \cap N_i(X)| = 1$ for every $i \in \mathbb{N}$.
Note that a distance increasing ray w.r.t.\ $X$ has its start vertex in $X$.

\subsection{Topological notions}\label{subsec:top}

For the rest of this section, we assume $G$ to be locally finite and connected.
A topology can be defined on $G$ together with its ends to obtain a topological space which we call $|G|$.
Note that inside $|G|$, every ray of $G$ converges to the end of $G$ it is contained in.
For a precise definition of $|G|$, see \cite{Diestel.Buch}*{Ch.\ 8.5}.
Apart from the definition of $|G|$ as in \cite{Diestel.Buch}*{Ch.\ 8.5}, there is an equivalent way of defining the topological space $|G|$, namely, by endowing $G$ with the topology of a $1$-complex and considering the Freudenthal compactification of $G$.
This connection was examined in \cite{Freud-Equi}.
For the original paper of Freudenthal about the Freudenthal compactification, see \cite{Freud}.

For a point set $X$ in $|G|$, we denote its closure in $|G|$ by $\overline{X}$ and its interior by $\mathring{X}$.
A subspace $Z$ of $|G|$ is called \textit{standard subspace} of $|G|$ if $Z = \overline{H}$ where $H$ is a subgraph of $G$.

A \textit{circle} of $G$ is the image of a homeomorphism which maps from the unit circle $S^1 \subseteq \mathbb{R}^2$ to $|G|$.
The graph $G$ is called \textit{Hamiltonian} if there exists a circle in $|G|$ which contains all vertices of $G$, and hence, by the closedness of circles, also all ends of $G$.
This circle is called a \emph{Hamilton circle} of $G$.
We note that, for finite graphs, this coincides with the usual notion of Hamiltonicity.

The image of a homeomorphism which maps from the closed real unit interval $[0, 1]$ to $|G|$ is called an \textit{arc} in $|G|$.
For an arc $\alpha$ in $|G|$, we call the images of $0$ and $1$ of the homeomorphism defining the arc, the \emph{endpoints} of the arc.
A subspace $Z$ of $|G|$ is called \textit{arc-connected} if for every two points of $Z$ there is an arc in $Z$ which has these two points as its endpoints.
Finally, an arc in $|G|$ is called a \emph{Hamilton arc} of $G$ if it contains all vertices~of~$G$.

Let $\omega$ be an end of $G$ and $Z$ be a standard subspace of $|G|$ containing $\omega$.
Then we define the \textit{degree} of $\omega$ in $Z$ as a value in $\mathbb{N} \cup \lbrace \infty \rbrace$, namely the supremum of the number of edge-disjoint arcs in $Z$ that have $\omega$ as one of their endpoint.

We make a further definition with respect to end degrees which allows us to distinguish the parity of degrees of ends when they are infinite.
This definition has been introduced by Bruhn and Stein~\cite{cycle}.
We call the degree of an end~$\omega$ of $G$ in a standard subspace $X$ of $|G|$ \textit{even} if there is a finite set $S \subseteq V(G)$ such that for every finite set $S' \subseteq V(G)$ with $S \subseteq S'$ the maximum number of edge-disjoint arcs in $X$ with $\omega$ and some $s \in S'$  as endpoints is even.
Otherwise, we call the degree of $\omega$ in~$X$~\textit{odd}.

A \emph{topological tree} of $G$ is a connected standard subspace of $\vert G \vert$ which contains no circle~of~$G$.
A topological tree of $G$ is called \emph{spanning} if it contains all vertices of $G$.
We denote such a tree also as a \emph{topological spanning tree} of $G$.
Let $T$ be a subgraph of $G$ such that $\overline{T}$ is a topological tree of $G$.
We call a point $x \in \overline{T}$ a \emph{leaf} of $\overline{T}$ if either $x \in V(G)$ and has degree $1$ in $T$ or $x \in \Omega(G)$ and has degree $1$ in $\overline{T}$.

We extend notion $k$-leaf-connectedness to locally finite connected graphs as follows.
We call $G$ \emph{topologically $k$-leaf-connected} if $|V(G)| > k$ and given any set $S \subseteq V(G) \cup \Omega(G)$ with $|S| = k$, then $G$ has a topological spanning tree $\overline{T}$ whose set of leaves is precisely~$S$.
Similarly as for finite graphs, being topologically $2$-leaf-connected coincides with the notion for locally finite connected graphs $G$ of being \emph{Hamilton connected}, i.e., for any two distinct $x, y \in V(G) \cup \Omega(G)$ there exists a Hamilton arc of $G$ that has $x$ and $y$ as its endpoints.

\subsection{Tools}\label{subsec:Tools}

In this section we introduce some basic lemmas we will use to prove our results.
We begin with stating a lemma which allows us to make slightly limited, but still very helpful compactness arguments.

\begin{lemma}\label{koenig}\cite{Diestel.Buch}*{Lemma~8.1.2 (K\H{o}nigs Infinity Lemma)}
Let $(V_i)_{i \in \mathbb{N}}$ be a sequence of disjoint non-empty finite sets, and let $G$ be a graph on their union.
Assume that for every $n > 0$ each vertex in $V_n$ has a neighbour in $V_{n-1}$.
Then $G$ contains a ray $v_0v_1 \ldots$ with $v_n \in V_n$ for all $n \in \mathbb{N}$.
\end{lemma}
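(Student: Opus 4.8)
The plan is to construct the ray one vertex at a time, always keeping the partial path extendable to arbitrarily high levels. Call a vertex $v \in V_i$ \emph{extendable} if for every $m \ge i$ there is a path $v = w_i w_{i+1} \cdots w_m$ with $w_j \in V_j$ and $w_j w_{j+1} \in E(G)$ for each $j$; that is, $v$ is the bottom of forward paths reaching arbitrarily high levels. The whole argument then reduces to two facts: that some vertex of $V_0$ is extendable, and that every extendable vertex has an extendable neighbour on the next level. Granting these, one picks an extendable $v_0 \in V_0$, repeatedly passes to an extendable neighbour, and reads off the desired ray $v_0 v_1 v_2 \cdots$.

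First I would establish the base case, that some vertex of $V_0$ is extendable. Fix any $m \in \mathbb{N}$. Since $V_m \ne \emptyset$, pick $u \in V_m$ and trace backwards via the hypothesis: $u$ has a neighbour in $V_{m-1}$, that neighbour one in $V_{m-2}$, and so on down to $V_0$. This yields, for every $m$, a vertex of $V_0$ that starts a forward path reaching level $m$. Because $V_0$ is finite, the pigeonhole principle gives a single $v_0 \in V_0$ that starts such paths for infinitely many $m$; and since any forward path from $v_0$ reaching level $m$ contains prefixes reaching every smaller level, $v_0$ in fact starts forward paths reaching level $m$ for all $m$. Thus $v_0$ is extendable.

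Next comes the inductive step: if $v_i \in V_i$ is extendable, then some neighbour $v_{i+1} \in V_{i+1}$ of $v_i$ is extendable. As $V_{i+1}$ is finite, $v_i$ has only finitely many neighbours there, say $w^{(1)}, \dots, w^{(t)}$. If none were extendable, each $w^{(s)}$ would start forward paths only up to some maximal level $m_s$; but every forward path out of $v_i$ of length at least one passes through some $w^{(s)}$ at level $i+1$, so $v_i$ could reach only level $1 + \max_s m_s$, contradicting its extendability. Hence at least one neighbour is extendable, and I choose it as $v_{i+1}$. Iterating produces the ray.

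The main obstacle — and the only place where the hypotheses are truly used — is this inductive step, where finiteness of each $V_{i+1}$ is exactly what lets the `maximal level' argument convert non-extendability of all neighbours into non-extendability of $v_i$. Dropping the finiteness assumption breaks precisely this step, which is why finite levels are essential; the nonemptiness of every $V_m$ together with the backward-neighbour hypothesis is what drives the base case.
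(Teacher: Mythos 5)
Your proof is correct: the ``extendable vertex'' argument is the standard proof of K\H{o}nig's Infinity Lemma, and both the base case (pigeonhole on the finite set $V_0$ over the reversed backward traces) and the inductive step (finitely many neighbours in $V_{i+1}$, each with a bounded reach if none is extendable) check out. The paper itself gives no proof --- the lemma is quoted directly from Diestel's book --- and your argument is essentially the one found there, merely rephrased in terms of extendability rather than infinitely many descending paths agreeing on a common vertex.
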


An immediate consequence of Lemma~\ref{koenig} is the following proposition.

\begin{proposition}\label{ray}\cite{Diestel.Buch}*{Prop.\ 8.2.1}
Every infinite connected graph has a vertex of infinite degree or contains a ray.
\end{proposition}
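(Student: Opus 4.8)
The statement to prove is Proposition~\ref{ray}: every infinite connected graph has a vertex of infinite degree or contains a ray.

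\textbf{Plan of proof.} The plan is to split into two cases depending on whether the graph is locally finite. If some vertex has infinite degree we are immediately done, so I would assume every vertex has finite degree and aim to produce a ray via K\H{o}nig's Infinity Lemma (Lemma~\ref{koenig}). Fix an arbitrary vertex $v_0 \in V(G)$ and define, for each $i \in \mathbb{N}$, the distance classes $V_i = \{u \in V(G) : d(u,v_0) = i\}$, where $d$ denotes the usual graph distance. These sets are pairwise disjoint, and since $G$ is connected their union is all of $V(G)$.

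\textbf{Key steps.} First I would verify that each $V_i$ is \emph{finite}: this follows by induction from local finiteness, since $V_0 = \{v_0\}$ is finite, and every vertex in $V_{n}$ is adjacent to some vertex of $V_{n-1}$, so $V_n \subseteq N_1(V_{n-1}) \setminus N_1(V_0) \cup \cdots$; more cleanly, $V_n$ is contained in the neighbourhood of the finite set $V_{n-1}$, and a finite set of finite-degree vertices has a finite neighbourhood. Second, since $G$ is infinite but each $V_i$ is finite, infinitely many of the $V_i$ are non-empty; moreover connectedness forces that whenever $V_n \neq \emptyset$ we also have $V_{n-1} \neq \emptyset$, so in fact \emph{every} $V_i$ is non-empty (if some $V_m$ were empty then no vertex would lie at distance $> m$, making $V(G) = V_0 \cup \cdots \cup V_{m-1}$ finite, a contradiction). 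Third, the defining property of the distance classes gives exactly the hypothesis of Lemma~\ref{koenig}: for every $n > 0$, each vertex $u \in V_n$ lies at distance $n$ from $v_0$, so a shortest $v_0$--$u$ path has a penultimate vertex in $V_{n-1}$ adjacent to $u$; hence $u$ has a neighbour in $V_{n-1}$. Applying Lemma~\ref{koenig} to the sequence $(V_i)_{i \in \mathbb{N}}$ yields a ray $v_0 v_1 \dots$ with $v_n \in V_n$ for all $n$, which is the desired ray.

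\textbf{Main obstacle.} The argument is essentially a clean packaging of the distance decomposition, so there is no deep obstacle; the only point requiring care is the finiteness of each $V_i$, which is where local finiteness is genuinely used, and the observation that the $V_i$ cannot terminate (otherwise $G$ would be finite). I would make sure to state explicitly that the ray produced by K\H{o}nig's Lemma is a genuine subgraph of $G$ consisting of consecutive edges $v_n v_{n+1}$, each of which exists by the neighbour condition, so that $v_0 v_1 \dots$ is a one-way infinite path as required by the definition of a ray.
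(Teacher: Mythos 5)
Your proof is correct and follows exactly the route the paper intends: it presents Proposition~\ref{ray} as an immediate consequence of K\H{o}nig's Infinity Lemma (Lemma~\ref{koenig}), citing Diestel for the details, and your distance-class argument is the standard way to make that deduction explicit (compare also the paper's proof of Lemma~\ref{lem:dist-ray}, which uses the same decomposition). All the key points --- finiteness and non-emptiness of each distance class under the local finiteness assumption, and the neighbour condition needed to invoke the lemma --- are verified correctly.
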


Since we shall only consider locally finite connected graphs, we know by Proposition~\ref{ray} that such graphs contain a ray as soon as they are infinite.

The next lemma is also an immediate consequence of Lemma~\ref{koenig} and ensures the existence of distance increasing rays with respect to finite vertex sets in locally finite graphs.
For the sake of completeness we give a proof here.

\begin{lemma}\label{lem:dist-ray}
Let $G$ be an infinite locally finite connected graph and $X \subseteq V(G)$ be finite.
Then there exists a distance increasing ray w.r.t.~$X$.
\end{lemma}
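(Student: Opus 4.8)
The plan is to obtain the ray by applying K\H{o}nig's Infinity Lemma (Lemma~\ref{koenig}) to the sequence of distance layers around $X$. For each $i \in \mathbb{N}$ let $L_i$ denote the set of vertices at distance exactly $i$ from $X$, i.e.\ $L_i = N_i(X) \setminus N_{i-1}(X)$, with the convention $N_{-1}(X) = \emptyset$ so that $L_0 = X$. A ray that meets each layer $L_i$ in exactly one vertex is precisely a \emph{distance increasing} ray w.r.t.\ $X$, and its unique vertex in $L_0 = X$ is then its start vertex; so it suffices to produce such a ray.

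First I would verify that the family $(L_i)_{i \in \mathbb{N}}$ satisfies the hypotheses of Lemma~\ref{koenig}. The $L_i$ are pairwise disjoint by construction. Each is finite: since $X$ is finite and $G$ is locally finite, an easy induction shows that every $N_i(X)$ is finite, because $N_{i+1}(X) \subseteq N_i(X) \cup \bigcup_{v \in N_i(X)} N(v)$ is a finite union of finite sets, and hence $L_i \subseteq N_i(X)$ is finite as well. Each $L_i$ is moreover non-empty: as $G$ is connected we have $\bigcup_{i} N_i(X) = V(G)$, and were some $L_m$ empty, then $N_{m-1}(X) = N_m(X) = \cdots = V(G)$ would be finite, contradicting that $G$ is infinite.

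Next I would check the descent condition. If $v \in L_n$ with $n > 0$, then any shortest $X$--$v$ path in $G$ has length $n$, so its penultimate vertex lies at distance exactly $n-1$ from $X$, hence in $L_{n-1}$, and is adjacent to $v$; thus every vertex of $L_n$ has a neighbour in $L_{n-1}$. Applying Lemma~\ref{koenig} to $(L_i)_{i \in \mathbb{N}}$ then yields a ray $r_0 r_1 r_2 \cdots$ with $r_i \in L_i$ for all $i$. Since consecutive vertices are adjacent and the layers are disjoint, this is indeed a ray, and as $r_i$ lies at distance exactly $i$ from $X$ it meets each $L_i$ in exactly one vertex, with $r_0 \in X$. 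Hence $R := r_0 r_1 r_2 \cdots$ is the desired distance increasing ray w.r.t.\ $X$.

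The only mildly delicate points are the three hypotheses of K\H{o}nig's lemma, namely finiteness of the layers (which is exactly where local finiteness enters), their non-emptiness (where connectedness together with infiniteness enters), and the downward adjacency supplied by shortest paths. None of these presents a genuine obstacle; the proof is essentially the bookkeeping needed to package these observations so that Lemma~\ref{koenig} applies.
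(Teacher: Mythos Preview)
Your proof is correct and follows essentially the same approach as the paper: apply K\H{o}nig's Infinity Lemma to the distance layers around $X$. Your version is in fact more carefully written, since you explicitly pass to the disjoint layers $L_i = N_i(X) \setminus N_{i-1}(X)$ and verify finiteness, non-emptiness, and the downward-neighbour condition in detail, whereas the paper compresses all of this into a couple of sentences.
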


\begin{proof}
Since $G$ is locally finite and connected, we have that each $N_i(X)$ is non-empty, but finite.
Also each vertex in $N_{i+1}(X)$ has a neighbour in $N_{i}(X)$ by definition for every $i \in \mathbb{N}$.
By Lemma~\ref{koenig} we obtain the desired ray.
\end{proof}

We state a similar lemma about the existence of geodetic double rays.

\begin{lemma}\cite{Watkins.1986}*{Thm.\ 2.2} \label{LemmaGeo}
Let $G$ be a locally finite connected graph and $\omega_1$ and $\omega_2$ two distinct ends of $G$. Then there is a geodetic double ray that is the union of an $\omega_1$-ray and an $\omega_2$-ray.
\end{lemma}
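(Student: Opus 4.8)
The plan is to realise the double ray as a limit of shortest paths joining vertices that lie deep inside the two ends, and then to check separately that the limit is geodetic and that its two tails converge to $\omega_1$ and $\omega_2$. Fix a vertex $v$. Since $\omega_1 \neq \omega_2$, they are separated by some finite vertex set, and hence by the ball $N_k(v)$ for all sufficiently large $k$: any finite superset of a separator still separates, because $C(S',\omega)\subseteq C(S,\omega)$ whenever $S\subseteq S'$. Fix a threshold $k_0$ so that $N_{k_0}(v)$ separates $\omega_1$ from $\omega_2$. For each $n\geq k_0$ choose $a_n \in C(N_n(v),\omega_1)$ and $b_n \in C(N_n(v),\omega_2)$, and let $P_n$ be a shortest $a_n$--$b_n$ path. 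As $a_n$ and $b_n$ lie in different components of $G-N_{k_0}(v)$, each $P_n$ meets the finite set $N_{k_0}(v)$, so by pigeonhole some vertex $c\in N_{k_0}(v)$ lies on infinitely many $P_n$; I pass to that subsequence. Because $P_n$ is a path, $c$ splits it into a shortest $c$--$a_n$ path $Q_n$ and a shortest $c$--$b_n$ path $R_n$, with $d(a_n,b_n)=d(c,a_n)+d(c,b_n)$; moreover $d(c,a_n)\geq n-k_0\to\infty$ and likewise $d(c,b_n)\to\infty$, since vertices of $C(N_n(v),\omega_i)$ have distance greater than $n$ from $v$.

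Next I would run a diagonal compactness argument, in the spirit of Lemma~\ref{koenig}, simultaneously on both sides of $c$. Constructing vertices $u_1,u_2,\dots$ and $w_1,w_2,\dots$ one distance-level at a time and, at each level, passing to an infinite subset of indices on which the next pair of continuations is constant (possible since $G$ is locally finite and $|Q_n|,|R_n|\to\infty$), I obtain geodetic rays $A=cu_1u_2\cdots$ and $B=cw_1w_2\cdots$ together with nested infinite index sets $\mathcal{N}_1\supseteq\mathcal{N}_2\supseteq\cdots$ such that for $n\in\mathcal{N}_m$ the segment $u_m\cdots u_1\,c\,w_1\cdots w_m$ is a subpath of the shortest path $P_n$. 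Keeping a common index family is exactly what secures geodeticity of the concatenation: any $u_i$ and $w_j$ both appear on a single $P_n$ (take $n\in\mathcal{N}_{\max(i,j)}$) on opposite sides of $c$, so $d(u_i,w_j)=i+j$, and likewise $d(u_i,u_{i'})=|i-i'|$ and $d(w_j,w_{j'})=|j-j'|$. Hence distances in $G$ between vertices of the double ray $D$ obtained by joining $A$ and $B$ at $c$ agree with distances measured along $D$; in particular all these vertices are distinct, so $D$ is a genuine double ray and it is geodetic.

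It remains to identify the ends of the two tails. Here I would use that a shortest path issuing from $c$ has strictly increasing distance from $c$, and therefore meets each ball $N_k(c)$ in an initial segment only and never returns to it. Consequently the tail of $A$ beyond $N_k(c)$ lies in a single component of $G-N_k(c)$; testing with any $n\in\mathcal{N}_m$ for $m>k$, the vertex $u_{k+1}$ lies on $Q_n\subseteq P_n$ and its continuation reaches $a_n$, while for $n$ large one has $a_n\in C(N_k(c),\omega_1)$. Thus the tail of $A$ lies in $C(N_k(c),\omega_1)$ for every large $k$, that is $A\in\omega_1$; symmetrically $B\in\omega_2$. This yields the desired geodetic double ray that is the union of an $\omega_1$-ray and an $\omega_2$-ray.

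The hard part will be the interplay of the two middle steps: a naive limit of one-sided geodesics need not be geodetic across the two tails, nor respect the prescribed ends. Extracting both rays from a single family of shortest paths through the common vertex $c$ is precisely what forces the global shortest-path property, while the monotonicity of the distance-from-$c$ function along each geodesic ray is what pins the two tails to $\omega_1$ and $\omega_2$.
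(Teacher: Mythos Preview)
The paper does not prove this lemma; it is quoted from \cite{Watkins.1986} and used as a black box, so there is no in-paper argument to compare against. Your proposed proof is correct and follows the standard route: extract a common vertex $c$ hit by infinitely many shortest $a_n$--$b_n$ paths, run a diagonal K\H{o}nig-type selection simultaneously on both sides of $c$ so that every finite central segment of the limit double ray sits inside a single geodesic $P_n$, and then use the distance-monotonicity of geodesics from $c$ to pin the tails to $\omega_1$ and $\omega_2$. The key observation you isolate---that one must thin to a \emph{single} index family for both sides, else cross-tail geodeticity may fail---is exactly the point where a careless argument could go wrong, and you handle it correctly.
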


The next lemma tells us that arcs within $|G|$ have to cross a finite cut As soon as they meet both sides of the cut.

\begin{lemma}\label{jumping-arc}\cite{Diestel.Buch}*{Lemma 8.5.3 (Jumping Arc Lemma)}
Let $G$ be a locally finite connected graph and  ${F \subseteq E(G)}$ be a cut with the sides $V_1$ and $V_2$.
\begin{enumerate}
	\item \textit{If $F$ is finite, then $\overline{V_1} \cap \overline{V_2} = \emptyset$, and there is no arc in $|G| \setminus \mathring{F}$ with one endpoint in $V_1$ and the other in $V_2$.}
	\item \textit{If $F$ is infinite, then $\overline{V_1} \cap \overline{V_2} \neq \emptyset$, and there may be such an arc.}
\end{enumerate}
\end{lemma}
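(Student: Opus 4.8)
The plan is to analyse where the \emph{ends} of $G$ sit relative to the two sides, since these are the only points that $\overline{V_1}\cap\overline{V_2}$ could ever contain: $V_1$ and $V_2$ are disjoint vertex sets, and a small neighbourhood of an inner edge point contains no vertex, so $\overline{V_i}=V_i\cup\{\omega\in\Omega(G):\text{every neighbourhood of }\omega\text{ meets }V_i\}$. I would therefore reduce both parts to understanding, for a finite $S\subseteq V(G)$, on which side the component $C(S,\omega)$ lies.

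For part (1), let $S$ be the finite set of all endvertices of the finitely many edges of $F$. Deleting $S$ destroys every cross edge, so no path in $G-S$ passes from $V_1\setminus S$ to $V_2\setminus S$; hence each component of $G-S$, in particular each $C(S,\omega)$, lies entirely in one side. Consequently a basic neighbourhood of an end $\omega$ (the component $C(S,\omega)$ together with the ends it captures and short initial segments of the edges leaving $S$ into it) meets only the side containing $C(S,\omega)$, so $\omega$ lies in at most one of $\overline{V_1},\overline{V_2}$, giving $\overline{V_1}\cap\overline{V_2}=\emptyset$. For the arc statement I would exhibit a clopen splitting: the standard subspaces $\overline{G[V_1]}$ and $\overline{G[V_2]}$ are closed, disjoint (again because each end lives on exactly one side), and their union is exactly $|G|\setminus\mathring F$, since removing the open cross edges leaves only vertices, inner points of non-cross edges, and ends, all of which lie in one of the two subspaces. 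Two disjoint closed sets covering a space are clopen, and an arc is connected, so no arc in $|G|\setminus\mathring F$ can meet both $V_1$ and $V_2$.

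For part (2) I would use compactness of $|G|$. Picking the midpoint of each of the infinitely many cross edges $u_nv_n$ (with $u_n\in V_1$, $v_n\in V_2$) gives infinitely many points of the compact space $|G|$, hence an accumulation point $x$; local finiteness rules out $x$ being a vertex (a neighbourhood of radius below $\tfrac12$ contains no midpoint) and an inner edge point (whose small neighbourhoods meet a single edge), so $x$ is an end $\omega$. Any basic neighbourhood of $\omega$ whose boundary segments are shorter than $\tfrac12$ contains the midpoint of a cross edge only if the whole edge lies in $C(S,\omega)$; as infinitely many midpoints lie in it, $C(S,\omega)$ contains both endpoints of some cross edge and so meets both sides. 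Since this holds for every finite $S$, we obtain $\omega\in\overline{V_1}\cap\overline{V_2}$. Finally, to see that a connecting arc \emph{may} exist, the one-ended ladder with the two uprights as $V_1,V_2$ and the rungs as $F$ works: both uprights converge to the single end $\omega$, so their union with $\omega$ is an arc from $V_1$ to $V_2$ that avoids every rung and hence lies in $|G|\setminus\mathring F$.

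The main obstacle I anticipate is the bookkeeping between the combinatorial cut and the end topology: the whole content of part (1) is that finiteness of $F$ forces every end onto a single side, so that $\overline{G[V_1]}$ and $\overline{G[V_2]}$ genuinely partition $|G|\setminus\mathring F$ into clopen pieces, while the whole content of part (2) is that infiniteness of $F$ manufactures an end shared by both sides. Getting the neighbourhood bases of ends to interact correctly with $S$ and with the half-open cross edges is the delicate step; the arc claims then follow from connectedness and one explicit example.
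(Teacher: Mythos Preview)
The paper does not prove this lemma; it is merely quoted from Diestel's textbook as a standard tool in Section~\ref{subsec:Tools}. Your argument is correct and is essentially the standard proof: for part~(1) you use that the finite set $S$ of endvertices of $F$ forces every component of $G-S$---and hence every basic end-neighbourhood $C(S,\omega)$---onto a single side, giving the clopen partition $\overline{G[V_1]}\cup\overline{G[V_2]}$ of $|G|\setminus\mathring F$; for part~(2) you extract an end as an accumulation point of cross-edge midpoints via compactness and local finiteness, and the ladder example is the canonical witness that an arc may exist. One cosmetic remark: your claim that a radius-$<\tfrac12$ neighbourhood of a vertex contains \emph{no} midpoint is correct as stated (midpoints sit at distance exactly $\tfrac12$), but the essential point is just that such a neighbourhood contains at most finitely many midpoints, which already rules out a vertex as accumulation point.
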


The following lemma gives us a combinatorial criterion when standard subspaces of $|G|$ are topologically connected.

\begin{lemma}\label{top-con}\cite{Diestel.Buch}*{Lemma 8.5.5}
If a standard subspace of $|G|$ contains an edge from every finite cut  of $G$ which meets both sides, then it is topologically connected. 
\end{lemma}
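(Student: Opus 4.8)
The plan is to prove the contrapositive: assuming the standard subspace $Z=\overline H$ (with $H\subseteq G$ a subgraph) is \emph{not} connected, I would produce a single finite cut of $G$ that meets both sides of $Z$ yet contains no edge of $H$, directly contradicting the hypothesis. Since $G$ is locally finite and connected, $|G|$ is compact and Hausdorff, hence normal, and the closed set $Z$ is compact. A disconnection writes $Z=Z_1\sqcup Z_2$ with $Z_1,Z_2$ nonempty and clopen in $Z$, hence compact. The first observation I would record is that every edge of $H$ is \emph{monochromatic}: a closed edge $\bar e\subseteq Z$ is a connected arc, so it lies entirely in $Z_1$ or in $Z_2$. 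Consequently, if each vertex of $H$ is coloured by the part $Z_1$ or $Z_2$ containing it, then no edge of $H$ can join two differently coloured vertices, so the requirement that no $H$-edge crosses the cut comes for free. The entire difficulty is to extend this vertex colouring to all of $V(G)$ so that only finitely many $G$-edges change colour.

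Next I would control the ends. By normality choose disjoint open sets $O_1\supseteq Z_1$ and $O_2\supseteq Z_2$ in $|G|$. Every end $\omega$ has a basic open neighbourhood $\hat C(S_\omega,\omega)$ (a component of $G-S_\omega$ together with its ends and short initial pieces of the edges leaving it) that is contained in $O_1$ if $\omega\in Z_1$, in $O_2$ if $\omega\in Z_2$, or in the open set $|G|\setminus Z$ if $\omega\notin Z$. These neighbourhoods cover the end space $\Omega(G)$, which is compact because it is the complement in $|G|$ of the open $1$-complex $G$ (here local finiteness is essential, so that each vertex has a neighbourhood meeting no end). Extracting a finite subcover and letting $S$ be the union of the finitely many associated vertex sets, I get a finite $S$ for which $G-S$ has only finitely many components (each component sends an edge into the finite $S$, and only finitely many edges meet $S$). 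The key consequence is that each \emph{infinite} component $C$ of $G-S$ is monochromatic: any end of $C$ lies in one of the chosen neighbourhoods, which then contains the whole of $C$; two ends of $C$ forced into $O_1$ and $O_2$ respectively would give $C\subseteq O_1\cap O_2=\emptyset$, which is absurd, so all points of $Z$ in $C$ (its $H$-vertices and its ends lying in $Z$) belong to a single $Z_r$.

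Then I would assemble the cut. Assign every infinite component of $G-S$ entirely to the side $V_r$ of its colour, and to $V_1$ if it carries no point of $Z$; assign each of the finitely many vertices of $S$ and of the finite components according to its own colour, defaulting to $V_1$ when it lies outside $Z$. Counting the edges of the resulting cut $F=\delta(V_1)$: none lie inside an infinite component (these are monochromatic), finitely many lie inside the finitely many finite components or are incident with the finite set $S$, and there are no edges between distinct components of $G-S$; hence $F$ is finite. No edge of $H$ crosses $F$, by monochromaticity of $H$-edges together with the fact that each $H$-vertex is placed on the side of its own colour. Finally $Z$ meets both sides: since $Z_1\neq\emptyset$ it contains an $H$-vertex (placed in $V_1$) or an end $\omega\in Z_1\subseteq O_1$, which is approached by the $V_1$-vertices of its colour-$1$ component and hence lies in $\overline{V_1}$; symmetrically for $Z_2$. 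Thus $F$ is a finite cut meeting both sides of $Z$ with no edge of $H$, the desired contradiction. I would invoke the Jumping Arc Lemma only to phrase this cleanly, namely that the finite cut splits $|G|\setminus\mathring F$ into its two sides.

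I expect the main obstacle to be precisely the finiteness of the cut: a colouring faithful to $Z_1,Z_2$ on $V(H)$ but arbitrary elsewhere can cross back and forth infinitely often near the ends. The device that defeats this is the compactness of $\Omega(G)$ together with the basic end-neighbourhood system, which lets a single finite $S$ make every infinite component monochromatic; everything else is then genuinely finite. The remaining care-points — components and vertices carrying no point of $Z$, and checking that the whole-component assignment never conflicts with the colour of an individual $H$-vertex — are routine once infinite components are known to be monochromatic.
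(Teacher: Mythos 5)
The paper offers no proof of this statement to compare against: it is quoted verbatim as Lemma~8.5.5 of Diestel's book, and the authors use it as a black box. Your argument is correct and is essentially the standard textbook proof of that lemma: split a hypothetical disconnection of the compact space $Z=\overline H$ into disjoint compact clopen pieces, separate them by open sets, use compactness of the end space to find one finite $S$ making every infinite component of $G-S$ monochromatic (the key step, which you justify correctly via $C\subseteq \hat C(S_j,\omega_j)$ since $C$ is connected and avoids $S\supseteq S_j$), and read off a finite cut that $Z$ meets on both sides but whose edges all lie outside $H$.
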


Although topological connectedness and arc-connectedness differ for general topological spaces, the do not for closed subspaces of $|G|$ as shown by the following lemma.

\begin{lemma}\label{arc_conn}\cite{path-cyc-tree}*{Thm.\ 2.6}
If $G$ is a locally finite connected graph, then every closed topologically connected subset of $|G|$ is arc-connected.
\end{lemma}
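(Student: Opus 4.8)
The plan is to fix two distinct points $x,y\in X$ (the statement being trivial when $X$ is empty or a singleton) and to produce an arc between them inside $X$. The first step is to record the ambient structure: $|G|$ is a compact metrizable space, so the closed set $X$ is itself a compact connected metric space, i.e.\ a continuum, and every subspace of $X$ is metrizable. I would then reduce the construction of an arc to the construction of a \emph{path}. Since $|G|$ is Hausdorff, a standard fact of general topology guarantees that whenever two distinct points of a Hausdorff space are joined by a (possibly non-injective) path, the image of that path is a Peano continuum and therefore already contains an arc between the two points. Hence it suffices to find a continuous map $\gamma\colon[0,1]\to X$ with $\gamma(0)=x$ and $\gamma(1)=y$; injectivity is then recovered for free, which conveniently removes the most delicate part of a direct arc construction.

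The engine of the argument is the interaction between connectedness of $X$ and finite cuts, via the Jumping Arc Lemma (Lemma~\ref{jumping-arc}). For a finite cut $F=\delta(V_1)$ with sides $V_1,V_2$, part~(1) of that lemma gives $\overline{V_1}\cap\overline{V_2}=\emptyset$, so $|G|\setminus\mathring F$ is the disjoint union of the two closed sets $\overline{V_1}$ and $\overline{V_2}$. Consequently, if the connected set $X$ meets both $\overline{V_1}$ and $\overline{V_2}$, it cannot avoid $\mathring F$: it must contain an interior point of some edge of $F$. In other words, $X$ genuinely crosses every finite cut that separates two of its points, which is precisely the combinatorial shadow of connectedness recorded in Lemma~\ref{top-con}. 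I would exploit this along an exhaustion $V(G)=\bigcup_n S_n$ by finite vertex sets with each $G[S_n]$ connected. At level $n$ one contracts everything outside $S_n$ and uses the crossing property to route a walk within the trace of $X$ on $G[S_n]$, together with the finitely many cut edges $X$ is forced to cross, that joins the parts of $X$ containing $x$ and $y$; as $n$ grows these walks can be chosen nested and compatible, while the regions of $G$ left outside $S_n$ shrink toward the ends.

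Finally, I would assemble the limit. Arranging the level-$n$ approximations into the layered structure of the Infinity Lemma (Lemma~\ref{koenig}), or equivalently invoking compactness of $|G|$ directly, I would extract a single coherent limit path $\gamma$ whose pieces outside $S_n$ converge to the ends they approach, so that $\gamma$ is continuous as a map into $|G|$ and, because $X$ is \emph{closed} and each approximation stays within the shrinking region around $X$, has image contained in $X$. I expect the genuine difficulty to be concentrated here rather than in the combinatorics. The two points that need real care are: first, continuity of the limit map \emph{at the parameters that map into ends}, where $|G|$ need not be locally connected, which forces one to verify that the tail of each approximating walk is eventually trapped in a single component $C(S_n,\omega)$ so that it actually converges to one end; and second, checking that the limiting image lands in the closed set $X$ and not merely in $|G|$, which is exactly where closedness of $X$ must be combined quantitatively with the crossing property of the finite-level walks. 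Once such a path is secured, the Hausdorff path-to-arc fact cited above delivers the desired arc, completing the proof of Lemma~\ref{arc_conn}.
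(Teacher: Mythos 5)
The paper does not prove Lemma~\ref{arc_conn} at all: it is quoted from Diestel and K\"{u}hn \cite{path-cyc-tree}*{Thm.\ 2.6}, so the only meaningful comparison is with that source, whose proof follows essentially the architecture you describe (finite contractions, compatible finite-level paths glued by K\H{o}nig's lemma, and a limit argument whose crux is continuity at ends). Your road map is therefore the right one, but at each of the decisive points your text asserts what actually has to be proved. First, the crossing property you extract from Lemma~\ref{jumping-arc} is too weak: it only gives that $X$ contains an \emph{interior point} of some edge of the cut, whereas your level-$n$ routing needs $X$ to contain a \emph{whole} edge of every finite cut it crosses --- a walk cannot pass through a half-edge whose other half lies outside $X$, and nothing in your set-up prevents $X \cap e$ from being a proper subsegment of $e$. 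The stronger statement is true and not hard, but it must be stated and used: if $X$ contained no full edge of the finite cut $F$, pick in each $e \in F$ an interior point $m_e \notin X$; then $|G| \setminus \{ m_e : e \in F \}$ is the union of two disjoint open sets, one around each side of the cut, and these would disconnect $X$. The same strengthening is what makes your contracted ``shadow'' of $X$ connected: any bipartition of the contracted graph lifts, by local finiteness, to a finite cut of $G$, which $X$ must then cross with an entire edge.

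Second, K\H{o}nig's lemma (Lemma~\ref{koenig}) requires the levels to be \emph{finite}, and the set of walks in the $n$-th contracted graph is infinite; you must work with paths (or walks of bounded length) in the finite contractions, and set up compatibility with care, since the contraction of a level-$(n+1)$ path need not be a level-$n$ path. This is not mere bookkeeping: path-hood at every level is precisely what guarantees that the limit object crosses each cut $\delta(S_n)$ of your exhaustion a bounded number of times (a path visits each dummy vertex at most once), and that bounded-crossing property is the only thing that can make your continuity-at-the-ends step work. Third, that step --- parametrising the limit and proving continuity at parameters mapping to ends --- is the actual content of the theorem, and your proposal defers it rather than carries it out; ``invoking compactness of $|G|$ directly'' cannot replace it, because Hausdorff limits of paths in a continuum need not be paths, and the statement is simply false in general Peano continua: the Warsaw circle is a closed, connected, non-arc-connected subset of the disk. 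So some graph-specific control, such as the bounded cut-crossings above, is unavoidable. The parts of your argument that are complete --- the path-to-arc reduction in Hausdorff spaces, and membership of the limiting ends in $X$ by closedness --- are the routine ones; as written, the proposal is a correct outline of the known proof with its three essential verifications missing.
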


We shall make use of Lemma~\ref{top-con} and Lemma~\ref{arc_conn} to verify that ends in standard subspaces we construct have degree at least $1$ by showing that those spaces intersect every finite cut.
Similarly, the following theorem gives us a way to verify even degrees at ends.

\begin{thm}\label{cycspace} \cite{Diestel.Arx}*{Thm.\ 2.5}
Let $G$ be a locally finite connected graph. Then the following are equivalent for $D \subseteq E(G)$:
\begin{enumerate}
	\item $D$ meets every finite cut in an even number of edges.
	\item Every vertex and every end of $G$ has even degree in $\overline{D}$.
\end{enumerate}
\end{thm}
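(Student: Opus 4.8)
The plan is to treat the two implications separately, with the common technical bridge being a max-flow/min-cut correspondence that ties the \emph{topological} end degrees in $\overline{D}$ to the purely combinatorial quantities $|D \cap \delta(C(S,\omega))|$. Concretely, the lemma I would establish first is: for every end $\omega$ and every finite $S' \subseteq V(G)$, the maximum number of edge-disjoint arcs in $\overline{D}$ with endpoints $\omega$ and some vertex of $S'$ equals $\min_{S'' \supseteq S'} |D \cap \delta(C(S'',\omega))|$, the minimum taken over finite $S'' \supseteq S'$. The ``$\leq$'' part is immediate: each such arc starts in the region $C(S'',\omega)$ and ends in $S' \subseteq S''$, so by the Jumping Arc Lemma (Lemma~\ref{jumping-arc}) it must fully traverse an edge of the finite cut $\delta(C(S'',\omega))$, distinct edge-disjoint arcs use distinct such edges, and every edge traversed by an arc in $\overline{D}$ lies in $D$. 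The ``$\geq$'' part is where the work lies: one applies Menger's theorem in each finite ball around $S''$ and then threads the resulting finitely many paths out to $\omega$ by an application of K\H{o}nig's Infinity Lemma (Lemma~\ref{koenig}).

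Granting this lemma, the implication (1)$\Rightarrow$(2) is short. For a vertex $v$ the cut $\delta(v)$ is finite and $|D \cap \delta(v)|$ is exactly the degree of $v$ in $\overline{D}$, which is even by hypothesis. For an end $\omega$, every cut $\delta(C(S'',\omega))$ is finite, so by~(1) each $|D \cap \delta(C(S'',\omega))|$ is even; hence the minimum appearing in the lemma is even. Thus for \emph{every} finite $S'$ the maximum number of edge-disjoint $\omega$–$S'$ arcs in $\overline{D}$ is even, and this is precisely the definition of $\omega$ having even degree in $\overline{D}$, with any fixed finite set serving as the required witness $S$.

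For (2)$\Rightarrow$(1), fix a finite cut $F = \delta(X)$ and let $U$ be the finite set of endvertices of edges in $F$. Choosing a finite $S \supseteq U$ and summing the (even) $D$-degrees over the finite set $X \cap S$, a handshake computation reduces $|D \cap F|$ modulo $2$ to $\sum_i |D \cap \delta(K_i)|$, where $K_1,\dots,K_m$ are the components of $G - S$ lying on the $X$-side; finiteness of $m$ uses local finiteness, and the choice $S \supseteq U$ guarantees that no edge of $F$ meets any $K_i$, so that $\bigcup_i \delta(K_i)$ is exactly the edge set between $X \cap S$ and $X \setminus S$. For each \emph{finite} $K_i$ a further handshake gives $|D \cap \delta(K_i)|$ even directly. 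The infinite $K_i$ are the crux: here I would rerun the min-cut lemma inside $K_i$ together with a K\H{o}nig limiting argument, enlarging $S$ so that the $D$-intersections of the boundary cuts descend to the (even) end degrees of the ends captured by $K_i$, forcing each $|D \cap \delta(K_i)|$ to be even. Summing, $|D \cap F|$ is even.

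The step I expect to be the genuine obstacle is the ``$\geq$'' direction of the min-cut lemma and, closely related, the infinite-component analysis in (2)$\Rightarrow$(1): both require converting finitely many Menger paths into honest arcs reaching an end and controlling parity in the limit. A cleaner but heavier alternative for (2)$\Rightarrow$(1) would be to first show that a standard subspace all of whose vertex and end degrees are even decomposes into edge-disjoint circles, and then observe, via the Jumping Arc Lemma, that a single circle---being a homeomorphic image of $S^1$, whose removal of the finitely many interior points at which it meets $F$ leaves open arcs lying alternately in $\overline{X}$ and $\overline{V(G) \setminus X}$---crosses $F$ an even number of times; summing over the circles again yields the claim. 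Either way, the correspondence between topological degrees and finite cuts is the heart of the matter.
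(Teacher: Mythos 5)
First, a point of order: the paper does not prove Theorem~\ref{cycspace} at all --- it is imported verbatim from \cite{Diestel.Arx}*{Thm.~2.5} as a tool (the result is originally due to Diestel--K\"uhn for the cut condition and to Bruhn--Stein \cite{cycle} for the end-degree characterisation), so your attempt can only be compared with the proof in those sources. Measured against that, your skeleton is essentially the right one: your min--max lemma equating the maximum number of edge-disjoint $\omega$--$S'$ arcs in $\overline{D}$ with $\min_{S''\supseteq S'}|D\cap\delta(C(S'',\omega))|$ is a genuine lemma of Bruhn--Stein type (the ``$\leq$'' direction via Lemma~\ref{jumping-arc} is correct as you give it, and the ``$\geq$'' direction does go through by Menger in finite contractions plus Lemma~\ref{koenig} plus arc-connectedness of closed connected standard subspaces); with it, your (1)$\Rightarrow$(2) is complete, and in (2)$\Rightarrow$(1) your handshake reduction of $|D\cap F|$ to $\sum_i|D\cap\delta(K_i)|$ and the treatment of finite components are correct.

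The genuine gap is exactly where you place your trust in one vague sentence: the infinite components $K_i$. ``Enlarging $S$ so that the $D$-intersections of the boundary cuts descend to the (even) end degrees of the ends captured by $K_i$'' is not an argument, and as stated it cannot be one: an infinite $K_i$ may contain infinitely many ends, the evenness of each end degree is only an \emph{eventual} property whose witness set $S$ depends on the end, and the parity of $|D\cap\delta(\cdot)|$ is not stable between nested cuts, because the region between two nested cuts around an end may itself be infinite (containing further ends), so the handshake argument does not transfer parity and end parities cannot simply be summed. What is actually needed is the contrapositive descent that constitutes the hard core of \cite{cycle}: assuming some infinite region has odd $D$-boundary, split it by a larger finite separator, use the handshake over the finite middle part to find an infinite sub-region with odd $D$-boundary again (the finite sub-regions being even), and let K\H{o}nig-style nesting converge to a single end $\omega$; then --- and this step itself requires care --- reconcile the minimizing cuts in your min--max lemma with the constructed odd cuts to show that the arc-parity at $\omega$ fails to stabilise at even, contradicting (2). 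Note also that your ``cleaner but heavier alternative'' is circular: that an all-even standard subspace decomposes into edge-disjoint circles is essentially equivalent to the implication (2)$\Rightarrow$(1) you are trying to prove (it immediately puts $D$ in the topological cycle space), so it cannot serve as an independent route without redoing the same hard work.
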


The following lemma, combined with Lemma~\ref{top-con} gives us a nearly purely combinatorial characterisation of those standard subspaces of $|G|$ which form a circle.

\begin{lemma}\cite{cycle}*{Prop.\ 3} \label{circ}
Let $C$ be a subgraph of a locally finite connected graph $G$. Then $\overline{C}$ is a circle if and only if $\overline{C}$ is topologically connected and every $v \in V(C)$ has degree $2$ in $C$ as well as every $\omega \in \Omega(G) \cap \overline{C}$ has degree $2$ in $\overline{C}$.
\end{lemma}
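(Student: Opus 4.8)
The plan is to prove both implications, the forward one being routine and the reverse one carrying the real content. Throughout I will use that $\overline{C}$, being closed in the compact metrizable space $|G|$, is itself compact and metrizable; hence any continuous bijection from $S^1$ onto $\overline{C}$ is automatically a homeomorphism, and it suffices to produce such a map.

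For the forward implication, suppose $\overline{C}$ is a circle, i.e.\ the image of a homeomorphism $\sigma\colon S^1 \to |G|$. Connectedness of $S^1$ immediately gives that $\overline{C}$ is topologically connected. For the degree conditions, fix a point $x \in \overline{C}$ that is either a vertex $v \in V(C)$ or an end $\omega \in \Omega(G) \cap \overline{C}$. Removing $x$ leaves $\overline{C}\setminus\{x\}$ homeomorphic to an open interval, so $x$ has exactly ``two sides'' accumulating at it. Translating this local picture into $|G|$ shows that exactly two edges of $C$ are incident with $v$ (so $v$ has degree $2$ in $C$); an incidence of degree $\geq 3$ would split a punctured neighbourhood of $v$ into $\geq 3$ local pieces, and degree $\leq 1$ would make $v$ an endpoint, both impossible in $S^1$. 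The same local analysis, now counting edge-disjoint arcs ending at $\omega$, yields that $\omega$ has degree $2$ in $\overline{C}$; here I would invoke the Jumping Arc Lemma (Lemma~\ref{jumping-arc}) to ensure that any further arc reaching $\omega$ would register as a separate side.

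For the reverse implication, assume the three conditions hold. Since every vertex has degree $2$ in $C$, the graph $C$ is $2$-regular, so each of its components is a finite cycle or a double ray. By Lemma~\ref{top-con} together with Lemma~\ref{arc_conn}, topological connectedness of $\overline{C}$ upgrades to arc-connectedness. If some component is a finite cycle, one checks it is clopen in $\overline{C}$, so arc-connectedness forces $\overline{C}$ to equal that single finite cycle, which is a circle and we are done. Otherwise every component is a double ray, and the degree-$2$ condition at ends bounds by two the number of tails of $C$ converging to any given $\omega \in \Omega(G)\cap\overline{C}$ (tails within one double ray, or in different components, are automatically edge-disjoint, so converging tails produce edge-disjoint arcs at $\omega$). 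I would then encode the global structure as an auxiliary incidence graph whose nodes are the double-ray components and the ends in $\overline{C}$, with an edge whenever a tail converges to an end; arc-connectedness makes this graph connected, and its cyclic structure yields the order along which to parametrize $\overline{C}$ by $S^1$: traverse a double ray, pass through the end its tail converges to, enter the next double ray, and so on, closing up.

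The hard part will be the topology at the ends in the infinite case. Two issues need care. First, the auxiliary incidence graph could a priori be an infinite connected $2$-regular graph, i.e.\ a double ray rather than a cycle; ruling this out, so that the traversal genuinely closes into an $S^1$, requires using compactness of $\overline{C}$ to analyse how the double-ray pieces must accumulate. Second, and relatedly, an end may lie in $\overline{C}$ not because a single ray of $C$ converges to it but because infinitely many pieces accumulate there; such accumulation ends must also be assigned a position on $S^1$ and shown to have degree $2$, and the parametrization must be shown continuous precisely at these points. I would handle continuity by a neighbourhood-basis argument: by local finiteness a basic neighbourhood of an end is cut out by a finite separator, and I would verify that the preimage of such a neighbourhood under the constructed map is an arc, hence open, in $S^1$. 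Once continuity and bijectivity are in place, compactness of $S^1$ finishes the proof.
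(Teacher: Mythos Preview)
The paper does not prove this lemma at all: it is quoted verbatim from Bruhn and Stein~\cite{cycle}*{Prop.~3} as an external tool, with no proof supplied. So there is no ``paper's own proof'' to compare your attempt against; the authors simply import the result and use it.

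As for your sketch itself, the forward direction is fine. For the reverse direction your overall plan is reasonable, but the auxiliary incidence graph you build (nodes the double-ray components and the ends, edges recording tail-convergence) does not in general capture the situation. Wild circles in $|G|$ do exist: one can have infinitely many double-ray components whose closures accumulate at an end $\omega$ without any individual tail converging to $\omega$, yet $\omega$ still has end-degree~$2$ in $\overline{C}$ because two edge-disjoint arcs reach it through that accumulation. In such cases your incidence graph is an infinite $2$-regular graph (a double ray, not a cycle), and the ``accumulation ends'' are not nodes of it at all, so the cyclic order cannot be read off from it. You flag exactly this as the hard part, but the fix you gesture at --- assigning accumulation ends a position on $S^1$ and checking continuity at them --- is where the entire content lies, and nothing in the sketch explains how the position is determined or why the map closes up. The published proof in~\cite{cycle} handles this by a more topological route, essentially analysing $\overline{C}$ as a continuum in which every point has a punctured neighbourhood with exactly two components, rather than by combinatorially gluing pieces; that avoids having to confront the wild accumulation pattern head-on.
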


Our general strategy to verify that the closure $\overline{H}$ within $|G|$ of a subgraph $H$ of $G$, which we usually construct in countably many steps, is in fact a Hamilton circle of $G$ works as follows.
First we check that $H$ contains every vertex of $G$.
Then we prove that each vertex has degree $2$ in $H$, which is usually an easy task.
Now we prove that $H$ intersects every finite cut of $G$, but in even number of edges.
This already proves that $\overline{H}$ is topologically connected by Lemma~\ref{top-con} and that every end of $G$ has even degree, but least $2$ in $\overline{H}$ by Lemma~\ref{arc_conn} and Theorem~\ref{cycspace}.
By Lemma~\ref{circ} it only remains to bound the degrees of the ends, for which we use Lemma~\ref{jumping-arc} adjusted to the way we construct $H$.

\section{About the structure of the graphs considered in this paper}\label{sec:Structure}

In this section we shall analyse the structure of the graphs occur in the main results of this paper.
Furthermore, we shall give examples of the considered graphs at the end of this section.

Let us now start with a very easy observation about claw-free graphs.
The result is probably folklore and we do not give a proof here.
However, in case a proof is desired, consider for example~\cite{Heuer.2015}*{Prop.\ 3.7.}.

\begin{proposition} \label{2comp} 
Let $G$ be a connected claw-free graph and $S$ be a minimal vertex separator in $G$. Then $G-S$ has exactly two components.
\end{proposition}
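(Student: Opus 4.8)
The plan is to show that $G - S$ has \emph{at most} two components; since $S$ is a separator it has at least two, so it will then have exactly two. The entire argument rests on a single structural fact about minimal separators, which I would isolate first: \emph{every vertex of $S$ has a neighbour in every component of $G - S$}.

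First I would prove this fact by contradiction. Suppose some $s \in S$ has no neighbour in a component $C$ of $G - S$, and set $S' := S \setminus \{s\}$. I claim $S'$ is still a separator. Indeed, every neighbour of a vertex of $C$ that lies outside $C$ must belong to $S$ (as $C$ is a component of $G-S$); since $s$ is by assumption not adjacent to $C$, all such neighbours lie in $S'$ and are therefore deleted in $G - S'$. Hence $V(C)$ has no edge to the rest of $G - S'$, and because $G - S$ has at least one further component, $G - S'$ is disconnected. This contradicts the minimality of $S$, so the fact holds.

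Now I would argue the proposition itself, again by contradiction. Suppose $G - S$ had three distinct components $C_1, C_2, C_3$. Fix any $s \in S$ (note $S \neq \emptyset$, since $S$ separates). By the fact just established, $s$ has a neighbour $c_i \in C_i$ for each $i \in \{1,2,3\}$. Because $c_1, c_2, c_3$ lie in pairwise distinct components of $G - S$, they are pairwise non-adjacent in $G$. Consequently $G[\{s, c_1, c_2, c_3\}]$ consists of exactly the three edges $sc_1, sc_2, sc_3$, i.e.\ it is an induced claw centred at $s$ --- contradicting that $G$ is claw-free. Therefore $G - S$ has at most two components, and with the lower bound this gives exactly two.

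The main obstacle is the first step: pinning down the correct consequence of \emph{minimality} of the separator and verifying that deleting a vertex of $S$ with no neighbour in one component genuinely leaves that component cut off from the rest. Once the ``neighbour in every component'' property is secured, the claw appears immediately and the remainder is routine. I would also note that no finiteness of $G$ or of $S$ enters the argument, which is precisely why the same reasoning will carry over to the locally finite (and finite-separator) setting used later in the paper.
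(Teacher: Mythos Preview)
Your argument is correct. The paper does not actually supply its own proof of this proposition---it treats the result as folklore and merely points to \cite{Heuer.2015}*{Prop.~3.7} for a reference---so there is no in-paper proof to compare against. What you have written is precisely the standard argument one would expect behind that citation: first use minimality of $S$ to show that every $s\in S$ sends an edge into every component of $G-S$, then observe that three or more components would force an induced claw centred at $s$. Your remark that neither finiteness of $G$ nor of $S$ is needed is also on point and matches how the proposition is invoked later in the paper.
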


Next we generalise Theorem~\ref{finShep2.1} to locally finite graphs and obtain a structural characterisation of locally finite claw-free and net-free graphs.
Note that Theorem~\ref{finShep2.1} was essential for the proof of Theorem~\ref{thm:finShep2.9} and its generalisation will also be crucial for us to extend Theorem~\ref{thm:finShep2.9} to locally finite graphs.
We restate Theorem~\ref{finShep2.1} below.
Recall that we call a graph $G$ with a vertex $v \in V(G)$ \emph{distance-$2$-complete centered at $v$} if $G-v$ has exactly two components and in each component $C$ and for each $i \in \mathbb{N}$, the vertices at distance $i$ from $v$ in $G[V(C) \cup \{v\}]$ induce a complete graph.

\finShepstruc*

Now we extend Theorem~\ref{finShep2.1} to locally finite graphs by proving Theorem~\ref{thm:infShep2.1}, which is restated below.
We would like to point out that for one implication we can actually use the same proof which was given in~\cite{Shepherd.1991} for the corresponding implication for finite graphs of Theorem~\ref{finShep2.1}.
For the sake of completeness, we include this argument here.
Let us now restate the theorem we are proving.

\infShepstruc*

\begin{proof}
Suppose first for a contradiction that there exists a locally finite connected graph $G$ for which every minimal finite separator $S \subseteq V(G)$ and every $v \in S$, the graph $G - (S \setminus \{v \})$ is distance-$2$-complete centered at $v$, but $G$ contains a claw or a net as an induced subgraph~$H$.
Let $h_1, h_2, h_3$ denote the three vertices of degree $1$ in $H$ and let us call any other vertex of $H$ a \emph{central vertex} of~$H$.
By the local finiteness of $G$ we know that $S_1 := N(h_1)$ is a finite vertex set containing a central vertex of $H^1 := H$ such that $L_1 := \{ h_1, h_2, h_3 \}$ are not contained in the same component of $G-S_1$.

Next we shall recursively define sets $S_i, H^i$ and $L_i$ until $S_i$ is a $\subseteq$-minimal vertex separator of $G$ such that the following holds for every $i \in \mathbb{N}$ with $i \geq 1$:
\begin{enumerate}
    \item $H^i$ is either an induced claw or an induced net of $G$ and $H^i \subseteq G-(S_i \setminus \{ c_i \})$.
    \item $L_i$ consists of the vertices of degree $1$ in $H^i$.
    \item $S_i$ contains a central vertex $c_i$ of $H^i$.
    \item $S_{i+1} \subsetneqq S_{i}$ if $S_i$ is no $\subseteq$-minimal vertex separator of $G$.
    \item $S_i$ separates $G$ such that $L_i$ is not contained in the same component of $G-S_i$.
\end{enumerate}

We already found suitable sets for $i=1$ above.
Now suppose $S_i, c_i, H^i$ and $L_i$ have already been defined for some $i \geq 1$.
If there exists a vertex $s \in S_i$ such that $S_i \setminus \{ s \}$ still satisfies property (5), then set $S_{i+1} := S_i \setminus \{ s \}$, the graph $H^{i+1} := H^i$, the vertex $c_{i+1} := c_i$ and $L_{i+1} := L_i$.
In this way all properties (1)-(5) are still maintained.

So suppose there does not exist such a vertex $s \in S_i$ and $S_i$ is no $\subseteq$-minimal vertex separator of $G$.
Let $K_1, K_2, K_3, \ldots, K_k$ be the components of $G-S_i$ for some $k \in \mathbb{N}$.
Since $S_i$ is no $\subseteq$-minimal vertex separator of $G$, there exists a vertex $x \in S_i$ such that $S_i \setminus \{ x \}$ is still a vertex separator of $G$.
Hence, $x$ is not adjacent to any vertex of $K_j$, for some $j \in \mathbb{N}$, say $j=1$.
Since $S_i \setminus \{ x \}$ does not satisfy property (5) with respect to $L_i$, there exist two distinct components of $G-S_i$ which are both different from $K_1$ and contain vertices from $L_i$, say these are $K_2$ and $K_3$.
Furthermore, $x$ must be adjacent to vertices in $K_2$ and $K_3$.
Now pick $v \in S_i$ which is adjacent to some vertex $x_1$ in $K_1$.
Because $S_i \setminus \{ v \}$ does not satisfy property (5) either, we know that $v$ is adjacent to vertices $x_2 \in V(K_2)$ and $x_3 \in V(K_3)$ as well.
Therefore, $H^{i+1} := G[v, x_1, x_2, x_3]$ is an induced claw of $G$.
Set $L^{i+1}$ to be the vertices of degree $1$ in $H^{i+1}$, the vertex $c_{i+1} := v$ and $S_{i+1} := S_i \setminus \{ x \}$.
Now $S_{i+1}, c_{i+1}, H^{i+1}$ and $L_{i+1}$ satisfy properties (1)-(5), which completes the recursive definition.

Let $\ell \in \mathbb{N}$ such that $S_{\ell}$ is an $\subseteq$-minimal vertex separator of $G$.
By the properties (1) and (3) we know that $S_{\ell}$ contains a central vertex $c_{\ell}$ of $H^{\ell}$ such that $H^{\ell} \subseteq G-(S_{\ell} \setminus \{ c_{\ell} \})$.
In both cases, whether $H^{\ell}$ is an induced claw or an induced net of $G$, this contradicts that $G-(S_{\ell} \setminus \{ c_{\ell} \})$ is distance-$2$-complete centered at $c_i$, which holds by assumption.

Now let us prove the converse and assume that $G$ is a locally finite connected claw- and net-free graph.
Take an arbitrary finite minimal separator $S$ of $G$ and fix some $v \in S$.
Due to Proposition~\ref{2comp}, we get that $G - S$ has exactly two components $C_1$ and $C_2$.
For each $i \in \{ 1, 2 \}$ fix a finite connected subgraph $F_i \subseteq C_i$ which contains $N(S) \cap V(C_i)$.
Since $S$ is a minimal separator, there exists a shortest $v$--$f$ path $P$ in $G - (S \setminus \{ v \}) \cap C_i$ for every $f \in F_i$ and every $i \in \{ 1, 2 \}$.
Let $n \in \mathbb{N}$ be the maximum length of all such shortest paths $P$ for all $f \in F_i$ and every $i \in \{ 1, 2 \}$.
Now set
$$G_0 := G [ S \cup \bigcup^n_{i = 0} N_i(v) ].$$

We claim that $G_0$ fulfills the antecedent of Theorem~\ref{thm:finShep2.9}.
Clearly $G_0$ is clearly connected and finite, as $G$ is locally finite.
Furthermore, since $G_0$ is an induced subgraph of the claw-free and net-free graph $G$, we know that $G_0$ is claw-free and net-free as well.
By definition, $N(S) \cup S$ is contained in $G_0$.
Hence $S$ is again a finite minimal vertex separator in $G_0$.
So Theorem~\ref{thm:finShep2.9} implies that $G_0$ is distance-$2$-complete centered at $v$.

Now suppose we have already defined $G_i$ for some $i \in \mathbb{N}$.
Then we recursively define ${G_{i+1} := G[V(G_i) \cup N(V(G_i))]}$.
By definition, $G_{i+1}$ is a finite connected graph without induced claws and without induced nets, and $S$ is a minimal separator in $G_{i+1}$.
So again by Theorem~\ref{thm:finShep2.9} we know that $G_{i+1}$ is distance-$2$-complete centered at $v$ as well.
But now we get that for every $k \in \mathbb{N}$ each distance class $N_k(v)$ induces a clique in each component of $G-(S \setminus \{ v \})$ as witnessed by $G_k$. 
\end{proof}

Although we shall not need it in order to prove our main results, we would like to add the following structural result for locally finite claw-free and bull-free graphs.
It tells us that finite minimal separators in such graphs always induce cliques.
Note that such separators always exist, since $G$ is locally finite.
Since bull-free graphs must be net-free as well, the following result combined with Theorem~\ref{thm:infShep2.1} give us very much information about the structure of infinite locally finite claw-free and bull-free graphs.

\begin{lemma}\label{lem:bull-free_clique-sep}
In an infinite locally finite claw-free and bull-free graph every finite \linebreak $\subseteq$-minimal vertex separator induces a clique. 
\end{lemma}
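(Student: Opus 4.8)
The plan is to argue by contradiction: assuming that $S$ contains two non-adjacent vertices $u,v$, I would exhibit a forbidden induced subgraph, either a claw or a bull. First I would record the coarse structure forced by the hypotheses. By Proposition~\ref{2comp} the graph $G-S$ has exactly two components $C_1,C_2$, and by $\subseteq$-minimality of $S$ every vertex of $S$, in particular each of $u$ and $v$, has a neighbour in both $C_1$ and $C_2$; otherwise that vertex could be removed from $S$ and the remainder would still separate. Claw-freeness then forces, for every $s\in S$ and every $j\in\{1,2\}$, the set $N(s)\cap C_j$ to be a clique: two non-adjacent vertices there, together with any neighbour of $s$ in the other component, would form an independent triple inside $N(s)$, i.e.\ a claw centred at $s$. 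Finally, since $G$ is infinite while $S$ is finite, at least one component, say $C_1$, is infinite; this is the component I would exploit.

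Next I would bring in the layered structure. A bull-free graph is net-free (a net contains an induced bull), so Theorem~\ref{thm:infShep2.1} applies to the minimal finite separator $S$ and the vertex $u$: the graph $G-(S\setminus\{u\})$ is distance-$2$-complete centred at $u$. Restricting to $C_1$, this produces clique-layers $L_1=N(u)\cap C_1,\,L_2,\,L_3,\dots$, where $L_i$ is the set of vertices at distance $i$ from $u$ in $G[V(C_1)\cup\{u\}]$. Because $C_1$ is infinite and locally finite, each layer is finite yet infinitely many are non-empty, so the layering runs out to infinity; a clique spans at most two consecutive layers, so the finite clique $B_1:=N(v)\cap C_1$ meets only boundedly many layers. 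The key point is this asymmetry: $v$ reaches only a bounded initial segment of $C_1$, whereas the layering continues indefinitely, and it is precisely this surplus that the infiniteness hypothesis supplies and that will furnish an ``outer'' horn.

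Finally I would assemble the bull. In the typical situation $v$ has two adjacent neighbours $b,b'\in B_1$, so $\{v,b,b'\}$ is a triangle; I would take as one horn a neighbour $c\in N(v)\cap C_2$ (which exists, and which is automatically non-adjacent to everything in $C_1$, so it can never clash with a second horn chosen inside $C_1$), and as the second horn a vertex $h$ adjacent to exactly one of $b,b'$ and not to $v$, drawn from an adjacent layer (an ``inner'' vertex of $L_{j-1}$, possibly $u$ itself, or an ``outer'' vertex guaranteed by the infinite continuation of the layering). Checking via the distance-layering that $h$ attaches to exactly one triangle vertex and lies outside $B_1$ then makes $\{v,b,b',c,h\}$ an induced bull. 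The degenerate case in which all four cliques $N(u)\cap C_j,\,N(v)\cap C_j$ are singletons offers no triangle among the neighbourhoods; here I would fall back on the infinite layering of $C_1$, where either some deep layer has at least two vertices (yielding a triangle to which inner and outer horns attach) or every layer is a singleton, so that $G[V(C_1)\cup\{u\}]$ is a ray and a layer-vertex reached by $v$ has three pairwise non-adjacent neighbours, namely its two ray-neighbours and $v$, i.e.\ a claw. I expect the main obstacle to be exactly this bookkeeping: organising the cases according to how $B_1$ sits among the layers and how the four neighbourhood-cliques interrelate, and verifying in each case that the chosen horns meet the triangle in a single vertex, avoid $N(v)$, and are mutually non-adjacent, with the same-layer and $j=1$ boundary configurations requiring care in selecting $c$ and $h$. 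Conceptually the difficulty is that the only genuine obstructions to the statement are the even-cycle-like graphs, which really are claw- and bull-free but necessarily finite, so the entire argument hinges on extracting a horn from the infinite component.
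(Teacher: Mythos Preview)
Your route is genuinely different from the paper's, and as a plan it is plausible but not executed: the ``bookkeeping'' you defer is essentially the whole proof. Two points where the sketch is thinner than it looks. First, your case split is not exhaustive: the obstruction to building a triangle on $v$ is simply $|B_1|=1$, not ``all four neighbourhood cliques are singletons'', so the situation $|B_1|=1$ with, say, $|N(u)\cap C_1|\ge 2$ falls between your main case and your degenerate case. Second, even when $|B_1|\ge 2$, nothing guarantees that a vertex of an adjacent layer is adjacent to \emph{exactly one} of your chosen $b,b'$; if every candidate in $L_{j\pm 1}\setminus B_1$ hits both, you must instead produce a claw (centred at some $b\in B_1$ with pairwise non-adjacent neighbours $v$, $h^-\in L_{j-1}\setminus B_1$, $h^+\in L_{j+1}\setminus B_1$), and this sub-branch is absent from your outline. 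None of this looks fatal, but it means the real argument remains to be written.

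The paper avoids Theorem~\ref{thm:infShep2.1} entirely and gives a short direct proof. Let $C$ be an infinite component of $G-S$, take a shortest $u$--$v$ path $P$ in $G[V(C)\cup S]$, and let $R=r_0r_1r_2\dots$ be a distance-increasing ray with respect to $V(P)$ whose start vertex $r_0\in V(P)$ lies as close to $u$ along $P$ as possible. If $r_0=u$, pick $w\in N(u)\cap C'$ and let $u^+$ be the $P$-neighbour of $u$; claw-freeness of $G[w,u,u^+,r_1]$ forces $r_1u^+\in E(G)$, and then $G[w,u,u^+,r_1,r_2]$ is an induced bull. If $r_0$ is an inner vertex of $P$ with $P$-neighbours $x^-,x^+$ (where $x^-$ is nearer $u$), then $x^-x^+\notin E$ since $P$ is shortest and $x^-r_1\notin E$ by the extremal choice of $r_0$, so claw-freeness forces $x^+r_1\in E$, and $G[x^-,r_0,x^+,r_1,r_2]$ is the bull. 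Your appeal to the heavy structural theorem trades this two-case argument for a substantially longer one; the paper's path--ray trick is the cleaner idea to take away.
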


\begin{proof}
Let $G$ be a graph as in the statement of the lemma and let $S$ be a finite $\subseteq$-minimal vertex separator of $G$.
Suppose for a contradiction that $S$ contains two distinct vertices $u, v$ such that $uv \notin E(G)$.
Since $G$ is claw-free, we know by Proposition~\ref{2comp} that $G-S$ has precisely two components, call them $C$ and $C'$.
Furthermore, we know by Proposition~\ref{ray} that $G$ contains a ray.
So $G$ has an end $\omega$.
Since $S$ is finite, every $\omega$-ray has a tail in either $C$ or $C'$, say in $C$.
Now choose a shortest $u-v$ path $P$ in $G[V(C) \cup S]$.
Let $R = r_0r_1 \ldots$ be a distance increasing ray w.r.t.~$V(P)$ inside $G[V(C) \cup S]$ whose start vertex on $P$ lies as close to $u$ as possible.
Such a ray exists due to Lemma~\ref{lem:dist-ray}.
Next we distinguish two cases.

\begin{case}
$r_0 = u$.
\end{case}

Since $S$ is minimal, $u$ has a neighbour $w$ in $C'$.
Let $u^+$ be the vertex neighbouring $u$ on~$P$.
Note that $u^+ \neq v$ by assumption.
Since $G[w,u,u^+,r_1]$ is no induced claw and since $w$ is separated from $r_1$ and $u^+$ by the separator $S$, we know that $r_1u^+ \in E(G)$.
Now $G[u_l, u, u^+, y, y']$ is an induced bull as $r_2$ has distance $2$ from $P$ and, therefore, cannot be adjacent to $u, u^+$ or $w$.
We derived a contradiction.

\begin{case}
$r_0 \neq u$.
\end{case}

In this case we have that $x := r_0$ is an inner vertex of $P$ since $uv \notin E(G)$.
Let $x^+$ and $x^-$ denote the two neighbours of $x$ on $P$, where $x^-$ lies closer to $u$ on $P$ than $x^+$.
Consider the graph $G[x^-, x, x^+, r_1]$, which cannot be an induced claw by assumption.
Since $P$ is a shortest $u$-$v$ path, we know that $x^+x^- \notin E(G)$.
Also, we know that $x^-r_1 \notin E(G)$ as it would yield another valid choice for the ray $R$, but with a start vertex $x^-$ closer to $u$ on $P$ than $r_0$, contradicting our choice of $R$.
So the edge $x^+r_1$ exists.
Now, however, the graph $G[x^-, x, x^+, r_1, r_2]$ is an induced bull as $r_2$ has distance $2$ to $P$; a contradiction.
\end{proof}

Next let us prove Theorem~\ref{bullreduct}.
To ease the readability of the paper, let us restate the theorem here.

\bullreduct*

\begin{proof}
Suppose for a contradiction that $G$ contains an induced bull $B$.
Let $b_1$ and $b_2$ denote the horns of $B$, and let $z$ denote the vertex of degree $2$ in $B$ (cf.\ Figure~\ref{fig:subgraphs}).
Furthermore, let $a_1$ and $a_2$ denote the other two vertices of $B$ such that $a_ib_i \in E(B)$ for every $i \in \{ 1, 2\}$.
Since the horns of $B$ have a common neighbour in $G-B$, let us fix such a common neighbour $c$ of $b_1$ and $b_2$ in $G-B$.
As $G$ is infinite, locally finite and connected, there exists some distance increasing ray $R =r_0r_1 \ldots $ w.r.t.\ ${V(B) \cup \{ c \}}$
in~$G$.
We shall distinguish four possible cases where $R$ might start, and derive contradictions for each case.

\setcounter{case}{0}
\begin{case}
$r_0 = z$.
\end{case}

For this case note first that $r_1b_i \notin E(G)$ for every $i \in \{ 1, 2\}$ as otherwise $G[r_1, z, r_2, b_i]$ would be an induced claw.
Next let us verify that $r_1c \notin E(G)$.
Suppose for a contradiction the edge $r_1c$ exists.
Then $G[c, b_1, b_2, r_1]$ is an induced claw as $b_1b_2 \notin E(G)$ and $r_1b_i \notin E(G)$ by the argument above; a contradiction.
In particular, this implies $c \neq r_1$.

Furthermore, we can assume without loss of generality that $G[r_i, a_1, a_2, b_1, b_2]$ is not an induced bull for every $i \geq 1$.
To see this note that $r_i \notin N(V(B))$ for every $i \geq 2$ as $R$ is distance increasing w.r.t.\ ${V(B) \cup \{ c \}}$.
So if $B' = G[r_1, a_1, a_2, b_1, b_2]$ is an induced bull, then $r_1r_2 \ldots$ is a distance increasing ray w.r.t.\ $V(B') \cup \{ c \}$ which starts at the vertex of degree $2$ of $B'$ as well.
Then we would consider $B'$ instead of $B$.
By the previous argument we know that not both of the edges $r_1a_1$ and $r_1a_2$ can exist.
Now suppose for a contradiction that only one of these edges exists, say $r_1a_1$.
Then $G[a_2, r_1, a_1, a_2]$ is an induced claw, since $a_1b_2 \notin E(G)$ as $B$ is an induced bull, $r_1b_2 \notin E(G)$ by the argument above and $r_1a_1 \notin E(G)$ as this would force $G[r_1, a_1, a_2, b_1, b_2]$ to be an induced bull.
Since the analysis for the edge $r_1a_2$ works analogously, we know that $r_1a_1, r_1a_2 \notin E(G)$.

Now we can conclude that $B'' = G[r_1, a_1, z, b_1, a_2]$ is an induced bull with horns $r_1$ and~$b_1$.
So there exists some $c' \in V(G-B'')$ which is a common neighbour of $r_1$ and $b_1$.
As $r_1$ is neither adjacent to $c$ nor to $b_2$, we know that $c' \neq c, b_2$ and since $R$ is distance increasing w.r.t.\ ${V(B) \cup \{ c \}}$ we get $c' \neq r_i$ for all $i \in \mathbb{N}$.
Because $G[r_1, r_2, z, c']$ is not an induced claw and $zr_2 \notin E(G)$, there are two options how this can be avoided:
\begin{equation}\label{c'r_2}
    c'r_2 \in E(G).
\end{equation}
and 
\begin{equation}\label{zc'}
    zc' \in E(G)
\end{equation}

Note that \ref{zc'} and \ref{c'r_2} cannot both hold because then $G[c', b_1, z, r_1]$ would be an induced claw.

Let us first deal with the case that \ref{c'r_2} holds.
Consider the graph $B_1 = G[c', r_1, r_2, r_3, z]$.
If $B_1$ is not an induced bull, then this can only happen because the edge $zc'$ exists, which means~\ref{zc'} holds as well; a contradiction.
So $B_1 = G[c', r_1, r_2, r_3, z]$ is an induced bull.
Then, however, its horns $z$ and $r_3$ would need to have a common neighbour contradicting the property of $R$ being distance increasing w.r.t.\ ${V(B) \cup \{ c \}}$.

So we are left with the situation where~\ref{zc'} holds.
Consider the $B_2 = G[c', z, r_1, r_2, b_1]$.
The only edge which can prevent $B_2$ from being an induced bull would be $c'r_2$, which cannot exist because~\ref{zc'} holds.
So $B_2$ is an induced bull, whose horns $b_1$ and $r_2$ need to have a common neighbour $c'' \in V(G-B_2)$.
Now consider $G[r_1, r_2, r_3, c'']$, which is not allowed to be an induced claw.
Hence, the edge $c''r_1$ exists.
Finally consider $B''' = G[z, r_1, r_2, r_3, c'']$, which cannot be an induced bull since then $z$ and $r_3$ would need to have a common neighbour contradicting the property of $R$ being distance increasing w.r.t.\ ${V(B) \cup \{ c \}}$.
But the only edge which can prevent $B'''$ from being an induced bull is $zc''$.
This leads to the contradiction that $G[b_1, c'', z, r_2]$ is an induced claw and completes Case~1.

\begin{case}
$r_0 = a_i$ for some $i \in \{ 1, 2 \}$.
\end{case}

Say, without loss of generality, $r_0 = a_2$ and consider $G[z, a_2, b_2, r_1]$, which cannot be an induced claw.
The edge $zb_2$ cannot exist because $B$ is an induced bull.
The edge $zr_1$ cannot exist because then $zr_{1}r_{2} \ldots$ would be a distance increasing ray w.r.t.\ ${V(B) \cup \{ c \}}$ starting in $z$, which leads to a contradiction as in Case~1.
Hence, the edge $b_2r_1$ needs to exist.
Now consider $B_2 = G[z, a_2, b_2, r_1, r_2]$, which is an induced bull.
So the horns $z$ and $r_2$ of $B_2$ have a common neighbour $c' \in V(G-B_2)$.
If $c' \in V(B)$, then we get a contradiction to $R$ being distance increasing w.r.t.\ ${V(B) \cup \{ c \}}$.
Otherwise, however, we obtain a distance increasing ray $zc'r_2r_3 \ldots$ w.r.t.\ ${V(B) \cup \{ c \}}$ which starts in $z$.
This leads to a contradiction as in Case~1.
So we have completed our consideration of Case~2.

\begin{case}
$r_0 = c$.
\end{case}

Since $b_1b_2 \notin E(G)$ and $G[c, b_1, b_2, r_1]$ is not an induced claw, we know one of the edges $b_1r_1$ or $b_2r_1$ must exist, say without loss of generality $b_2z'$.
If $B_3 = G[r_1, c, b_1, b_2, a_2]$ were be an induced bull, its horns $b_2$ and $a_2$ would need to have a common neighbour $c''$ in $V(G-B_3)$.
Note that $c' \neq r_i$ and $c'r_{i+3} \notin E(G)$ for every $i \in \mathbb{N}$ as $R$ is distance increasing with respect to $V(B) \cup \{ c \}$.
Now, however, $r_1r_2 \ldots$ is a distance increasing ray w.r.t.~ $V(B_3) \cup \{ c' \}$ which starts at $r_1$.
This is the same situation as in Case~1 and, therefore, leads towards a contradiction.

So $B_3$ is no induced bull and only three edges could possibly witness this, namely $r_1b_1$, $r_1a_2$ or $ca_2$.
First, if $r_1b_1 \in E(G)$, then consider $r_1$ instead of $c$ as the common neighbour of $b_1$ and $b_2$ outside of $B$ and $r_1r_2 \ldots$ as the distance increasing ray w.r.t.\ $V(B) \cup \{ r_1 \}$.
Now we are again in the situation of Case~3 but know that $r_2b_1, r_2b_2 \notin E(G)$, implying that $G[r_1, b_1, b_2, r_2]$ is an induced claw; a contradiction.
Hence, we conclude that $r_1b_1 \notin E(G)$.

Second, suppose that $r_1a_2 \in E(G)$.
Then $a_2r_1r_2 \ldots$ would be distance increasing ray w.r.t.\ $V(B) \cup \{ c \}$ starting at $a_2$, which leads to a contradiction as in Case~2.

Third, suppose $ca_2 \in E(G)$, but $r_1b_1, r_1a_2 \notin E(G)$.
Then $G[c, b_1, a_2, r_1]$ is an induced claw.
This contradiction completes the analysis of Case~3.

\begin{case}
$r_0 = b_i$ for some $i \in \{ 1, 2 \}$.
\end{case}

Let, without loss of generality, $r_0 = b_2$ and consider $G[z, a_2, b_2, r_1]$, which cannot be an induced claw.
By Case~2 and Case~3 we know that $r_1a_2, r_1c \notin E(G)$.
So the edge $ca_2$ must exist.
Consider the bull $B'_3 = G[b_2, c, a_2, b_1, z]$.
If $B'_3$ is induced, then $R$ is distance increasing w.r.t.\ ${V(B'_3) \cup \{ a_1 \} =V(B) \cup \{ c \} }$ and $a_1$ is a common neighbour of the horns $b_1$ and $z$ of $B'_3$.
This puts us again in the situation of Case~1 and leads to a contradiction.

So let us finally consider the case that $B'_3$ is not induced.
The only reason for this is the existence of the edge $cz$.
Then, however, $G[c, b_1, b_2, z]$ is an induced claw; a contradiction.
\end{proof}

We continue by showing that every locally finite connected claw-free graph with at least three ends contains a net, and therefore also a bull, as an induced subgraph.

\begin{lemma}\label{lem:2ends:net}
Every locally finite, connected claw-free and net-free graph has at most two ends.
\end{lemma}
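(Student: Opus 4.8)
The plan is to prove the contrapositive directly: assuming $G$ is claw-free and net-free, I would bound its number of ends by two, exploiting the structural characterisation of Theorem~\ref{thm:infShep2.1}. Since $G$ is infinite and locally finite, the neighbourhood of any vertex is a finite separator, so $G$ possesses a minimal finite separator $S$; I would fix such an $S$ together with a vertex $v \in S$. Proposition~\ref{2comp} gives that $G-S$ has exactly two components $C_1, C_2$, and Theorem~\ref{thm:infShep2.1} tells us that $G-(S\setminus\{v\})$ is distance-$2$-complete centred at $v$; consequently, for each $j \in \{1,2\}$ and each $i$, the distance class $N_i(v)$ computed inside $G[V(C_j)\cup\{v\}]$ induces a clique.

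The crux would be the claim that each branch carries at most one end of $G$, i.e.\ that any two rays with tails in the same component $C_j$ are equivalent in $G$. Here I would use that the distance classes of $G[V(C_j)\cup\{v\}]$ are finite (local finiteness) and are cliques (above): an infinite ray with a tail in $C_j$ must reach unbounded distance from $v$, and since consecutive vertices of a path change distance by at most one, such a ray meets $N_i(v)$ for every sufficiently large $i$. Given two such rays $R, R'$ and any finite $T \subseteq V(G)$, only finitely many distance classes meet $T$, say none beyond index $m$; passing to the tails lying beyond the last vertex in $\bigcup_{i\le m} N_i(v)$ keeps both rays inside the $T$-avoiding set $\bigcup_{i>m} N_i(v)$, and the vertices of these tails in a common clique $N_i(v)$ are equal or adjacent, reconnecting the tails by an edge that avoids $T$. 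As $T$ was arbitrary, $R$ and $R'$ cannot be separated by a finite set, so they determine the same end.

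To finish, I would observe that every end of $G$ is represented by a ray whose tail avoids the finite set $S$ and therefore lies in $C_1$ or in $C_2$; by the branch claim each component accounts for at most one end, and an end represented in $C_1$ is distinct from one represented in $C_2$, since the finite set $S$ separates the two components. Hence $G$ has at most two ends, which is exactly the contrapositive of the lemma. I expect the branch claim to be the main obstacle: the delicate point is that a ray need not be distance-monotone and may repeatedly fall back to low distance classes, so one cannot simply track it layer by layer, but must first truncate it past every class meeting $T$ and only then exploit a single clique-layer to splice the two tails together while steering clear of $T$.
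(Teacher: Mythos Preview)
Your proof is correct but takes a genuinely different route from the paper. The paper argues by contradiction: assuming three ends $\omega_1,\omega_2,\omega_3$, it takes a geodesic double ray $D$ joining $\omega_2$ and $\omega_3$ (Lemma~\ref{LemmaGeo}), separates $D$ from $\omega_1$ by a finite set $S$, picks a shortest $S$--$D$ path $P$ meeting $D$ as far as possible towards $\omega_3$, and then explicitly exhibits an induced net on six vertices around the meeting point, using claw-freeness and the geodesic property of $D$ to rule out the extra edges. Your argument instead feeds everything through Theorem~\ref{thm:infShep2.1}: since each component of $G-S$ is stratified into finite clique layers from $v$, any two rays with tails in that component can be spliced inside a sufficiently high layer avoiding any prescribed finite set, so each side carries at most one end. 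There is no circularity, since Theorem~\ref{thm:infShep2.1} is proved earlier without appealing to the present lemma. The trade-off is that the paper's proof is self-contained (needing only the geodesic double ray lemma, not the structural characterisation) and yields as a by-product the corollary that every vertex lies within distance~$1$ of such a double ray; your proof is shorter once Theorem~\ref{thm:infShep2.1} is available and makes transparent \emph{why} the end count is bounded, namely because the clique layering collapses each side to a single end. One terminological quibble: you call this ``the contrapositive'', but what you actually give is a direct proof; it is the paper that argues via the contrapositive.
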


\begin{proof}
Suppose for a contradiction that $G$ is a locally finite connected, claw-free, net-free graph with at least three different ends $\omega_1, \omega_2$ and $\omega_3$.
Let $D$ be a geodesic double ray containing an $\omega_2$-ray and an $\omega_3$-ray, which exists due to Lemma~\ref{LemmaGeo}.
Let $S$ be a finite vertex set which is at least in distance $2$ from $D$ and which separates $D$ from $\omega_1$, i.e., every $\omega_1$-ray with start vertex in $D$ meets $S$.
To see that such a vertex set exists, first pick a finite vertex set $S' \subseteq V(G)$ which pairwise separates $\omega_1, \omega_2$ and $\omega_3$.
Hence, only a finite set $F$ of vertices of $V(D)$ is contained in $V(C(S', \omega_1))$.
Now set $S = N_2(S' \cup F) \cap C(S' \cup F, \omega_1)$, which is still a finite set since $S'$ as well as $F$ are finite and $G$ is locally finite.
Furthermore, $S$ now separates $D$ from $\omega_1$ as desired.

Now consider all shortest $S$-$D$-paths.
Among such shortest paths let $P$ be one that meets $D$ closest to $\omega_3$, say at vertex $d$, i.e., there exists no $S$-$D$-path with an endvertex $d' \neq d$ on the $\omega_3$-ray that is contained in $D$ and starts in $d$.
Choosing $d$ in such a maximal way is possible since $S$ is a finite set and due to the local finiteness each distance class starting from $S$ is a finite set as well. 
Let $b$ be the neighbour of $d$ that lies on the $\omega_2$-ray $R_2$ that is contained in $D$ and starts at $d$.
Further denote the neighbour of $b$ on $R_2$ that is different from $d$ by $y$.
Let $c$ be the neighbour of $d$ that lies on the $\omega_3$-ray $R_3$ which is contained in $D$ and starts at $d$.
Finally, let $a$ be the neighbour of $d$ on $P$ and $z$ be the neighbour of $a$ on $P$ different from $d$.

We first note that there is no edge $ac$ since this would yield a path of the same length as $P$ ending closer to $\omega_3$.
Furthermore, there is no edge $bc$, otherwise $D$ would not be geodesic. Since $G[a,b,c,d]$ cannot be an induced claw, we know that $ab \in E(G)$. This situation would look like depicted in Figure~\ref{KeineDreiEnden}.

\begin{figure}[htbp]
\centering
\includegraphics[width=12cm]{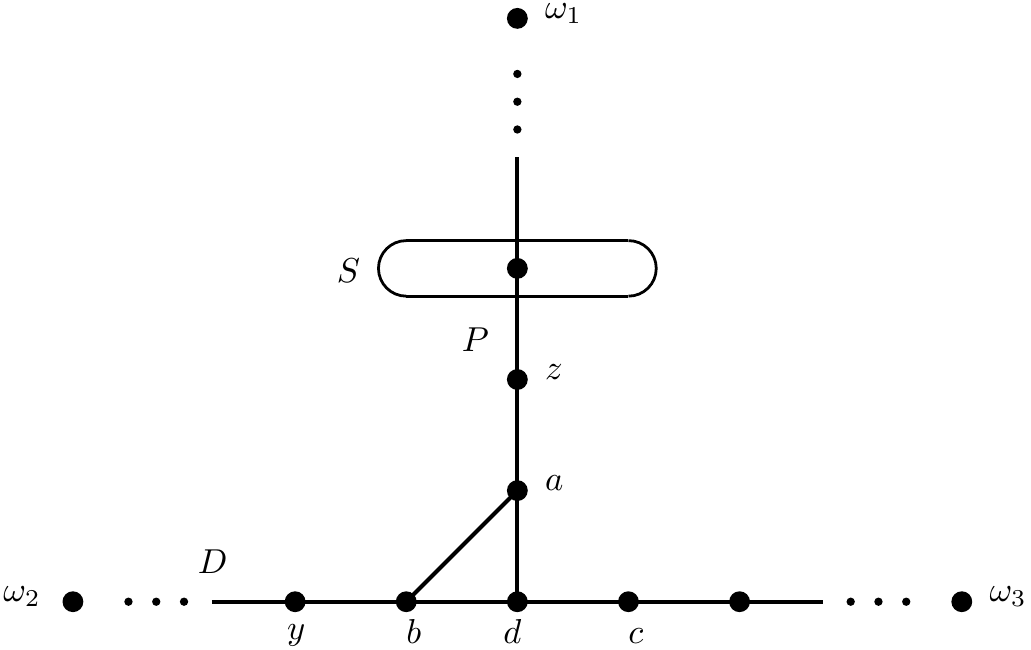}
\caption{The graph $G[a,b,c,d,y,z]$ forms an induced net.}
\label{KeineDreiEnden}
\end{figure}

We furthermore note that $z$ is not adjacent to any vertex in $\{y, b, d, c\}$ since this would yield an $S$-$D$ path shorter than $P$. 
Since $D$ was geodetic, $c$ cannot be adjacent to $y$ or~$b$.
It remains to show that $ay \notin E(G)$.
Suppose for a contradiction that $ay \in E(G)$, then $G[a,d,y,z]$ is an induced claw, contradicting our assumption.
Hence, we proved that $G[a,b,c,d,y,z]$ is an induced net, which contradicts our assumption on $G$.
\end{proof}

Note that the proof of Lemma~\ref{lem:2ends:net} shows also the following.

\begin{corollary}
Let $G$ be a locally finite, connected claw-free and net-free graph with two ends.
Let $D$ be a geodesic double ray containing rays to the two ends of $G$.
Then every vertex of $G$ has distance at most $1$ from $D$. \qed
\end{corollary}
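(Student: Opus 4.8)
The plan is to reuse, essentially verbatim, the net-construction from the proof of Lemma~\ref{lem:2ends:net}, but to replace the role of the finite separator $S$ by a far-away vertex. Fix an orientation of the double ray $D$, with the two directions leading to the two ends of $G$; call them $\omega_2$ and $\omega_3$. I would argue by contradiction: suppose some vertex lies at distance at least $2$ from $D$. Taking a shortest path from such a vertex to $D$ and inspecting its last two edges, local finiteness and connectedness yield a vertex $z$ at distance exactly $2$ from $D$, a neighbour $a$ of $z$ at distance exactly $1$ (so $a\notin V(D)$), and a vertex $d\in V(D)$ adjacent to $a$. In particular $z$ is adjacent to no vertex of $D$.

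The key step, and the place where the proof differs from that of the lemma, is the extremal choice. In the lemma this was the $S$--$D$ path meeting $D$ closest to $\omega_3$, which is available because $S$ is finite; here there is no such $S$. Instead I would exploit that $a$ itself has finite degree: among the (finitely many) vertices of $D$ adjacent to $a$, let $d$ be the one lying farthest toward $\omega_3$. Writing $c$ and $b$ for the $D$-neighbours of $d$ in the $\omega_3$- and $\omega_2$-directions respectively, this choice guarantees $ac\notin E(G)$, since $c$ lies strictly closer to $\omega_3$ than $d$ along $D$.

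From here the argument is identical to the lemma. Since $D$ is geodetic, $bc\notin E(G)$, so $G[a,b,c,d]$ being not an induced claw forces $ab\in E(G)$; thus $\{a,b,d\}$ is a triangle. Letting $y$ be the $D$-neighbour of $b$ distinct from $d$, I would check that $G[a,b,c,d,y,z]$ is an induced net with triangle $\{a,b,d\}$ and pendants $c,y,z$ at $d,b,a$: the non-edges among $\{c,y,z\}$ and between these and the non-incident triangle vertices all follow because $z$ is at distance $2$ from $D$ (hence adjacent to none of $b,c,d,y$), because $D$ is geodetic (ruling out $bc$, $cy$, $dy$), and because of the extremal choice (ruling out $ac$); the remaining non-edge $ay$ is obtained exactly as in the lemma, since $ay\in E(G)$ would make $G[a,d,y,z]$ an induced claw. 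This contradicts net-freeness, completing the proof.

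I expect the only real obstacle to be the extremal choice just described: the lemma's selection of a path closest to $\omega_3$ relied on the finiteness of $S$, which is unavailable in the corollary's two-ended setting, and the substitute is to extremize over the finite neighbourhood of the single distance-$1$ vertex $a$ rather than over a finite separator. Once that is in place, every edge and non-edge needed for the net is forced by geodeticity of $D$ and by $z$ having distance $2$ from $D$, exactly as in Lemma~\ref{lem:2ends:net}.
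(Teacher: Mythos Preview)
Your proof is correct and follows essentially the same net-construction as the paper intends when it says the corollary is ``shown by the proof of Lemma~\ref{lem:2ends:net}''. The one point worth noting is that your perceived obstacle is not really there: the lemma's argument from ``Now consider all shortest $S$--$D$-paths'' onward uses only that $S$ is finite, non-empty, and at distance at least~$2$ from~$D$, so in the corollary's setting one may simply take $S=\{v\}$ for any vertex $v$ with $d(v,D)\ge 2$ and run the lemma's proof verbatim --- the endpoints on~$D$ of shortest $\{v\}$--$D$ paths form a finite set by local finiteness, so one closest to~$\omega_3$ exists. Your alternative extremal choice (picking $d$ farthest toward~$\omega_3$ among the $D$-neighbours of a fixed distance-$1$ vertex~$a$) is equally valid and perhaps a touch more direct, but it is a cosmetic variation rather than a new idea needed to close a gap.
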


Now let us deduce two corollaries with respect to bulls.

\begin{corollary}\label{lem:2ends:bull}
Every locally finite, connected claw-free and bull-free graph has at most two ends.
\end{corollary}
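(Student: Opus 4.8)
The plan is to reduce this statement to Lemma~\ref{lem:2ends:net}, which already bounds by two the number of ends of a locally finite connected claw-free and \emph{net}-free graph. Since the hypothesis here is claw-free and \emph{bull}-free, it suffices to show that every bull-free graph is automatically net-free; the local finiteness and connectivity then carry over verbatim, so Lemma~\ref{lem:2ends:net} applies directly. Note that this reduction is purely combinatorial and uses nothing about the cardinality or the degrees of the graph.

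The one substantive point is therefore the implication ``bull-free $\Rightarrow$ net-free'', which I would establish in its contrapositive form: every induced net contains an induced bull. Concretely, I would label the net as a triangle on $u_1, u_2, u_3$ together with a pendant vertex $v_i$ adjacent only to $u_i$ for $i \in \{1,2,3\}$, and then delete one pendant, say $v_3$. I would check that the five vertices $u_1, u_2, u_3, v_1, v_2$ induce a bull inside the net: the triangle $u_1u_2u_3$ survives, $v_1$ keeps its unique neighbour $u_1$ and $v_2$ keeps its unique neighbour $u_2$, while $u_3$ retains exactly its two triangle-neighbours. Matching this against the bull from the proof of Theorem~\ref{bullreduct}, the horns are $v_1$ and $v_2$, the degree-$2$ vertex is $u_3$, and $u_1, u_2$ play the roles of $a_1, a_2$.

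Since an induced subgraph of an induced subgraph is again induced, whenever $G$ contains an induced net it also contains an induced bull; taking the contrapositive shows that a bull-free graph is net-free. A locally finite connected claw-free and bull-free graph is thus in particular a locally finite connected claw-free and net-free graph, so Lemma~\ref{lem:2ends:net} yields that it has at most two ends, completing the argument.

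I do not expect any real obstacle here: the only care needed is to verify that the five chosen vertices induce \emph{exactly} a bull—in particular that no extra edges among $u_1, u_2, v_1, v_2$ appear, which is guaranteed because $v_1$ and $v_2$ are pendants of the net—and to invoke transitivity of the induced-subgraph relation. Indeed, the paper already remarks that ``bull-free graphs are especially net-free,'' so this corollary is immediate once that remark is justified as above.
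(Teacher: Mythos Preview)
Your proposal is correct and matches the paper's own argument exactly: the paper simply notes that every net contains an induced bull and then invokes Lemma~\ref{lem:2ends:net}. Your explicit verification that deleting one pendant from a net leaves an induced bull is precisely the content of that remark.
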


\begin{proof}
Since every net contains an induced bull, the statement follows immediately from Lemma~\ref{lem:2ends:net}.
\end{proof}

\begin{corollary}\label{lem:2ends:bull-phi}
Let $G$ be a locally finite, connected claw-free.
If the horns of every induced bull $B$ have a common neighbour in $G-B$, then $G$ has at most two ends.
\end{corollary}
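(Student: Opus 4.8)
The plan is to derive this statement as an immediate consequence of Theorem~\ref{bullreduct} and Corollary~\ref{lem:2ends:bull}, which together already encapsulate all the combinatorial work. Since the notion of an end is only meaningful for infinite graphs, I would first dispense with the trivial case: if $G$ is finite, then $\Omega(G) = \emptyset$ and the conclusion holds vacuously. So I may assume $G$ is infinite.

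Under the hypothesis that $G$ is a locally finite connected claw-free graph in which the horns of every induced bull $B$ have a common neighbour in $G-B$, Theorem~\ref{bullreduct} applies verbatim and yields that $G$ is in fact bull-free. This is precisely the step where the structural reduction is invoked, and it is Theorem~\ref{bullreduct} that carries the genuine difficulty of the argument through its exhaustive case analysis on where a distance increasing ray may start.

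Having upgraded the hypothesis to the stronger property of being bull-free, I would then invoke Corollary~\ref{lem:2ends:bull}, which asserts that every locally finite connected claw-free and bull-free graph has at most two ends. Applying it to $G$ gives $|\Omega(G)| \leq 2$, as required. Since both invoked results are already established earlier in this section, there is no remaining obstacle: the only content of the present corollary is the observation that the relaxed bull condition collapses to genuine bull-freeness in the locally finite setting, so that the two-ends bound transfers directly.
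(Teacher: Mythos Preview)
Your proof is correct and follows essentially the same route as the paper: apply Theorem~\ref{bullreduct} to conclude that $G$ is bull-free, then invoke Corollary~\ref{lem:2ends:bull}. Your explicit handling of the finite case is a reasonable addition, since Theorem~\ref{bullreduct} is stated only for infinite graphs, but otherwise the arguments coincide.
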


\begin{proof}
Since every graph as in the premise of the statement is already bull-free by Theorem~\ref{bullreduct}, the statement follows immediately from Corollary~\ref{lem:2ends:bull}.
\end{proof}

While Lemma~\ref{lem:2ends:net} and its corollaries limit the variety of possible graphs we are considering in terms of the number they can have, we shall now show that these classes are non-trivial.
Before we make explicit constructions, we need to state a definition of which we want to make use of.

Given a graph $G$ and some $k \in \mathbb{N}$, we call a graph $G'$ a \emph{$k$-blow-up} of $G$ if we obtain $G'$ from $G$ by replacing each vertex of $G$ by a clique of size $k$, where two vertices of $G'$ are adjacent if and only if they are either both from a common such clique, or the original corresponding vertices were adjacent in $G$.

\begin{example}
For some $k\in \mathbb{N}$ consider the $k$-blow-up of a ray, yielding a graph with one end, and the $k$-blow-up of a double ray for an example with two ends.
Next we check that these graphs are claw-free.
Suppose for a contradiction there exists an induced claw $C$.
Then the vertex $c$ of degree $3$ in $C$ is contained in a clique corresponding to a vertex $v$ of the ray (or double ray).
Now, however, two non-adjacent neighbours of $c$ could only lie in two cliques which correspond to two neighbours of $v$ vertex of the ray (or double ray).
The third vertex of degree $1$ in $C$ cannot be contained in any clique without causing a contradiction to $C$ being an induced subgraph.

These graphs are also bull-free and, therefore, net-free as well.
Suppose there exists an induced bull $B$, consisting of a triangle $K = G[a_1,a_2,z]$ and the two horns $b_1$ and $b_2$ where $a_ib_i \in E(B)$ for every $i \in \{ 1, 2 \}$.
First note that one edge $xy$ of $K$ has to lie in clique corresponding to a vertex of the ray (or double ray).
By the structure of a bull, $x$ or $y$ is adjacent to a horn $h$ of $B$.
Then, however, by the structure of the whole graph, $h$ is must be adjacent to both, $x$ and $y$, contradicting that $B$ is an induced subgraph.
\end{example}

Finally, let us mention that Lemma~\ref{lem:2ends:net} only holds for locally finite graphs.
In the following example we state non-locally finite, but still countable graphs which are claw-free and net-free, but have $k \geq 3$ or countably many ends:

\begin{example}
Fix for every $i \in \mathbb{Z}$ a clique $K^i_{\aleph_0}$ of size $\aleph_0$ such that the vertex sets of all these cliques and the set $\mathbb{Z}$ are pairwise disjoint.
Furthermore, fix two distinct vertices $i^+$ and $i^-$ in each $K^i_{\aleph_0}$.
For any set $I \subseteq \mathbb{Z}$ we now define the graph $D_I$ as follows.
Set $V(D_I) = (\mathbb{Z} \setminus I) \cup \bigcup_{i \in I} V(K^i_{\aleph_0})$.
If $i, (i+1) \in \mathbb{Z} \setminus I$, set $i(i+1) \in E(D_I)$.
For $i \in I$ and $(i+1) \notin I$, set $i^+(i+1) \in E(D_I)$, and for $i \in I$ and $i-1 \notin I$, set $i^-(i-1) \in E(D_I)$.
Finally, for $i, (i+1) \in I$ set $i^+i^- \in E(D_I)$.
This completes the definition of $D_I$.

It is easy to see that that $D_I$ has precisely $|I| + 2 \in \mathbb{N} \cup \{ \infty \}$ many ends.
Now suppose $D_I$ contains a claw or a net, call it $H$.
Note that each vertex of $H$ with degree $3$ in $H$ has to lie in some $K^i_{\aleph_0}$ with $i \in I$.
From this point on it is easy to see that no matter where the vertices of degree $1$ in $H$ are located in $D_I$, the graph $H$ cannot be an induced subgraph of $D_I$.
\end{example}

\section{Hamiltonicity results}\label{sec:Shep}

The main goal of this section is to extend Theorem~\ref{thm:finShep2.9} to locally finite graphs.
Let us briefly restate the theorem here.

\finShep*

Next we prove Theorem~\ref{infShep2.9.ii}, which is an extension of statement~(\ref{thm:finShep2.9.2}) of Theorem~\ref{thm:finShep2.9} to locally finite graphs.
Our key tool to prove this result is the characterisation of infinite locally finite claw-free and net-free graphs in terms of distance-$2$-completeness, which is Theorem~\ref{thm:infShep2.1}.

\infShepTwo*

\begin{proof}
Let $G$ be a graph as in the statement of this theorem.
By statement~(\ref{thm:finShep2.9.2}) of Theorem~\ref{thm:finShep2.9} we can assume $G$ to be infinite.
By Lemma~\ref{lem:2ends:net} we know that $G$ has at most two ends.
We only write the proof of this theorem for the case that $G$ has precisely two ends, say $\omega_1$ and $\omega_2$.
The case that $G$ has only one end works analogously, but is slightly easier.

Let $S \subseteq V(G)$ be any finite minimal vertex separator of $G$, which exists since $G$ is locally finite.
Furthermore, let us fix some $v \in S$.
By Proposition~\ref{2comp} we know that $G-S$ has exactly two components, call them $L$ and $R$.
By Theorem~\ref{thm:infShep2.1} we know that the graph $G - (S \setminus \{v \})$ is distance-$2$-complete centered at $v$.
Within the graph $G - (S \setminus \{v \})$ let $L_i$ and $R_i$ denote the $i$-th distance classes of $v$ in the components $L$ and $R$, respectively.
Let $\ell \in \mathbb{N}$ be the maximum number with the property that $S$ has a neighbour in $L_{\ell}$ or $R_{\ell}$.
Now we define a hierarchy of subgraphs starting with
\[G_0 := G  \Big[  S \cup \bigcup_{i = 1}^{{\ell}} L_i \cup \bigcup_{i = 1}^{{\ell}} R_i  \Big] .\]

Furthermore. we define: 
\[G_{i+1} := G[V(G_i) \cup L_{i+1} \cup R_{i+1}].\]

Note that $G_0 = G_i$ for every $i \in \mathbb{N}$ with $i \leq \ell$, but $G_i \subsetneqq G_{i+1}$ for every $i \in \mathbb{N}$ with $i \geq \ell$ since $G$ is infinite.

Since each $G_i$ is an induced subgraph of $G$, it is claw-free and net-free as well.
Furthermore, each $G_i$ is finite since $G$ is locally finite.
Using that each subgraph $G[L_i]$ and $G[R_i]$ is complete because $G - (S \setminus \{v \})$ is distance-$2$-complete centered at $v$, it follows easily from the definition of $G_0$ and from $G$ being $2$-connected that each $G_i$ is $2$-connected as well.
Hence we get from statement~(\ref{thm:finShep2.9.2}) of Theorem~\ref{thm:finShep2.9} that each $G_i$ contains a Hamilton cycle.

We now prove that there is a Hamilton cycle $C'_n$ in $G_n$ for every $n > \ell$ such that $|E(C'_n) \cap \delta(R_n)| = 2$ and $|E(C'_n) \cap \delta(L_n)| = 2$ holds.
We start by fixing an arbitrary Hamilton cycle $C_n$ of $G_n$ and fix an orientation of $C_n$.
Starting from $v$ this cycle has to meet $R_{n}$ at some point the first time, say in vertex $w_1$, via the edge $v_1 w_1$ with $v_1 \in R_{n-1}$ (cf.  Figure~\ref{Bild31}).
Beginning from $v$, say the first time $C_n$ leaves $R_n$ happens at vertex $w_2$ via the edge $w_2 v_2$ for some $v_2 \in R_{n-1}$.
To define the desired Hamilton cycle $C'_n$ we follow $C_n$ from $v$ till $w_1$ and collect all vertices from $R_n$ ending in $w_2$, which we can do since each $G[R_n]$ complete.
We now return to $v_2$ via the edge $w_2 v_2$.
Next we follow $C_n$ but whenever the cycle goes from $R_{n-1}$ to $R_n$, say via some edge $v_k  w_k$, and comes back from $R_{n}$ to $R_{n-1}$ the next time, say via an edge $v_{k+1} w_{k+1}$, we replace this segment of the cycle by the edge $v_k v_{k+1}$, which exists since $G[R_{n-1}]$ is complete.

\begin{figure}[htbp]
\centering
\includegraphics[width=5cm]{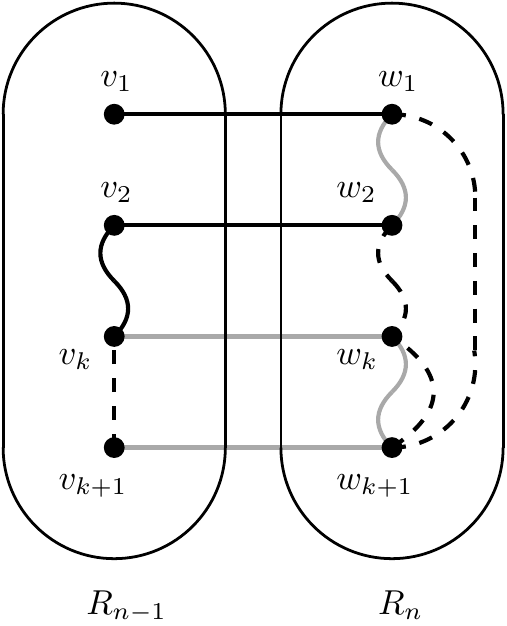}
\caption{Modified cycle $C'_n$ meeting $E(R_{n-1}, R_n)$ only twice by replacing grey edges of $C_n$ by black dashed edges.}
\label{Bild31}
\end{figure}

Similarly, we can modify the cycle to incorporate $L_{n}$ in this manner. Thus we have found the desired Hamilton cycle $C'_n$ in $G_{n}$.
Note that in this process we only altered the initial Hamilton cycle $C_n$ at $L_n$, $L_{n-1}$, $R_{n-1}$ and $R_n$. 

We now show that for any $n > \ell$ the cycle $C'_n$ can be extended without altering edges in $G_{n-1}$ to a Hamilton cycle $D_{n+1}$ of $G_{n+1}$ such that $|\delta(R_n)\cap E(D_{n+1})| = 4$ and $|\delta(R_{n+1})\cap E(D_{n+1})| = 2$ holds as well as analogue statements for $L_n$ and $L_{n+1}$.
We shall give the argument only for $R_{n+1}$ as the modification for $L_{n+1}$ works analogously.
We know that there are at least two independent edges $a_1 b_1, a_2 b_2 \in E(R_n, R_{n+1})$ where $a_i \in R_n$ and $b_i \in R_{n+1}$ for any $i \in \{ 1, 2 \}$, since $G_{n+1}$ is $2$-connected.
If $|R_n| = 2$, we can easily use these two edges to get the desired extension $D_{n+1}$ of $C'_n$.
So we may assume that $R_{n}$ has at least $3$ vertices.
Say without loss of generality that $a_1$ lies before $a_2$ on $C'_n$.
Furthermore, say that $C'_n$ meets $R_n$ the first time (starting from $v$) in $w_1$ via the edge $v_1 w_1$ with $v_1 \in R_{n-1}$ and leaves $R_n$ the last time from $w_2$ via the edge $w_2 v_2$ where $v_2 \in R_{n-1}$. 
We now have to consider two cases.

\setcounter{case}{0}
\begin{case}
$\vert \{ w_1, a_1, a_2, w_2\}\vert \geq 3$.
\end{case}

Without loss of generality let $a_1 \neq w_1$.
In this case we follow $C'_n$ until $w_1$, then collect all vertices from $R_n$ but $w_2$ and $a_2$ such that we end in $a_1$ (see Figure~\ref{fig:case_1}), which we can do since $G[R_{n}]$ is a clique.
Next we use the edge $a_1b_1$, collect all vertices in $R_{n+1}$ while ending in $b_2$, return to $R_n$ via the edge $b_2a_2$.
If $a_2 = w_2$, we can immediately follow $C'_n$ to close a cycle.
Otherwise we use the edge $a_2w_2$ and then proceed with $C'_n$ to to close a cycle.
Doing the same with $L_{n+1}$ yields the desired $D_{n+1}$.
This completes the argument in Case~1.

\begin{figure}[htbp]
\centering
\includegraphics[width=5cm]{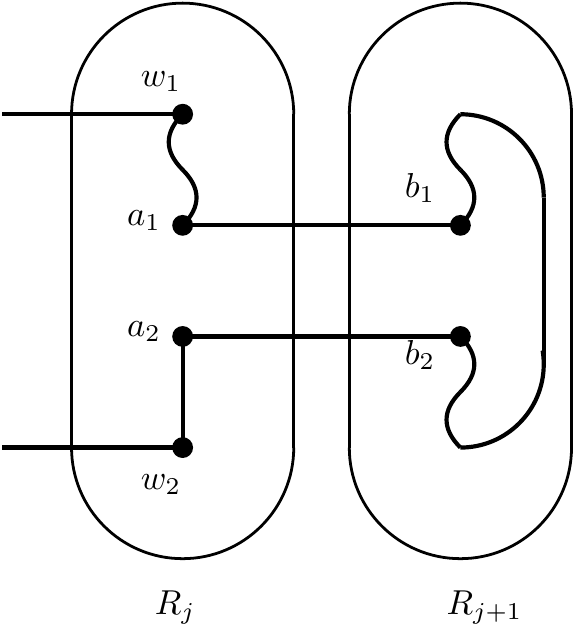}
\caption{The situation in Case~1.}
\label{fig:case_1}
\end{figure}

\begin{case}
$a_1 = w_1$ and $a_2 = w_2$.
\end{case}

Let $x$ be an arbitrary vertex from $R_n - \{ a_1, b_1\}$.
Since $G[v_1 , w_1 , x , b_1]$ is not an induced claw, one of the edges $v_1 x$ or $x b_1$ exists (cf.~Figure~\ref{fig:case_2}).
If $v_1 x$ exists, we can operate as in Case 1 by just switching the roles of $x$ and $w_1$.
Should $x b_1$ exist, we can proceed as in Case 1 as well, this time by switching the roles of $x$ and $a_1$.
This completes the argument for Case~2.

\begin{figure}[htbp]
\centering
\includegraphics[width=6cm]{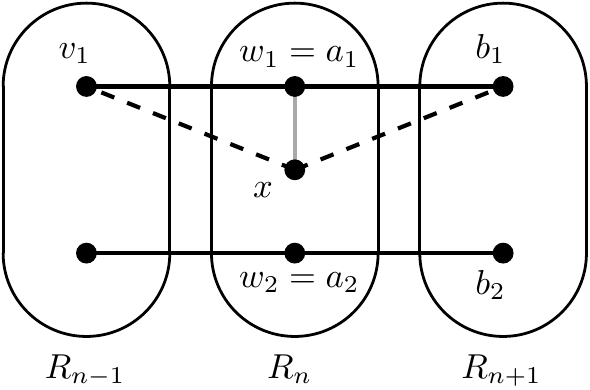}
\caption{The situation in Case~2: the dashed edges prevent the graph $G[v_1 , w_1 , x , b_1]$ from being an induced claw.}
\label{fig:case_2}
\end{figure}

This shows that we can always extend $C'_n$ to the desired cycle $D_{n+1}$.
Since $D_{n+1}$ is also a valid candidate for $C'_{n+1}$ and $D_{n+1} \cap G_{n-1} =  C'_n \cap G_{n-1}$, we can inductively extend $C'_n$ through all $R_m$ and $L_m$ with $m > n$ and obtain a well-defined subgraph $C$ as limit object via its edge set:
\[ E(C) := \left \lbrace e \in E(G) \; ; \; \exists k \in \mathbb{N} \; \colon \; e \in \bigcap^{\infty}_{i \geq k} E(D_{i}) \right \rbrace .\]

The rest of this proof consist of verifying that $\overline(C)$ is a Hamilton circle of $G$.
By the definition of $C$ we immediately get that every vertex of $G$ is contained in $C$.
It remains to check that $\overline{C}$ is a circle in $\vert G \vert$.
From the definition of all the $D_i$ and $C$ we immediately get that every vertex of $G$ has degree $2$ in $C$.
By Lemma~\ref{circ} it remains to prove that $C$ is topologically connected and that every end of $G$ has degree $2$ in~$\overline{C}$.

In order to prove that $\overline{C}$ is topologically connected, it is enough to show that $C$ meets every finite cut of $G$ by Lemma~\ref{top-con}.
This, however, holds since each finite cut $F$ of $G$ is eventually contained in $G_m$ for all $m > m_0$ where $m_0$ is some sufficiently large integer, which means that each Hamilton cycle $D_m$ of $G_m$ for $m > m_0$ meets $F$ in the same set of edges and, hence, so does $C$.
We even get that each finite cut $F$ of $G$ is met in an even number of edges by $C$, since the intersection of a cycle and a cut is always even.

Since being topologically connected and being arc-connected is equivalent for closed subspaces of $|G|$ by Lemma~\ref{arc_conn}, we know that for every end $\omega$ of $G$ there exists at least one arc in $\overline{C}$ with $\omega$ as its endpoint.
So each end of $G$ already has degree at least $1$ in $\overline{C}$.

Next let us prove that each end of $G$ has degree at most $2$ in $\overline{C}$.
For this let us define $R_{\geq n} = \bigcup^{\infty}_{i=n}R_i$ and $L_{\geq n} = \bigcup^{\infty}_{i=n}L_i$ for every $n > \ell$.
Since each $R_n$ and each $L_n$ separates the two ends of $G$ if $n > \ell$, say every $\omega_1$-ray has a tail in $R_{\geq n}$ and every $\omega_2$-ray has a tail in $L_{\geq n}$ for every $n > \ell$.
By definition of $C$ we get that ${|\delta(R_{\geq n}) \cap E(C)| = |E(R_{n-1}, R_{n}) \cap E(C)| = 2}$ holds for every $n > \ell$.
Due to Lemma~\ref{jumping-arc} we now know that $\overline{C}$ cannot contain three vertex disjoint arcs all of which have $\omega_1$ as their endpoint.
So $\omega_1$ has degree at most $2$ in $\overline{C}$.
An analogue argument shows that $\omega_2$ has degree at most $2$ in $\overline{C}$ as well.

Finally, we prove that each end of $G$ has degree at least $2$ in $\overline{C}$.
Theorem~\ref{cycspace} tells us that each end of $G$ has an even degree in $\overline{C}$ if $C$ meets every finite cut of $G$ in a even number of edges.
This holds as already proven above.
So we can conclude that both ends of $G$ have degree precisely $2$ in $\overline{C}$, which completes the proof that $\overline{C}$ is a Hamilton circle~of~$G$.
\end{proof}

We move on by proving Theorem~\ref{infShep2.9.iii}, which is an extension of statement~(\ref{thm:finShep2.9.3}) of Theorem~\ref{thm:finShep2.9} to locally finite graphs.
Recall that we call a locally finite graph $G$ \emph{topologically $k$-leaf-connected} where $k \in \mathbb{N}$ if $|V(G)| > k$ and given any set $S \subseteq V(G) \cup \Omega(G)$ with $|S| = k$, then $G$ has a topological spanning tree $\overline{T}$ whose set of leaves is precisely $S$.

\infShepThree*

\begin{proof}
By statement~(\ref{thm:finShep2.9.3}) of Theorem~\ref{thm:finShep2.9} we might assume for both implications that $G$ is infinite.
Let us first assume that $G$ is an infinite, but locally finite graph that is topologically $k$-leaf-connected.
We show that $G$ is $(k+1)$-connected.
Assume for a contradiction that $G$ has a vertex separator $S \subseteq V(G)$ of size at most $k$.
Let $S' \supseteq S$ be a superset of $S$ such that $S'$ still separates $G$, which is possible since $G$ is infinite, and $|S'| = k$.
By the topologically $k$-leaf-connectedness there exists a subgraph $T$ of $G$ such that $\overline{T}$ is a topological spanning tree of $G$, whose set of leaves is exactly $S'$.
Since $\overline{T}$ contains vertices from two components of $G-S'$, it must pass $S$ by Lemma~\ref{jumping-arc}, yielding a vertex of from $S$ that has degree at least $2$ in $T$; a contradiction.

Suppose for the other implication that $G$ is an infinite, but locally finite claw-free and net-free graph which is $(k+1)$-connected for $k \geq 2$.
We show that $G$ is topologically $k$-leaf-connected.
By Theorem~\ref{thm:infShep2.1} we know that for every finite minimal vertex separator $S \subseteq V(G)$ of $G$ and every $v \in S$, the graph $G - (S \setminus \{ v\})$ is distance-$2$-complete centered at $v$.
So let us fix such an $S$ and some $v \in S$.
Let $G_i, R, R_i, L$ and $L_i$ be defined as in the proof of Theorem~\ref{infShep2.9.ii}. 

Let us fix some $B = \{l_1, \ldots, l_k \} \subseteq V(G) \cup \Omega(G)$  for the rest of the proof.
We have to show that a topological spanning tree of $G$ exists whose set of leaves is exactly $B$.
By Theorem~\ref{thm:infShep2.1} we know that $G$ has at most two ends.
We shall give the proof only in the case that $G$ has precisely one end.
For the case that $G$ contains two ends the argument can easily be adapted.
A consequence of assuming $G$ to have only one end is that $L$ or $R$ is finite, say $L$.
We shall distinguish two cases, namely whether $B$ contains one or the two ends of $G$ or not.

First let us assume that $B$ contains no ends of $G$.
Let $\ell \in \mathbb{N}$ be sufficiently large such that $B, L$ and a finite connected subgraph in $R$ containing $N(S) \cap R$ is contained in $G_{\ell-3}$.
Similar to our proof of Theorem~\ref{infShep2.9.ii} we now show that there exists a finite spanning tree $T_{\ell + 1}$ of $G_{\ell + 1}$ with precisely $B$ as its set of leaves such $|\delta(R_{\ell + 1})\cap E(T_{\ell+1})| = 2$ holds in~$G_{\ell + 1}$.

To prove this, we first verify that $G_{\ell + 1}$ is also $(k+1)$-connected.
Otherwise, there would exist a separator $S_k$ of size at most $k$, separating two vertices $x$ and $y$ in $G_{\ell + 1}$, but not in~$G$.
Hence, there exists an $x$--$y$-path $P$ in $G$ disjoint to $S_k$.
Since $P$ does not exist in $G_{\ell + 1}$, it must pass through $R_{\ell + 1}$ to $R_{\ell + 2}$.
By shorten $P$ on $G[R_{\ell + 1}]$, which is a clique, we obtain an $x$--$y$-path in $G_{\ell + 1}$ which is disjoint to $S_k$; a contradiction.

Since $G_{\ell + 1}$ is $(k+1)$-connected and, as induced subgraph of $G$, also claw-free and net-free, there exists a spanning tree $T'_{\ell + 1}$ of $G_{\ell + 1}$ whose set of leaves is $B$ by statement~(\ref{thm:finShep2.9.3}) of Theorem~\ref{thm:finShep2.9}.
Next we modify this tree to obtain the desired tree $T_{\ell + 1}$.
First we root $T'_{\ell + 1}$ in $v$ and orient its edges away from the root.
Now we get $T_{\ell + 1}$ from $T'_{\ell + 1}$ by shortening all but one of the directed paths $P$ starting and ending in $R_{\ell}$ and otherwise only using vertices from $R_{\ell + 1}$ by an edge from the start vertex to the endvertex of $P$, which exists since $G_{\ell + 1}$ is a clique.
Note that if we do not need to do this replacement, then $T'_{\ell + 1}$ is already as desired.
We now modify the remaining of such paths in $R_{\ell + 1}$ to one containing all vertices of $R_{\ell + 1}$, but with the same start and endvertex.
The resulting graph is our $T_{\ell + 1}$, which is indeed a tree since it is connected and every cycle in $T_{\ell + 1}$ would yield a cycle in $T'_{\ell + 1}$ either directly or by replacing edges from $E(T_{\ell + 1}) \setminus E(T'_{\ell + 1})$ by the corresponding paths in $R_{\ell + 1}$.

We can now extend $T_{\ell + 1}$ to a topological spanning tree $\overline{T}$ of $G$, where $T$ is a corresponding subgraph of $G$.
For this we extend one branch of $T_{\ell + 1}$ starting in $v$ and ending in some leaf $l_i$ of $T_{\ell + 1}$ that contains edges from~$G[R_{\ell + 1}]$.
We modify such a branch at an edge in $G[R_{\ell + 1}]$ to an arc via the end of $G$ starting in $v$, ending in $l_i$ and containing all remaining vertices of $G$, which are precisely those in $\bigcup_{i = \ell + 2}^{\infty} R_i$.
This modification can be done similarly to our extension of the Hamilton cycles in the proof of Theorem~\ref{infShep2.9.ii}.
To see that the resulting standard subspace $\overline{T}$ of $G$ is indeed a topological spanning tree of $G$, we have to check that it is topologically connected and does not contain a circle from $|G|$.
Similarly as in the proof of Theorem~\ref{infShep2.9.ii}, it is easy to check that $T$ intersects every finite cut of~$G$.
Hence, Lemma~\ref{top-con} implies that $\overline{T}$ is topologically connected.
To see that $\overline{T}$ dos not contain any circle from $|G|$, note that any such circle would also induce a cycle in some $T_{n}$ for sufficiently large $n \in \mathbb{N}$, contradicting that $T_{n}$ is a tree.
This completes the proof for the first case.

Now let us assume that $B = \{l_1, \ldots, l_{k-1}, \omega \}$ contains the end $\omega$ of $G$.
As in the first case choose a sufficiently large $\ell \in \mathbb{N}$ such that $B - \{\omega\}$, $L$ and a finite connected subgraph in $R$ containing $N(S) \cap R$ is contained in $G_{\ell - 3}$.
Let $w$ be a vertex in $R_{\ell}$ with a neighbour in~$R_{\ell + 1}$.
By statement~(\ref{thm:finShep2.9.3}) of Theorem~\ref{thm:finShep2.9} there is a spanning tree $T_{\ell}$ in $G_{\ell}$ with $B - \{\omega\} \cup \{w \}$ as its set of leaves.
Let us pick $v \in S$ as the root of $T_{\ell}$.
But now we can extend the branch ending in $w$ by an $\omega$-ray $R$ starting in $w$ with $V(R) = \{w\} \cup \bigcup_{i = \ell + 1}^{\infty} R_i$.
Similar as in the first case it is easy to verify that the closure $\overline{T}$ of the resulting subgraph $T$ yields a topological spanning tree of $G$ whose set of leaves is precisely $B$.
\end{proof}

Finally, we prove Theorem~\ref{infShep2.9.i}, which forms an extension of statement~(\ref{thm:finShep2.9.1}) of Theorem~\ref{thm:finShep2.9} to locally finite graphs.

\infShepOne*

\begin{proof}
We shall distinguish three cases with respect to the connectivity of $G$.
Suppose first that $G$ is not $2$-connected.
Then let $v$ be a cut vertex of $G$.
By Theorem~\ref{thm:infShep2.1} we know that $G$ is distance-$2$-complete centered at $v$.
So there are precisely two components of $G-v$, both of which are infinite if $G$ has two ends, and just one of them is infinite in case $G$ has only one end.
Using the structure of distance-$2$-complete graphs we easily find either a spanning double ray if $G$ has two ends, or a spanning ray if $G$ has only one end.

For the second case let us assume that $G$ is $3$-connected.
By Theorem~\ref{infShep2.9.iii} we know that $G$ is $2$-leaf-connected.
So we can find for any two distinct $x, y \in V(G) \cup \Omega(G)$ a Hamilton arc of $G$ with $x$ and $y$ as endpoints.
In the case that $G$ has precisely two ends, we can find a Hamilton arc of $G$ with these ends as its endpoints.
Since $G$ has no further ends, we immediately get that this Hamilton arc induces a double ray in $G$.

So let us focus on the case that $G$ has just one end.
We fix some finite minimal vertex separator $S \subseteq V(G)$ and some $v \in S$.
Let $G_i, R, R_i, L$ be defined as in the proof of Theorem~\ref{infShep2.9.ii} and, without loss of generality, say that $L$ is finite. 
Furthermore, let $\ell \in \mathbb{N}$ be sufficiently large such that $L$ and a finite connected subgraph in $R$ containing $N(S) \cap R$ is contained in $G_{\ell-3}$.
Since $G_{\ell + 1}$ is also $3$-connected as argued within the proof of Theorem~\ref{infShep2.9.iii}, we can find a Hamilton path $P$ in $G_{\ell + 1}$ whose start vertex is some arbitrary $x \in V(G_{\ell + 1})$ and whose endvertex is some $y \in R_{\ell + 1}$ that has a neighbour in $R_{\ell + 2}$ in $G$.
Again by making use of $G$ being distance-$2$-complete, we can easily extend $P$ through all the $R_i$ for $i \geq \ell + 2$ yielding a spanning ray of $G$ that starts at $x$.
This completes the second case.

It remains to prove the statement under the assumption that $G$ is $2$-connected, but not $3$-connected.
Hence, there is a minimal separator $\{u, v\}$ of $G$ with $u$ and $v$ being distinct. 
By Theorem~\ref{thm:infShep2.1} we know that $G - u$ and $G - v$ are distance-$2$-complete centered at $v$ and~$u$, respectively.
Let $R$ and $L$ be the two components of $G - \{ u, v \}$.

Since $\{u, v\}$ is a minimal vertex separator of $G$, we know that $u$ has at least one neighbour in $R$ as well as in $L$.
Furthermore, any two neighbours of $u$ in $R$ (or $L$) must be adjacent due to the claw-freeness of $G$.
Hence, any two neighbours of $u$ in $R$ lie in some common distance class of $v$ within $R$ or in two successive distance classes of $v$ within $R$.
An analogue statement holds for neighbours in $L$.

Now suppose $u$ has two distinct neighbours $u_1$ and $u_2$ in $L$ or $R$, say $R$.
Using that $G-u$ is distance-$2$-complete centered at $v$, we can find a spanning ray or double ray in $G-u$, depending whether $G$ has only one or two ends, that uses the edge $u_1u_2$.
Similarly as before, we incorporate $u$ by replacing the edge $u_1u_2$ by the path $u_1uu_2$.

So we may also assume that $u$ has precisely one neighbour in each of $R$ and $L$.
Since $G$ has at least one end, one of $L$ or $R$ must be infinite, say $R$.
Since $R$ is infinite and $G$ is $2$-connected, $N(v) \cap R$ cannot consist of just one vertex.
Hence $v$ has at least two neighbours in $N(v) \cap R$.
By changing the roles of $u$ and $v$ we are now done due to our earlier observation for the case that $u$ has two neighbours in $R$.
\end{proof}

\section*{Acknowledgements}
Karl Heuer was supported by a postdoc fellowship of the German Academic Exchange Service (DAAD) and by the European Research Council (ERC) under the European Union's Horizon 2020 research and innovation programme (ERC consolidator grant DISTRUCT, agreement No.\ 648527).

Deniz Sarikaya is thankful for the financial and ideal support of the Studienstiftung des deutschen Volkes and the Claussen-Simon-Stiftung.

Furthermore, both authors would like to thank Max Pitz for a comprehensive feedback on an early draft of this paper.
Also they would like to thank Hendrik Niehaus and J.~Pascal Gollin for helpful comments on an early version of this article.

\begin{bibdiv}
\begin{biblist}

\bib{cycle}{article}{
   author={Bruhn, Henning},
   author={Stein, Maya},
   title={On end degrees and infinite cycles in locally finite graphs},
   journal={Combinatorica},
   volume={27},
   date={2007},
   number={3},
   pages={269--291},
   issn={0209-9683},
   review={\MR{2345811}},
   doi={10.1007/s00493-007-2149-0},
}

\bib{bruhn-HC}{article}{
   author={Bruhn, Henning},
   author={Yu, Xingxing},
   title={Hamilton Cycles in Planar Locally Finite Graphs},
   journal={SIAM J. Discrete Math.},
   volume={22},
   date={2008},
   number={4},
   pages={1381--1392},
   issn={0895-4801},
   review={\MR{2345811}},
   doi={10.1137/050631458},
}

\bib{Chan.2015}{article}{
   author={Chan, Tsz Lung},
   title={Contractible edges in 2-connected locally finite graphs},
   journal={Electron. J. Combin.},
   volume={22},
   date={2015},
   number={2},
   pages={Paper 2.47, 13},
   issn={1077-8926},
   review={\MR{3367290}},
   doi={10.37236/4414},
}

\bib{Diestel.Buch}{book}{
   author={Diestel, Reinhard},
   title={Graph theory},
   series={Graduate Texts in Mathematics},
   volume={173},
   edition={5},
   publisher={Springer, Berlin},
   date={2017},
   pages={xviii+428},
   isbn={978-3-662-53621-6},
   review={\MR{3644391}},
   doi={10.1007/978-3-662-53622-3},
}

\bib{Diestel.Arx}{article}{
	author={Diestel, Reinhard}, 
	title={Locally finite graphs with ends: a topological approach}, 
	date={2012},
	eprint={0912.4213v3},
	note={Post-publication manuscript},
}

\bib{Freud-Equi}{article}{
   author={Diestel, Reinhard},
   author={K\"{u}hn, Daniela},
   title={Graph-theoretical versus topological ends of graphs},
   note={Dedicated to Crispin St. J. A. Nash-Williams},
   journal={J. Combin. Theory Ser. B},
   volume={87},
   date={2003},
   number={1},
   pages={197--206},
   issn={0095-8956},
   review={\MR{1967888}},
   doi={10.1016/S0095-8956(02)00034-5},
}

\bib{path-cyc-tree}{article}{
   author={Diestel, Reinhard},
   author={K\"{u}hn, Daniela},
   title={Topological paths, cycles and spanning trees in infinite graphs},
   journal={European J. Combin.},
   volume={25},
   date={2004},
   number={6},
   pages={835--862},
   issn={0195-6698},
   review={\MR{2079902}},
   doi={10.1016/j.ejc.2003.01.002},
}

\bib{inf-cyc-1}{article}{
   author={Diestel, Reinhard},
   author={K\"{u}hn, Daniela},
   title={On infinite cycles I},
   journal={Combinatorica},
   volume={24},
   date={2004},
   number={1},
   pages={69--89},
   issn={1439-6912},
   review={\MR{2057684}},
   doi={10.1007/s00493-004-0005-z},
}

\bib{inf-cyc-2}{article}{
   author={Diestel, Reinhard},
   author={K\"{u}hn, Daniela},
   title={On infinite cycles II},
   journal={Combinatorica},
   volume={24},
   date={2004},
   number={1},
   pages={91--116},
   issn={1439-6912},
   review={\MR{2057685}},
   doi={10.1007/s00493-004-0006-y},
}

\bib{Duffus.et.al.1981}{article}{
   author={Duffus, D.},
   author={Jacobson, M. S.},
   author={Gould, R. J.},
   title={Forbidden subgraphs and the Hamiltonian theme},
   conference={
      title={The theory and applications of graphs},
      address={Kalamazoo, Mich.},
      date={1980},
   },
   book={
      publisher={Wiley, New York},
   },
   date={1981},
   pages={297--316},
   review={\MR{634535}},
}

\bib{Freud}{article}{
   author={Freudenthal, Hans},
   title={\"{U}ber die Enden topologischer R\"{a}ume und Gruppen},
   language={German},
   journal={Math. Z.},
   volume={33},
   date={1931},
   number={1},
   pages={692--713},
   issn={0025-5874},
   review={\MR{1545233}},
   doi={10.1007/BF01174375},
}

\bib{Georgakopoulos.2009}{article}{
   author={Georgakopoulos, Agelos},
   title={Infinite Hamilton cycles in squares of locally finite graphs},
   journal={Adv. Math.},
   volume={220},
   date={2009},
   number={3},
   pages={670--705},
   issn={0001-8708},
   review={\MR{2483226}},
   doi={10.1016/j.aim.2008.09.014},
}

\bib{Hamann.et.al.2016}{article}{
   author={Hamann, Matthias},
   author={Lehner, Florian},
   author={Pott, Julian},
   title={Extending cycles locally to Hamilton cycles},
   journal={Electron. J. Combin.},
   volume={23},
   date={2016},
   number={1},
   pages={Paper 1.49, 17},
   issn={1077-8926},
   review={\MR{3484754}},
   doi={10.37236/3960},
}

\bib{Heuer.2015}{article}{
   author={Heuer, Karl},
   title={A sufficient condition for Hamiltonicity in locally finite graphs},
   journal={European J. Combin.},
   volume={45},
   date={2015},
   pages={97--114},
   issn={0195-6698},
   review={\MR{3286624}},
   doi={10.1016/j.ejc.2014.08.025},
}

\bib{Heuer.2016}{article}{
   author={Heuer, Karl},
   title={A sufficient local degree condition for Hamiltonicity in locally
   finite claw-free graphs},
   journal={European J. Combin.},
   volume={55},
   date={2016},
   pages={82--99},
   issn={0195-6698},
   review={\MR{3474794}},
   doi={10.1016/j.ejc.2016.01.003},
}

\bib{Heuer.2018}{article}{
   author={Heuer, Karl},
   title={Hamiltonicity in locally finite graphs: two extensions and a
   counterexample},
   journal={Electron. J. Combin.},
   volume={25},
   date={2018},
   number={3},
   pages={Paper 3.13, 29},
   issn={1077-8926},
   review={\MR{3853865}},
   doi={10.37236/6773},
}

\bib{HC_sub_2}{article}{
   author={Heuer, Karl},
   author={Sarikaya, Deniz},
   title={Forcing Hamiltonicity in locally finite graphs via forbidden induced subgraphs II},
   date={2020},
   eprint={1902.06402},
   note={Preprint},
}

\bib{Lehner.2014}{article}{
   author={Lehner, Florian},
   title={On spanning tree packings of highly edge connected graphs},
   journal={J. Combin. Theory Ser. B},
   volume={105},
   date={2014},
   pages={93--126},
   issn={0095-8956},
   review={\MR{3171783}},
   doi={10.1016/j.jctb.2013.12.004},
}

\bib{Li-arx_1}{article}{
   author={Li, Binlong},
   title={Faithful subgraphs and Hamiltonian circles of infinite graphs},
   date={2019},
   eprint={1902.06402},
   note={Preprint},
}

\bib{Li-arx_2}{article}{
   author={Li, Binlong},
   title={Hamiltonicity of bi-power of bipartite graphs, for finite and infinite cases},
   date={2019},
   eprint={1902.06403},
   note={Preprint},
}

\bib{Pitz.2018}{article}{
   author={Pitz, Max F.},
   title={Hamilton cycles in infinite cubic graphs},
   journal={Electron. J. Combin.},
   volume={25},
   date={2018},
   number={3},
   pages={Paper 3.3, 11},
   issn={1077-8926},
   review={\MR{3829289}},
   doi={10.37236/7033},
}

\bib{Ryjavcek.1995}{article}{
   author={Ryj\'{a}\v{c}ek, Zden\v{e}k},
   title={Hamiltonicity in claw-free graphs through induced bulls},
   journal={Discrete Math.},
   volume={140},
   date={1995},
   number={1-3},
   pages={141--147},
   issn={0012-365X},
   review={\MR{1333716}},
   doi={10.1016/0012-365X(94)00292-Q},
}

\bib{Shepherd.1991}{article}{
   author={Shepherd, F. Bruce},
   title={Hamiltonicity in claw-free graphs},
   journal={J. Combin. Theory Ser. B},
   volume={53},
   date={1991},
   number={2},
   pages={173--194},
   issn={0095-8956},
   review={\MR{1129551}},
   doi={10.1016/0095-8956(91)90074-T},
}

\bib{Watkins.1986}{article}{
   author={Watkins, Mark E.},
   title={Infinite paths that contain only shortest paths},
   journal={J. Combin. Theory Ser. B},
   volume={41},
   date={1986},
   number={3},
   pages={341--355},
   issn={0095-8956},
   review={\MR{864581}},
   doi={10.1016/0095-8956(86)90055-9},
}

\end{biblist}
\end{bibdiv}

\end{document}